\theoremstyle{plain}
\newtheorem{theorem}{Theorem}[section]
\newtheorem{lemma}[theorem]{Lemma}
\newtheorem{coro}[theorem]{Corollary}
\theoremstyle{definition}
\newtheorem{definition}[theorem]{Definition}
\theoremstyle{remark}
\newtheorem{remark}[theorem]{Remark}
\numberwithin{equation}{section}
\newcommand{\abs}[1]{\lvert#1\rvert}
\newcommand{\dual}[2]{\langle\,#1,#2\rangle}
\newcommand{\Lr}[1]{\left(#1\right)}
\newcommand{\lr}[1]{\Bigl(#1\Bigr)}
\newcommand{\set}[2]{\left\{\,#1\,\mid\,#2\,\right\}}
\newcommand{\nm}[2]{\|\,#1\,\|_{#2}}
\newcommand{\hkto}{\hookrightarrow}
\newcommand\al{\alpha}
\newcommand\om{\omega}
\newcommand\R{\mathbb{R}}
\newcommand\md{\mathrm d}
\newcommand\dx{\md\,x}
\newcommand{\mc}[1]{\mathcal{#1}}
\newcommand{\mb}[1]{\mathbb{#1}}
\newcommand{\ms}[1]{\mathscr{#1}}
\newcommand{\mr}[1]{\mathrm{#1}}
\newcommand{\wh}[1]{\widehat{#1}}
\begin{document}
\title[Spectral Barron space for DNN]{Spectral Barron space for deep neural network approximation}
\author[Y. L. Liao\and P. B. Ming]{Yulei Liao \and Pingbing Ming}
\address{LSEC, Institute of Computational Mathematics and Scientific/Engineering Computing, AMSS, Chinese Academy of Sciences, Beijing 100190, China; School of Mathematical Sciences, University of Chinese Academy of Sciences, Beijing 100049, China}
\email{liaoyulei@lsec.cc.ac.cn, mpb@lsec.cc.ac.cn}
\thanks{We would like to thank Professor Renjin Jiang in Capital Normal University for the helpful discussion. We are also grateful to the anonymous referees for their constructive comments, which have greatly improve the quality of this work. This work was funded by National Natural Science Foundation of China through Grant No. 12371438.}
\keywords{Spectral Barron space; Deep neural network; Approximation theory}
\date{\today}
\subjclass[2020]{32C22, 32K05, 33C20, 41A25, 41A46, 42A38, 68T07}

\begin{abstract}
We prove the sharp embedding between the spectral Barron space and the Besov space with embedding constants independent of the input dimension. Given the spectral Barron space as the target function space, we prove a dimension-free convergence result that if the neural network contains $L$ hidden layers with $N$ units per layer, then the upper and lower bounds of the $L^2$-approximation error are $\mathcal{O}(N^{-sL})$ with $0 < sL\le 1/2$, where $s\ge 0$ is the smoothness index of the spectral Barron space.
\end{abstract}
\maketitle
\section{Introduction}\label{sec:intro}
A series of works have been devoted to studying the neural network approximation error and generalization error with the Barron class~\cites{Barron:1992,Barron:1993,Barron:1994,Barron:2018} as the target function space. For $f$ a complex-valued function and $s\ge 0$, the spectral norm $\upsilon_{f,s}$ is defined as
\[
	\upsilon_{f,s}{:}=\int_{\mathbb{R}^d}\abs{\xi}^s\abs{\wh{f}(\xi)}\mathrm{d}\xi,
\]
where $\widehat{f}$ is the Fourier transform of $f$ in the sense of distribution. A function $f$ is said to belong to the Barron class if the spectral norm $\upsilon_{f,s}$ is finite, and the pointwise Fourier inversion holds true. Nonetheless, it is vital to note that this definition lacks rigor, as it fails to specify the conditions under which the pointwise Fourier inversion is valid. Addressing this issue is a nontrivial matter, as has been discussed in~\cite{Pinsky:1997}. Subsequently, the authors in~\cite{Ma:2017,Xu:2020,Siegel:2022,Siegel:2023} assume $f\in L^1(\mb{R}^d)$ and $s\ge 0$, and define 
\begin{equation}\label{eq:hat-bs}
\wh{\ms{B}}^s(\mb{R}^d){:}=\set{f\in L^1(\mb{R}^d)}{\upsilon_{f,0}+\upsilon_{f,s}<\infty}.
\end{equation}
For functions in $\wh{\ms{B}}^s(\mb{R}^d)$, the Fourier transform and the pointwise Fourier inversion are valid. Unfortunately, we shall prove in Lemma~\ref{lema:hat-bs} that $\wh{\ms{B}}^s(\mb{R}^d)$ equipped with the norm $\upsilon_{f,0}+\upsilon_{f,s}$ is not complete, and thus,  it is not a Banach space.

To tackle this issue, an alternative function spaces has been proposed, which roots tracing back to the seminar work of H\"ormander~\cite{Hormander:1963}. It is defined as follows:
\[
\ms{F}L^s_p(\mb{R}^d){:}=\set{f\in\ms{S}'(\mb{R}^d)}{(1+\abs{\xi}^s)\wh{f}(\xi)\in L^p(\mb{R}^d)}
\]
for $1\le p\le\infty$ and $s\ge 0$. This space has been studied extensively and bears various names. It is sometimes called the H\"ormander space, as mentioned in works such as~\cite{Hormander:1963,Messina:2001,DGSM60:2014,Ivec:2021}; alternatively, it may be referred to  the Fourier Lebesgue space, as evidenced in the works such as~\cite{Grochenig:2002,Pilipovic:2010,BenyiOh:2013,Kato:2020}. We focus on $p=1$ and $s\ge 0$, and denote it as the spectral Barron space:
\[
\ms{B}^s(\mb{R}^d){:}=\set{f\in\ms{S}'(\mathbb{R}^d)}{\upsilon_{f,0}+\upsilon_{f,s}<\infty},
\]
which is equipped with the norm
\[
\nm{f}{\ms{B}^s(\mb{R}^d)}{:}=\upsilon_{f,0}+\upsilon_{f,s}=\int_{\mathbb{R}^d}(1+\abs{\xi}^s)\abs{\wh{f}(\xi)}\mr{d}\xi.
\]
We show in Lemma~\ref{lema:fourinv} that the pointwise Fourier inversion is valid for functions in $\ms{B}^s(\mb{R}^d)$ with a nonnegative $s$. Some authors also refer to $\ms{B}^s(\mb{R}^d)$ as the Fourier algebra or Wiener algebra, whose algebraic properties, such as the Wiener-Levy theorem~\cites{Wiener:1932,Levy:1935,Helson:1959}, have been extensively studied in~\cites{ReitherStegeman:2000,Liflyand:2012}.

Another popular space for analyzing shallow neural networks is the Barron space introduced in~\cite{E:2019,EMW:2022}, which can be viewed as shallow neural networks with infinite width. Recent works such as~\cite{Wojtowytsch:2022,E:2022} claimed that the spectral Barron space is considerably smaller than the Barron space. However, as pointed out in~\cite{Caragea:2023}, this claim lacks accuracy because they have not discriminated the smoothness index $s$ in $\ms{B}^s(\mb{R}^d)$. In addition, the variation space, initially introduced in~\cite{Barron:2008}, has been studied in relation to the spectral Barron space $\ms{B}^s(\mb{R}^d)$ and the Barron space in~\cite{SiegelXu:2022,Siegel:2023}. These spaces have been exploited to study the regularity of partial differential equations~\cite{ChenLuLu:2021,Lu:2021,E:2022,Chen:2023}.  Recently a novel space, originating from variational spline theory, as discussed in~\cite{ParhiNowak:2022}, which is closely related to the variation space, has emerged as a target function space for neural network approximation~\cite{ParhiNowak:2023}.

The first objective of the present work is the analytical properties of $\ms{B}^s(\mb{R}^d)$. In Lemma~\ref{lema:Banach}, we show that $\ms{B}^s(\mb{R}^d)$ is complete, while Lemma~\ref{lema:hat-bs} shows that $\ms{\wh{B}}^s(\mb{R}^d)$ is not complete. This distinction highlights a key difference between these two spaces. Furthermore, Lemma~\ref{lema:ex3} provides an example that illustrates functions in $\ms{B}^s(\mb{R}^d)$ may decay arbitrarily slow. This elegantly constructed example, utilizing the generalized Hypergeometric function, unveils intriguing relationships between the Fourier transform and the decay rate of the functions. Furthermore, we study the relations between $\ms{B}^s(\mb{R}^d)$ and some classical function spaces. In Theorem~\ref{thm:Besov}, we establish the connections between $\ms{B}^s(\mb{R}^d)$ and the Besov space. Moreover, in Corollary~\ref{coro:Sobolev}, we establish the connections between $\ms{B}^s(\mb{R}^d)$ and the Sobolev spaces. 

Notably, we prove the embedding relation
\[
	B^{s+d/2}_{2,1}(\mb{R}^d)\hookrightarrow\ms{B}^s(\mb{R}^d)\hookrightarrow B^s_{\infty,1}(\mb{R}^d),
\]
which is an optimal result seemingly absent from the existing literature. Roughly speaking, the optimality is understood in the sense that $B^{s+d/2}_{2,1}(\mb{R}^d)$ is the largest Besov space contained in $\ms{B}^s(\mb{R}^d)$, while $B^s_{\infty,1}(\mb{R}^d)$ is the smallest Besov space that contains $\ms{B}^s(\mb{R}^d)$. We refer to Thoerem~\ref{thm:Besov} for a precise statement. Moreover, the embedding constants are independent of the input dimension $d$, which indicates that the embedding is effective in high dimension. This embedding may serve as a bridge to study how the Barron space, the variation space and the space in~\cite{ParhiNowak:2022} are related to the classical function spaces such as the Besov space. To the best of our knowledge, only a few studies have addressed the embedding relationships between neural network-related spaces and classical function spaces, such as~\cite{Kutyniok:2022,Grohs:2023b}, while the scope of their investigations has not been comprehensive.

The second objective of this work is to explore the neural network approximation on a bounded domain. Building upon Barron's seminal works on approximating functions in $\ms{B}^1(\mb{R}^d)$ with $L^2$-norm, recent studies have extended the approximation to functions in $\ms{B}^{k+1}(\mathbb{R}^d)$ with $H^k$-norm, as demonstrated in~\cites{Siegel:2020,Xu:2020}. Furthermore, improved approximation rates have been achieved for functions in $\ms{B}^s(\mathbb{R}^d)$ with large $s$
in works such as~\cites{Bresler:2020,MaSiegelXu:2022,Siegel:2022}. These advancements contribute to a deeper understanding of the approximation capabilities of neural networks.

The distinction between deep ReLU networks and shallow networks has been highlighted in the separation theorems presented in~\cite{Eldan:2016,Telgarsky:2016,Shamir:2022,Grohs:2023a}. These theorems offer examples that can be well approximated by deep networks but not by shallow networks without the curse of dimensionality. This sheds light on the differences in the expressive power between the shallow and deep neural networks. Moreover, the approximation rates for neural networks targeting mixed derivative Besov/Sobolev spaces, spectral Barron spaces, and H\"older spaces have also been investigated. These studies contribute to a broader understanding of the approximation capabilities of neural networks in various function spaces as in~\cite{Du:2019,BreslerNagaraj:2020,Bolcskei:2021,LuShenYangZhang:2021,Suzuki:2021}. Additional works focusing on deep neural network approximation in novel spaces may be founded in~\cite{Schmidt-Hieber:2020,E:2022,Kutyniok:2022}. 

We focus on the $L^2$-approximation properties for functions in $\ms{B}^s(\mb{R}^d)$ when $s$ is small. 
In Theorem~\ref{thm:deep}, we establish that a neural network with 
$L$ hidden layers and $N$ units in each layer can approximate functions in $\ms{B}^s(\mb{R}^d)$ with a convergence rate of $\mc{O}(N^{-sL})$ when $0<sL\le 1/2$. This bound is sharp, as proved in Theorem~\ref{thm:lower}.
Our results provide optimal convergence rates compared to existing literature. For deep neural networks, a similar result has been presented in~\cite{BreslerNagaraj:2020} with a convergence rate of $\mc{O}(N^{-sL/2})$. For shallow neural network; i.e., $L=1$, convergence rates of $\mc{O}(N^{-1/2})$ have been established in~\cites{MengMing:2022,Siegel:2022} when $s=1/2$. However, it is worth noting that the constants in their estimates depend on the dimension at least polynomially, or even exponentially, and require other bounded norms besides $\upsilon_{f,s}$. Our results provide a significant advancement by achieving optimal convergence rates without the additional dependency on dimension or other bounded norms.

The remaining part of the paper is structured as follows. In Section 2, we demonstrate that the spectral Barron space is a Banach space and examine its relationship with other function spaces. This analysis provides a foundation for understanding the properties of the spectral Barron space. In Section 3, we delve into the error estimation for approximating functions in the spectral Barron space using deep neural networks with finite depth and infinite width. By investigating the convergence properties of these networks, we gain insights into their approximation capabilities and provide error bounds for their performance. Finally, in Section 4, we conclude our work by summarizing the key findings and contributions of this study. We also discuss potential avenues for future research and highlight the significance of our results in the broader context of function approximation using neural networks. Certain technical results that are tedious but not the main focusing have been postponed to the Appendix.
\section{Completeness of $\ms{B}^s$ and its relation to other function spaces}\label{sec:relation}
This part discusses the completeness of the spectral Barron space and embedding relations to other classical function spaces. Firstly, we fix some notations. Let $\ms{S}$ be the Schwartz space and let $\ms{S}'$ be its topological dual space, i.e., the space of tempered distribution. The Gamma function
\[
	\Gamma(s){:}=\int_0^\infty t^{s-1}e^{-t}\mr{d}t,\qquad s>0.
\]
Denoting the surface area of the unit sphere $\mb{S}^{d-1}$ by $\om_{d-1}=2\pi^{d/2}/\Gamma(d/2)$. The volume of the unit ball is $\nu_d=\om_{d-1}/d$. The Beta function
\[
B(\alpha,\beta){:}=\int_0^1t^{\alpha-1}(1-t)^{\beta-1}\mr{d}t=\dfrac{\Gamma(\alpha)\Gamma(\beta)}{\Gamma(\alpha+\beta)},\qquad \alpha,\beta>0.
\]

The series formulation of the first kind of Bessel function is defined as 
\[
J_\nu(x){:}=(x/2)^\nu\sum_{k=0}^\infty(-1)^k\dfrac{(x/2)^{2k}}{\Gamma(\nu+k+1)k!}.
\]
This definition may be found in~\cite{Luke:1962}*{\S~1.4.1, Eq. (1)}.

For $f\in L^1(\mb{R}^d)$, its Fourier transform of $f$ is defined as
\[
	\wh{f}(\xi){:}=\int_{\mb{R}^d}f(x)e^{-2\pi ix\cdot\xi}\mr{d}x,
\]
and the inverse Fourier transform is defined as
\[
	f^\vee(x){:}=\int_{\mb{R}^d}f(\xi)e^{2\pi ix\cdot\xi}\mr{d}\xi.
\]
If $f\in\ms{S}'(\mb{R}^d)$, then the Fourier transform in the sense of distribution means
\[
\dual{\wh{f}}{\varphi}=\dual{f}{\wh{\varphi}}\qquad \text{for any}\quad\varphi\in\ms{S}(\mb{R}^d)\subset L^1(\mb{R}^d).
\]

We shall frequently use the following Hausdorff-Young inequality. Let $1\le p \le 2$ and $f\in L^p(\mb{R}^d)$, then
\begin{equation}\label{eq:hyineq}
	\nm{\wh{f}}{L^{p'}(\mb{R}^d)}\le \nm{f}{L^p(\mb{R}^d)},
\end{equation}
 where $p'$ is the conjugate exponent of $p$; i.e. $1/p+1/p'=1$.

We shall use the following pointwise Fourier inversion theorem. 
\begin{lemma}\label{lema:fourinv}
	Let $g\in L^1(\mb{R}^d)$, then $\wh{g^\vee}=g$ in $\ms{S}'(\mb{R}^d)$. Furthermore, let $f\in\ms{S}'(\mb{R}^d)$ and $\wh{f}\in L^1(\mb{R}^d)$, then $(\wh{f})^\vee=f$, a.e. on $\mb{R}^d$. 
\end{lemma}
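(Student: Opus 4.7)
The plan is to handle the two assertions in sequence, reducing both to the classical Fourier inversion on the Schwartz class, which may be taken as a prerequisite.

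For the first assertion, I would first observe that $g\in L^1(\mb{R}^d)$ forces $g^\vee$ to be a bounded continuous function, since $\abs{g^\vee(x)}\le\nm{g}{L^1(\mb{R}^d)}$ for every $x$, hence $g^\vee\in L^\infty(\mb{R}^d)\subset\ms{S}'(\mb{R}^d)$. Then, for an arbitrary $\varphi\in\ms{S}(\mb{R}^d)$, I would unwind the distributional definition to write $\dual{\wh{g^\vee}}{\varphi}=\dual{g^\vee}{\wh\varphi}$, which, since $g^\vee$ is a bounded function, is an honest integral against $\wh\varphi\in\ms{S}\subset L^1(\mb{R}^d)$. The hypotheses $g\in L^1$ and $\wh\varphi\in L^1$ then render the double integral $\int_{\mb{R}^d}\int_{\mb{R}^d}g(\xi)e^{2\pi ix\cdot\xi}\wh\varphi(x)\,\mr{d}\xi\,\mr{d}x$ absolutely convergent, so Fubini collapses it to $\int_{\mb{R}^d}g(\xi)(\wh\varphi)^\vee(\xi)\,\mr{d}\xi$. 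Invoking the classical Fourier inversion on the Schwartz class, namely $(\wh\varphi)^\vee=\varphi$, then yields $\dual{\wh{g^\vee}}{\varphi}=\dual{g}{\varphi}$, which is the desired identity in $\ms{S}'(\mb{R}^d)$.

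For the second assertion, I would apply the first part to $g=\wh f\in L^1(\mb{R}^d)$, obtaining $\wh{(\wh f)^\vee}=\wh f$ in $\ms{S}'(\mb{R}^d)$. The final step is to invoke the injectivity of the Fourier transform on $\ms{S}'(\mb{R}^d)$, which is a routine consequence of the Schwartz-space inversion: if $\wh h=0$ in $\ms{S}'$, then $\dual{h}{\wh\varphi}=0$ for every $\varphi\in\ms{S}$, and since the Fourier transform maps $\ms{S}$ bijectively onto itself this forces $h=0$. Applying this to $h=(\wh f)^\vee-f$ yields $(\wh f)^\vee=f$ in $\ms{S}'(\mb{R}^d)$; since $(\wh f)^\vee$ is a bounded continuous function, this distributional equality upgrades to pointwise equality almost everywhere.

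The main obstacle, if any, is purely bookkeeping within the distributional framework: one must carefully distinguish between the distributional Fourier transform (defined by duality) and the pointwise Fourier integral, and verify that the latter produces a legitimate locally integrable representative before passing to the a.e.\ statement. The Fubini step is automatic given the $L^1$ hypothesis on $g$ together with the rapid decay of $\wh\varphi$, so I do not anticipate any genuine analytical difficulty, only attentive tracking of the spaces involved.
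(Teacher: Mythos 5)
Your argument is correct and follows essentially the same route as the paper: both parts reduce to the Schwartz-class inversion by unwinding the duality pairing, and the a.e.\ upgrade rests on $(\wh f)^\vee$ being bounded (the paper reaches this via Hausdorff--Young and an explicit $L^\infty$ identification of $f$, while you get it from the pointwise bound $\abs{(\wh f)^\vee(x)}\le\nm{\wh f}{L^1}$; both are fine). The only cosmetic difference is that you obtain $(\wh f)^\vee=f$ in $\ms S'$ by applying the first part to $g=\wh f$ and citing injectivity of the Fourier transform on $\ms S'$, whereas the paper verifies the identity directly via $\dual{(\wh f)^\vee}{\varphi}=\dual{\wh f}{\varphi^\vee}=\dual{f}{\varphi}$ — the same computation packaged slightly differently.
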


\begin{proof}
By definition, there holds 
	\[
		\dual{\wh{g^\vee}}{\varphi}=\dual{g^\vee}{\wh{\varphi}}
		=\dual{g}{\varphi}\qquad\text{for any}\quad\varphi\in\ms{S}(\mb{R}^d).
	\]
Therefore, $\wh{g^\vee}=g$ in $\ms{S}'(\mb{R}^d)$.	Note that $\wh{f}\in L^1(\mb{R}^d)$, 
	\[
		\dual{(\wh{f})^\vee}{\varphi}=\dual{\wh{f}}{\varphi^\vee}=\dual{f}{\varphi}\qquad\text{for any}\quad\varphi\in\ms{S}(\mb{R}^d).
	\]
	By the Hausdorff-Young inequality~\eqref{eq:hyineq},
	\[
		\nm{(\wh{f})^\vee}{L^\infty(\mb{R}^d)}\le\nm{\wh{f}}{L^1(\mb{R}^d)}.
	\]
Therefore, $f$ is a linear bounded operator on $L^1(\mb{R}^d)$; i.e., $f\in [L^1(\mb{R}^d)]^*=L^\infty(\mb{R}^d)$ due to $\ms{S}(\mb{R}^d)$ is dense in $L^1(\mb{R}^d)$ and
	\[
		\abs{\dual{f}{\varphi}}=\abs{\dual{(\wh{f})^\vee}{\varphi}}\le\nm{(\wh{f})^\vee}{L^\infty(\mb{R}^d)}\nm{\varphi}{L^1(\mb{R}^d)}\le\nm{\wh{f}}{L^1(\mb{R}^d)}\nm{\varphi}{L^1(\mb{R}^d)}. 
	\]
Hence, $(\wh{f})^\vee=f$, a.e. on $\mb{R}^d$ because $(\wh{f})^\vee-f\in L^\infty(\mb{R}^d)$~\cite{Brezis:2011}*{Corollary 4.24}.
\end{proof}

A direct consequence of Lemma~\ref{lema:fourinv} is that the pointwise Fourier inversion is valid for functions in $\ms{B}^s(\R^d)$. We shall frequently use this fact later on.
\subsection{Completeness of the spectral Barron space}
\begin{lemma}\label{lema:Banach}
\begin{enumerate}
\item $\ms{B}^s(\mb{R}^d),s\ge 0$ is a Banach space.

\item When $s>0$, $\ms{B}^s(\mb{R}^d)$ is not a Banach space if the norm $\nm{f}{\ms{B}^s(\R^d)}$ is replaced by $\upsilon_{f,s}$.
\end{enumerate}
\end{lemma}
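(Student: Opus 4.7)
\medskip
\noindent\textbf{Proof plan.}

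For part (1), the plan is to reduce completeness of $\ms{B}^s(\R^d)$ to completeness of a weighted $L^1$ space on the Fourier side. Let $\{f_n\}\subset\ms{B}^s(\R^d)$ be Cauchy. By definition
\[
	\nm{f_n-f_m}{\ms{B}^s(\R^d)}=\int_{\R^d}(1+\abs{\xi}^s)\abs{\wh{f_n}(\xi)-\wh{f_m}(\xi)}\md\xi,
\]
so $\{\wh{f_n}\}$ is Cauchy in the weighted space $L^1(\R^d,(1+\abs{\xi}^s)\md\xi)$. This weighted $L^1$ space is complete (it is isometric to an ordinary $L^1$ space via multiplication by the weight), so there exists $g\in L^1(\R^d,(1+\abs{\xi}^s)\md\xi)$ with $\wh{f_n}\to g$ in the weighted norm. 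In particular $g\in L^1(\R^d)$, so I would define $f{:}= g^\vee$. Since $g\in L^1$, $g^\vee$ is bounded and continuous, hence $f\in\ms{S}'(\R^d)$; Lemma~\ref{lema:fourinv} then gives $\wh{f}=g$. Thus $f\in\ms{B}^s(\R^d)$ and $f_n\to f$ in $\ms{B}^s(\R^d)$.

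For part (2), the strategy is to exhibit a Cauchy sequence (in the seminorm $\upsilon_{f,s}$, which is actually a norm on $\ms{B}^s$ because $\wh{f}\in L^1$ and $|\xi|^s>0$ a.e.) whose Fourier transforms converge in $L^1$ after weighting by $|\xi|^s$, but where the limit of the weighted transforms, divided by $|\xi|^s$, fails to be integrable near the origin. Concretely, I would pick $\alpha$ with $\max(0,d-s)<\alpha<d$ (possible since $s>0$), set
\[
	h(\xi){:}=\abs{\xi}^{-\alpha}\mathbf{1}_{\abs{\xi}\le 1},\qquad g(\xi){:}=\abs{\xi}^{-s}h(\xi),
\]
so that $h\in L^1(\R^d)$ while $g\notin L^1(\R^d)$ because $\alpha+s>d$. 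Then define $f_n\in\ms{B}^s(\R^d)$ by $\wh{f_n}(\xi){:}= g(\xi)\mathbf{1}_{\abs{\xi}>1/n}$; this is in $L^1$ since $g$ is locally bounded away from $0$. Dominated convergence gives $\abs{\xi}^s\wh{f_n}\to h$ in $L^1(\R^d)$, so $\{f_n\}$ is Cauchy in the $\upsilon_{\cdot,s}$ norm.

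To conclude that no $\ms{B}^s$-limit exists, I would argue by contradiction: if $f^*\in\ms{B}^s(\R^d)$ satisfied $\upsilon_{f_n-f^*,s}\to 0$, then $\abs{\xi}^s\wh{f_n}\to\abs{\xi}^s\wh{f^*}$ in $L^1$. Uniqueness of $L^1$ limits forces $\abs{\xi}^s\wh{f^*}(\xi)=h(\xi)$ a.e., hence $\wh{f^*}=g$ a.e.; but then $\wh{f^*}\notin L^1(\R^d)$, contradicting $f^*\in\ms{B}^s(\R^d)$. The main obstacle is calibrating the singularity: the exponent $\alpha$ must simultaneously keep $h$ integrable near the origin and make $\abs{\xi}^{-s}h$ non-integrable, which is precisely where $s>0$ is used and why the construction fails for $s=0$.
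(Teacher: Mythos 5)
Your proof is correct. For part (1), your argument is essentially the paper's, just organized more efficiently: the paper treats the plain $L^1$ convergence of $\wh{f}_k$ and the convergence in $L^1(\R^d,\abs{\xi}^s\md\xi)$ as two separate facts and reconciles the limits via a.e.\ convergence along subsequences, whereas you work directly in the single weighted space $L^1(\R^d,(1+\abs{\xi}^s)\md\xi)$, whose completeness gives the limit $g$ in one stroke; the rest (defining $f=g^\vee$ and invoking Lemma~\ref{lema:fourinv}) is identical.

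For part (2) you take a genuinely different route. The paper argues indirectly: assuming $(\ms{B}^s,\upsilon_{\cdot,s})$ is Banach, it invokes part (1) together with the bounded inverse theorem to obtain an inequality $\upsilon_{f,0}\le C\upsilon_{f,s}$, and then constructs, via Bochner--Riesz multipliers $\phi_{2^{-k}}$, a sequence with $\upsilon_{f_n,0}\sim n$ while $\upsilon_{f_n,s}\sim 1$, contradicting that inequality. You instead produce an explicit $\upsilon_{\cdot,s}$-Cauchy sequence with no limit in $\ms{B}^s$, by choosing a power-law singularity at the origin with exponent $\alpha\in(\max(0,d-s),d)$ so that $\abs{\xi}^{-\alpha}$ is integrable near $0$ but $\abs{\xi}^{-\alpha-s}$ is not, and truncating at radius $1/n$. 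Your construction is more elementary — it needs neither part (1) nor the open mapping theorem, and it makes transparent why $s>0$ is required (the gap between integrability thresholds $d-s$ and $d$ collapses at $s=0$). The paper's route has the side benefit that the Bochner--Riesz computation \eqref{eq:phiR}--\eqref{eq:phiRs} is reused verbatim in Lemma~\ref{lema:hat-bs}, so the extra machinery pays for itself there. One small point worth keeping explicit in a write-up: your contradiction step relies on $\upsilon_{f^*,0}=\nm{\wh{f^*}}{L^1(\R^d)}<\infty$ being part of membership in $\ms{B}^s$, which is exactly why forcing $\wh{f^*}=g\notin L^1$ is fatal; you have this right, but it is the hinge of the argument and deserves emphasis.
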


\begin{proof}
We give a brief proof for the first claim for the readers' convenience, which has been stated in~\cite{Hormander:1963}*{Theorem 2.2.1}.

It is sufficient to check the completeness of $\ms{B}^s(\mb{R}^d)$. For any Cauchy sequence $\{f_k\}_{k=1}^\infty\subset\ms{B}^s(\mb{R}^d)$, there exists $g\in L^1(\mb{R}^d)$ such that $\wh{f}_k\to g$ in $L^1(\mb{R}^d)$. Therefore there exists a sub-sequence of $\{f_k\}_{k=1}^\infty$(still denoted by $f_k$) such that $\wh{f}_k\to g$ a.e. on $\mb{R}^d$. 

	Define the measure $\mu$ by setting that for any measurable set $E\subset\mb{R}^d$,
	\[
		\mu(E){:}=\int_E\abs{\xi}^s\mr{d}\xi.
	\]
	Then $\{\wh{f}_k\}_{k=1}^\infty$ is a Cauchy sequence in $L^1(\mb{R}^d,\mu)$ and there exists $h\in L^1(\mb{R}^d,\mu)$ such that $\wh{f}_k\to h$ in $L^1(\mb{R}^d,\mu)$. Therefore there exists a sub-sequence of $\{f_k\}_{k=1}^\infty$(still denoted by $f_k$) such that $\wh{f}_k\to h$ $\mu$-a.e. on $\mb{R}^d$. Note that for any measurable set $E\subset\mb{R}^d$, $\mu(E)=0$ is equivalent to $\abs{E}=0$. Therefore $\wh{f}_k\to h$ a.e. on $\mb{R}^d$. By the uniqueness of limitation, $h=g$, a.e. on $\mb{R}^d$. 

Define $f=g^\vee$. Lemma~\ref{lema:fourinv} shows that $\wh{f}=g$ in $\ms{S}'(\mb{R}^d)$. Therefore $f\in\ms{B}^s(\mb{R}^d)$ and $f_k\to f$ in $\ms{B}^s(\mb{R}^d)$. Hence $\ms{B}^s$ is complete and it is a Banach space.

We prove (2) by contradiction, suppose that the assertion (2) is false, i.e., $\ms{B}^s(\mb R^d)$ equipped merely with the spectral norm $\upsilon_{f,s}$ is a Banach space, then there exists $C$ depending only on $s$ and $d$ such that for any $f\in\ms{B}^s(\mb{R}^d)$,
\begin{equation}\label{eq:banach1}
\upsilon_{f,0}\le C\upsilon_{f,s}.
\end{equation}
The following example suggests that this is false.

For some $\delta>-1$, let
\[
f_n(x)=\Lr{\sum_{k=1}^n2^{kd}(1-2^{2k}\abs{\xi}^2)_+^\delta}^\vee(x).
\]
We rewrite $f_n$ in terms of the Bochner-Riesz multipliers that is defined by
\[
		\phi_R=\Lr{\Lr{1-\dfrac{\abs{\xi}^2}{R^2}}_+^\delta}^\vee,\qquad\delta>-1.
\]
Therefore, $f_n=\sum_{k=1}^n\phi_{2^{-k}}(x)$. We claim that $\phi_R$ admits the following explicit representation
\begin{equation}\label{eq:phiR}
		\phi_R(x)=\dfrac{\Gamma(\delta+1)}{\pi^\delta\abs{x}^{\delta+d/2}}R^{-\delta+d/2}J_{\delta+d/2}(2\pi\abs{x}R),
\end{equation}
and for $s\ge 0$, the spectrum norm of $\phi_R$ is
\begin{equation}\label{eq:phiRs}
		\upsilon_{\phi_R,s}=\dfrac{\om_{d-1}}2B\Lr{\dfrac{s+d}2,\delta+1}R^{s+d}.
\end{equation}
The proof of the above two identities is postponed to Appendix~\ref{apd:phiR}. It follows from~\eqref{eq:phiR} that
\begin{equation}\label{eq:ex1}
f_n(x)=\dfrac{\Gamma(\delta+1)}{\pi^\delta\abs{x}^{\delta+d/2}}\sum_{k=1}^n2^{k(\delta+d/2)}J_{\delta+d/2}(2^{1-k}\pi\abs{x}),
\end{equation} 
and $f_n\in\ms{B}^s(\mb{R}^d)$ with
	\[
		\upsilon_{f_n,s}=\sum_{k=1}^n2^{kd}\upsilon_{\phi_{2^{-k}},s}
		=\dfrac{1-2^{-ns}}{2^{s+1}-2}\om_{d-1}B\Lr{\dfrac{s+d}2,\delta+1},
	\]
	and
	\[
		\upsilon_{f_n,0}=\sum_{k=1}^n2^{kd}\upsilon_{\phi_{2^{-k}},0}
		=\dfrac{\om_{d-1}}2B\Lr{\dfrac{d}2,\delta+1}n.
	\]
 where we have used~\eqref{eq:phiRs}.
It is clear that
\[
\dfrac{\om_{d-1}}{2^{s+1}}B\Lr{\dfrac{s+d}2,\delta+1}\le\upsilon_{f_n,s}\le
\dfrac{\om_{d-1}}{2^{s+1}-2}B\Lr{\dfrac{s+d}2,\delta+1}.
\]
Hence $\upsilon_{f_n,0}\simeq\mc{O}(n)$ while $\upsilon_{f_n,s}\simeq\mc{O}(1)$. This shows that~\eqref{eq:banach1} is invalid for a large number $n$. This proves the second claim and completes the proof.
\end{proof}

Similar to $\ms{B}^s(\mb{R}^d)$, the space $\wh{\ms{B}}^s(\mb{R}^d)$ defined in~\eqref{eq:hat-bs} has been exploited as the target function space for neural network approximation by several authors~\cites{Ma:2017,Xu:2020,Siegel:2022,Siegel:2023}. The advantage of this space is that the Fourier transform is well-defined and the pointwise Fourier inversion is true for functions belonging to $\wh{\ms{B}}^s(\mb{R}^d)$. Unfortunately, $\wh{\ms{B}}^s(\mb{R}^d)$ is not a Banach space as shown below. 
\begin{lemma}\label{lema:hat-bs}
The space $\wh{\ms{B}}^s(\mb{R}^d),s\ge 0$ defined in~\eqref{eq:hat-bs} equipped with the norm $\upsilon_{f,0}+\upsilon_{f,s}$ is not a Banach space.
\end{lemma}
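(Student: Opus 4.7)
The plan is to exploit the fact that $\wh{\ms{B}}^s(\mb{R}^d)$ sits inside $\ms{B}^s(\mb{R}^d)$ carrying an identical norm, combined with the completeness of $\ms{B}^s(\mb{R}^d)$ established in Lemma~\ref{lema:Banach}(1). It therefore suffices to exhibit a sequence $\{f_n\}\subset\wh{\ms{B}}^s(\mb{R}^d)$ that is Cauchy in the $\wh{\ms{B}}^s$-norm but whose $\ms{B}^s$-limit fails to lie in $L^1(\mb{R}^d)$: were $\wh{\ms{B}}^s(\mb{R}^d)$ complete, the norm identity would force the two limits to coincide, contradicting the fact that the $\ms{B}^s$-limit is not an $L^1$-function.

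For the concrete example I specialise the Bochner-Riesz multiplier $\phi_R$ from the proof of Lemma~\ref{lema:Banach} to $\delta=0$ and $R=1$, so that $\wh{\phi_1}(\xi)=\chi_{\abs{\xi}\le 1}(\xi)$ and, by formula~\eqref{eq:phiR}, $\phi_1(x)=\abs{x}^{-d/2}J_{d/2}(2\pi\abs{x})$. The Bessel asymptotic $J_{d/2}(r)=\sqrt{2/(\pi r)}\cos(r-(d+1)\pi/4)+O(r^{-3/2})$ gives $\abs{\phi_1(x)}\simeq\abs{x}^{-(d+1)/2}\abs{\cos(2\pi\abs{x}-(d+1)\pi/4)}$ at infinity; integrating against $\abs{x}^{d-1}$ reduces $L^1$-integrability to the convergence of $\int_1^\infty r^{(d-3)/2}\abs{\cos(2\pi r-\theta)}\mr{d}r$, which fails for every $d\ge 1$ because the oscillating factor has positive mean $2/\pi$. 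Hence $\phi_1\notin L^1(\mb{R}^d)$, although $\wh{\phi_1}$ is bounded with compact support. Fixing a mollifier $\eta\in C_c^\infty(\mb{R}^d)$ supported in $\{\abs{x}\le 1\}$ with $\int\eta=1$ and $\eta_n(x)=n^d\eta(nx)$, I set $g_n:=\chi_{\abs{\cdot}\le 1}*\eta_n\in C_c^\infty(\mb{R}^d)$ and $f_n:=g_n^{\vee}\in\ms{S}(\mb{R}^d)\subset L^1(\mb{R}^d)$; since $\wh{f_n}=g_n$, each $f_n$ belongs to $\wh{\ms{B}}^s(\mb{R}^d)$.

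Finally, the uniform support bound $\mathrm{supp}\,g_n\subset\{\abs{\xi}\le 2\}$ together with the standard $L^1$-mollification estimate yield
\[
	\nm{f_n-f_m}{\ms{B}^s(\mb{R}^d)}=\int_{\mb{R}^d}(1+\abs{\xi}^s)\abs{g_n-g_m}\,\mr{d}\xi\le(1+2^s)\nm{g_n-g_m}{L^1(\mb{R}^d)}\longrightarrow 0,
\]
so $\{f_n\}$ is Cauchy. Lemma~\ref{lema:Banach}(1) together with Lemma~\ref{lema:fourinv} identifies the $\ms{B}^s$-limit of this sequence as $\phi_1\notin L^1(\mb{R}^d)$, closing the contradiction. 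The main technical hurdle is the non-integrability of $\phi_1$: enough information from the Bessel asymptotic is needed to rule out hidden cancellation in the $L^1$-norm at infinity, and the decay rate $\abs{x}^{-(d+1)/2}$ turns out to lie precisely on the wrong side of $L^1$ in every dimension; the remaining mollification and uniqueness-of-limits steps are routine.
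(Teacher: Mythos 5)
Your proof is correct, and it takes a genuinely different route from the paper's. The paper argues via the bounded inverse theorem: since $\ms{B}^s_1(\mb{R}^d)$ from~\eqref{eq:Bsp} coincides with $\wh{\ms{B}}^s(\mb{R}^d)$ as a set and the moment inequality~\eqref{eq:inter} gives a continuous inclusion $\ms{B}^s_1\hkto\wh{\ms{B}}^s$, completeness of $\wh{\ms{B}}^s$ would force the reverse estimate $\nm{f}{L^1}\le C(\upsilon_{f,0}+\upsilon_{f,s})$; this is then contradicted by the partial sums $f_n=\sum_{k=1}^n\phi_{2^{-k}}$ with $\delta>(d-1)/2$, whose $L^1$-norm grows like $n$ while $\upsilon_{f_n,0}+\upsilon_{f_n,s}$ stays bounded. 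You instead prove non-completeness directly, exploiting the fact that $\wh{\ms{B}}^s$ sits inside the complete space $\ms{B}^s$ carrying the very same norm: you mollify $\chi_{B_1}$ on the Fourier side to get a Cauchy sequence $\{f_n\}\subset\wh{\ms{B}}^s$ whose $\ms{B}^s$-limit is the Bochner--Riesz kernel $\phi_1$ at $\delta=0$, which is not in $L^1$. Your route is more self-contained: it dispenses with the bounded inverse theorem, with the auxiliary space $\ms{B}^s_1$, and with the external result~\cite{MengMing:2022} that $\ms{B}^s_p$ is complete, at the modest cost of establishing $\phi_1\notin L^1(\mb{R}^d)$ via the Bessel asymptotic. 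For that last step you could in fact bypass the asymptotic expansion entirely: if $\phi_1$ were in $L^1(\mb{R}^d)$, its Fourier transform would be continuous, yet $\wh{\phi_1}=\chi_{B_1}$ has a jump discontinuity across $\abs{\xi}=1$ and therefore cannot agree a.e.\ with any continuous function. Either way, the conclusion and the logic leading to it are sound.
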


To prove Lemma~\ref{lema:hat-bs}, we recall the Barron spectrum space introduced by \textsc{Meng and Ming} in~\cite{MengMing:2022}: For $s\in\R$ and $1\le p\le 2$,
\begin{equation}\label{eq:Bsp}
\ms{B}^s_p(\mb{R}^d){:}=\set{f\in L^p(\mb{R}^d)}{\nm{f}{L^p(\mb{R}^d)}+\upsilon_{f,s}<\infty} 
\end{equation}
equipped with the norm $\nm{f}{\ms{B}^s_p(\R^d)}{:}=\nm{f}{L^p(\mb{R}^d)}+\upsilon_{f,s}$. A useful moment inequality that compares the spectral norm of different indexes has been proved in~\cite{MengMing:2022}*{Lemma 2.1}: For $1\le p\le 2$ and $-d/p<s_1<s_2$, there exists $C$ depending only on $s_1,s_2,d$ and $p$ such that
\begin{equation}\label{eq:inter}
\upsilon_{f,s_1}\le C\nm{f}{L^p(\R^d)}^{\gamma}\upsilon_{f,s_2}^{1-\gamma},
\end{equation}
where $\gamma=(s_2-s_1)/(s_2+d/p)$.  For any $\varepsilon>0$, denoting $f_\varepsilon:=f(x/\varepsilon)$ and using
\[
\upsilon_{f_{\varepsilon},s}=\varepsilon^{-s}\upsilon_{f,s},
\]
we observe that the inequality~\eqref{eq:inter}  is dilation invariant because it is invariant if we replace $f$ by $f_{\varepsilon}$. 

\begin{proof}[Proof of Lemma~\ref{lema:hat-bs}]
The authors in~\cite{MengMing:2022} have proved that $\ms{B}^s_p(\mb{R}^d)$ defined above is a Banach space. For any $f\in\ms{B}^s_1(\mb{R}^d)$, taking $s_1=0,s_2=s$ and $p=1$ in~\eqref{eq:inter}, we obtain, there exists $C$ depending only on $d$ and $s$ such that
\[
\upsilon_{f,0}\le C\nm{f}{L^1(\R^d)}^{\gamma}\upsilon_{f,s}^{1-\gamma}\le C\nm{f}{\ms{B}^s_1(\R^d)},
\]
where $\gamma=s/(s+d)$.

On the contrary, suppose that $\wh{\ms{B}}^s(\mb{R}^d)$ equipped with the norm $\upsilon_{f,0}+\upsilon_{f,s}$ is also a Banach space, then by the bounded inverse theorem and the above moment inequality~\eqref{eq:inter}, we get, there exists $C$ depending only on $s$ and $d$ such that for any $f\in\ms{B}^s_1(\mb{R}^d)$,
\begin{equation}\label{eq:banach2}
\nm{f}{L^1(\mb{R}^d)}\le C(\upsilon_{f,0}+\upsilon_{f,s}).
\end{equation}
We obtain a contradiction by the following example. 
	
For some $\delta>(d-1)/2$, we define
\[
f_n(x){:}=\Lr{\sum_{k=1}^n(1-2^{2k}\abs{\xi}^2)_+^\delta}^\vee(x).
\]
Using~\eqref{eq:phiR} and noting $f_n=\sum_{k=1}^n\phi_{2^{-k}}$, we have the explicit form of $f_n$ as
\begin{equation}\label{eq:ex2}
f_n(x)=\dfrac{\Gamma(\delta+1)}{\pi^\delta\abs{x}^{\delta+d/2}}\sum_{k=1}^n2^{k(\delta-d/2)}J_{\delta+d/2}(2^{1-k}\pi\abs{x}).
\end{equation}
Using~\eqref{eq:phiRs}, we get
\[
\upsilon_{f_n,s}=\sum_{k=1}^n\upsilon_{\phi_{2^{-k}},s}=\dfrac{1-2^{-n(s+d)}}{2^{s+d+1}-2}\omega_{d-1}
B\Lr{\dfrac{s+d}2,\delta+1},
\]
and
\[
\dfrac{\omega_{d-1}}{2^{s+d+1}}
B\Lr{\dfrac{s+d}2,\delta+1}\le\upsilon_{f_n,s}\le\dfrac{\omega_{d-1}}{2^{s+d+1}-2}
B\Lr{\dfrac{s+d}2,\delta+1}.
\]

Proceeding along the same line, we obtain
\[
\upsilon_{f_n,0}=\sum_{k=1}^n\upsilon_{\phi_{2^{-k}},0}=\dfrac{1-2^{-nd}}{2^{d+1}-2}\omega_{d-1}
B\Lr{\dfrac{d}2,\delta+1},
\]
and
\[
\dfrac{\omega_{d-1}}{2^{d+1}}
B\Lr{\dfrac{d}2,\delta+1}\le\upsilon_{f_n,0}\le\dfrac{\omega_{d-1}}{2^{d+1}-2}
B\Lr{\dfrac{d}2,\delta+1}.
\]

Hence,
\begin{equation}\label{eq:sumbd}
\upsilon_{f_n,0}+\upsilon_{f_n,s}\le\dfrac{\om_{d-1}}{2}\Lr{\dfrac{B(d/2,\delta+1)}{2^d-1}+\dfrac{B((s+d)/2,\delta+1)}{2^{s+d}-1}}.
\end{equation}

By~\eqref{eq:phiR}, a direct calculation gives
\[
\nm{\phi_R}{L^1(\mb{R}^d)}=\dfrac{\Gamma(\delta+1)}{\pi^\delta R^{\delta-d/2}}\int_{\mb{R}^d}\dfrac{\abs{J_{\delta+d/2}(2\pi\abs{x}R)}}{\abs{x}^{\delta+d/2}}\mr{d}x\\
=\dfrac{2^{\delta}\Gamma(\delta+1)}{\pi^{\delta+d/2}}\int_{\mb{R}^d}
\dfrac{\abs{J_{\delta+d/2}(\abs{x})}}{\abs{x}^{\delta+d/2}}\mr{d}x.
\]

Invoking~\cite{Grafakos:2014}*{Appendix B.6, B.7}, there exists $C$ depending on $\nu$ such that
\[\abs{J_{\nu}(x)}\le C\begin{cases}
 \abs{x}^{\nu}\qquad&\abs{x}\le 1,\\
 \abs{x}^{-1/2}\qquad&\abs{x}>1.
\end{cases}\]

We get, there exists $C$ depending only on $d$ and $\delta$ such that
\begin{align*}
\nm{\phi_R}{L^1(\mb{R}^d)}&=\dfrac{2^{\delta}\Gamma(\delta+1)}{\pi^{\delta+d/2}}\Lr{\int_{\abs{x}\le 1}\dfrac{\abs{J_{\delta+d/2}(\abs{x})}}{\abs{x}^{\delta+d/2}}\mr{d}x+\int_{\abs{x}>1}\dfrac{\abs{J_{\delta+d/2}(\abs{x})}}{\abs{x}^{\delta+d/2}}\mr{d}x}\\
&\le C\Lr{\int_{\abs{x}\le 1}\dx+\int_{\abs{x}>1}\abs{x}^{-1/2-\delta-d/2}\dx}\\
&\le C\Lr{1+\dfrac{1}{\delta-(d-1)/2}},
\end{align*}
where we have used the fact $\delta>(d-1)/2$ in the last step.  Therefore, $\nm{\phi_R}{L^1(\mb{R}^d)}$ is bounded by a constant that depends only on $\delta$ and $d$ but is independent of $R$. Moreover,
\[
\nm{f_n}{L^1(\mb{R}^d)}\le\sum_{k=1}^n\nm{\phi_{2^{-k}}}{L^1(\mb{R}^d)}\le n\nm{\phi_1}{L^1(\mb{R}^d)},
\]
and by the Hausdorff-Young inequality~\eqref{eq:hyineq},
\[
\nm{f_n}{L^1(\mb{R}^d)}\ge\nm{\wh{f}_n}{L^\infty(\mb{R}^d)}=\wh{f}_n(0)=n.
\]
This means that $\nm{f_n}{L^1(\R^d)}=\mc{O}(n)$, which together with~\eqref{eq:sumbd} immediately shows that the inequality~\eqref{eq:banach2} cannot be true for sufficiently large $n$.  Hence, we conclude that $\wh{\ms{B}}^s(\mb{R}^d)$ is not a Banach space.
\end{proof}
\subsection{Embedding relations of the spectral Barron spaces}
In this part we discuss the embedding of the spectral Barron spaces.
\begin{lemma}\label{thm:monotone}\begin{enumerate}
\item The moment inequality:  For any $0\le s_1\le s\le s_2$ satisfying $s=\alpha s_1+(1-\alpha)s_2$ with $0\le\al\le 1$, and $f\in\ms{B}^{s_1}(\mb{R}^d)$, there holds
\begin{equation}\label{eq:inter2}
	\upsilon_{f,s}\le\upsilon_{f,s_1}^\alpha\upsilon_{f,s_2}^{1-\alpha},
\end{equation}
and
\begin{equation}\label{eq:inter3}
\nm{f}{\ms{B}^s(\mb{R}^d)}\le\nm{f}{\ms{B}^{s_1}(\R^d)}^\al\nm{f}{\ms{B}^{s_2}(\R^d)}^{1-\al}.
\end{equation}

\item Let $0\le s_1\le s_2$, there holds $\ms{B}^{s_2}(\mb{R}^d)\hkto\ms{B}^{s_1}(\mb{R}^d)$ with
\begin{equation}\label{eq:monotone}
\nm{f}{\ms{B}^{s_1}(\R^d)}\le\Lr{2-\dfrac{s_1}{s_2}}\nm{f}{\ms{B}^{s_2}(\R^d)}\qquad \forall f\in\ms{B}^{s_2}(\R^d).
\end{equation}
\end{enumerate}
\end{lemma}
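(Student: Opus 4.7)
The plan is to deduce both parts of the lemma from H\"older's inequality and Young's inequality applied to the weighted integral $\upsilon_{f,s}=\int_{\R^d}\abs{\xi}^s\abs{\wh{f}(\xi)}\md\xi$.

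For the moment inequality~\eqref{eq:inter2}, I would use the convex combination $s=\al s_1+(1-\al)s_2$ to factor
\[
\abs{\xi}^s\abs{\wh{f}(\xi)}=\Lr{\abs{\xi}^{s_1}\abs{\wh{f}(\xi)}}^{\al}\Lr{\abs{\xi}^{s_2}\abs{\wh{f}(\xi)}}^{1-\al},
\]
and then apply H\"older's inequality with conjugate exponents $1/\al$ and $1/(1-\al)$ to conclude $\upsilon_{f,s}\le\upsilon_{f,s_1}^{\al}\upsilon_{f,s_2}^{1-\al}$ at once.

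The norm version~\eqref{eq:inter3} is a touch more delicate because the norm carries the extra $\upsilon_{f,0}$ term. My plan is to invoke the discrete H\"older inequality on two-term sums: for nonnegative $a_i,b_i$,
\[
a_1b_1+a_2b_2\le(a_1^{1/\al}+a_2^{1/\al})^{\al}(b_1^{1/(1-\al)}+b_2^{1/(1-\al)})^{1-\al}.
\]
Choosing $a_1=\upsilon_{f,0}^{\al}$, $b_1=\upsilon_{f,0}^{1-\al}$, $a_2=\upsilon_{f,s_1}^{\al}$, $b_2=\upsilon_{f,s_2}^{1-\al}$, and combining with the moment bound for $\upsilon_{f,s}$, I obtain
\[
\upsilon_{f,0}+\upsilon_{f,s}\le\upsilon_{f,0}+\upsilon_{f,s_1}^{\al}\upsilon_{f,s_2}^{1-\al}\le\nm{f}{\ms{B}^{s_1}(\R^d)}^{\al}\nm{f}{\ms{B}^{s_2}(\R^d)}^{1-\al},
\]
as required.

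For part~(2), I would specialize the moment inequality to the convex combination $s_1=(1-s_1/s_2)\cdot 0+(s_1/s_2)\cdot s_2$, which yields $\upsilon_{f,s_1}\le\upsilon_{f,0}^{1-s_1/s_2}\upsilon_{f,s_2}^{s_1/s_2}$, and then apply Young's inequality with conjugate exponents $s_2/(s_2-s_1)$ and $s_2/s_1$ to linearize the right-hand side. Adding $\upsilon_{f,0}$ gives
\[
\upsilon_{f,0}+\upsilon_{f,s_1}\le(2-s_1/s_2)\upsilon_{f,0}+(s_1/s_2)\upsilon_{f,s_2}\le(2-s_1/s_2)(\upsilon_{f,0}+\upsilon_{f,s_2}),
\]
where the last bound uses $s_1/s_2\le 2-s_1/s_2$, equivalent to $s_1\le s_2$. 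The degenerate cases ($s_2=0$ in part (2), or $\al\in\{0,1\}$ in part (1)) reduce to direct inspection. No significant obstacle is anticipated; the only real subtlety is recognizing that the pointwise H\"older bound on $\upsilon_{f,s}$ must be promoted to a sum-level bound via the discrete H\"older inequality in order to absorb the $\upsilon_{f,0}$ contribution in the norm, and this is precisely the step that upgrades the bare moment inequality~\eqref{eq:inter2} to the norm inequality~\eqref{eq:inter3}.
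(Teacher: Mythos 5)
Your proof is correct and takes essentially the same route as the paper: \eqref{eq:inter2} by H\"older on the weighted integral, \eqref{eq:inter3} by the two-term discrete H\"older inequality (which, after the substitutions $a_1=a^\alpha$, $b_1=a^{1-\alpha}$, $a_2=b^\alpha$, $b_2=c^{1-\alpha}$, is exactly the paper's elementary inequality $a+b^\alpha c^{1-\alpha}\le(a+b)^\alpha(a+c)^{1-\alpha}$, there derived from Young's inequality), and \eqref{eq:monotone} by specializing the moment inequality with the lower exponent equal to $0$ and then linearizing with Young. The only cosmetic difference is that you invoke the discrete H\"older inequality directly whereas the paper re-derives that two-term inequality from scratch.
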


The embedding~\eqref{eq:monotone} has been stated in~\cite{Hormander:1963}*{Theorem 2.2.2} without tracing the embedding constant.
\begin{proof}
We start with the moment inequality~\eqref{eq:inter2} for the spectral norm. For any $0\le s_1\le s\le s_2$ with $s=\al s_1+(1-\al)s_2$, using H\"older's inequality, we obtain
\[
\upsilon_{f,s}=\int_{\R^d}\Lr{\abs{\xi}^{s_1}\abs{\wh{f}(\xi)}}^{\al}\Lr{\abs{\xi}^{s_2}\abs{\wh{f}(\xi)}}^{1-\al}\md\xi
\le\upsilon_{f,s_1}^{\al}\upsilon_{f,s_2}^{1-\al}.
\]
This gives~\eqref{eq:inter2}.

Next, for $a,b,c>0$, by Young's inequality, we have
\begin{align*}
		\dfrac{a+b^\alpha c^{1-\alpha}}{(a+b)^\alpha(a+c)^{1-\alpha}}&=\Lr{\dfrac{a}{a+b}}^\alpha\Lr{\dfrac{a}{a+c}}^{1-\alpha}+\Lr{\dfrac{b}{a+b}}^\alpha\Lr{\dfrac{c}{a+c}}^{1-\alpha}\\
		&\le\alpha\dfrac{a}{a+b}+(1-\alpha)\dfrac{a}{a+c}+\alpha\dfrac{b}{a+b}+(1-\alpha)\dfrac{c}{a+c}\\
		&=1.
\end{align*}
	This yields
	\[
		a+b^\alpha c^{1-\alpha}\le(a+b)^\alpha(a+c)^{1-\alpha}.
	\]
Let $a=\upsilon_{f,0},b=\upsilon_{f,s_1}$ and $c=\upsilon_{f,s_2}$, we obtain
\[
\nm{f}{\ms{B}^s(\R^d)}=\upsilon_{f,0}+\upsilon_{f,s}
\le\upsilon_{f,0}+\upsilon_{f,s_1}^\al\upsilon_{f,s_2}^{1-\al}\le\nm{f}{\ms{B}^{s_1}(\R^d)}^\al\nm{f}{\ms{B}^{s_2}(\R^d)}^{1-\al}.	
\]
This implies~\eqref{eq:inter3}.

Next, if we take $s_1=0$ in~\eqref{eq:inter2} and $s=(1-\alpha)s_2$ with $\alpha=1-s/s_2$, then
\[
	\nm{f}{\ms{B}^s(\mb{R}^d)}\le\upsilon_{f,0}+\upsilon_{f,0}^\alpha\upsilon_{f,s_2}^{1-\alpha}\le(1+\alpha)\upsilon_{f,0}+(1-\alpha)\upsilon_{f,s_2}\le(1+\alpha)\nm{f}{\ms{B}^{s_2}(\mb{R}^d)}.
\]
This leads to~\eqref{eq:monotone} and completes the proof.
\end{proof}

The next lemma shows that $\ms{B}^s_p(\R^d)$ is a proper subspace of $\ms{B}^s(\R^d)$.
\begin{lemma}\label{lema:ex3}
For $s\ge 0$ and $1\le p\le 2$, there holds $\ms{B}^s_p(\mb{R}^d)\hkto\ms{B}^s(\mb{R}^d)$, and the inclusion is proper in the sense that for any $1\le p<\infty$, there exists $f_p\in\ms{B}^s(\mb{R}^d)$ and $f_p\not\in L^p(\mb{R}^d)$.
\end{lemma}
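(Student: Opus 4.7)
The first assertion $\ms{B}^s_p(\R^d)\hkto\ms{B}^s(\R^d)$ is a direct consequence of the moment inequality~\eqref{eq:inter}. For $s=0$ the embedding is immediate since $\nm{f}{\ms{B}^0(\R^d)}=2\upsilon_{f,0}$ is already controlled by $\nm{f}{\ms{B}^0_p(\R^d)}$. For $s>0$, taking $s_1=0$ and $s_2=s$ in~\eqref{eq:inter} produces $\upsilon_{f,0}\le C\nm{f}{L^p(\R^d)}^{\gamma}\upsilon_{f,s}^{1-\gamma}$ with $\gamma=s/(s+d/p)$, and Young's inequality $a^{\gamma}b^{1-\gamma}\le\gamma a+(1-\gamma)b$ converts this into a bound by $\nm{f}{\ms{B}^s_p(\R^d)}$, so that $\nm{f}{\ms{B}^s(\R^d)}\le(1+C)\nm{f}{\ms{B}^s_p(\R^d)}$.

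For the properness the plan is to exhibit, for each $p\in[1,\infty)$, an explicit function $f_p$ whose Fourier transform is compactly supported with a prescribed origin singularity that forces slow physical-space decay. The candidate is the modified Bochner-Riesz multiplier
\[
\wh{f}_p(\xi){:}=\abs{\xi}^{-(d-d/p)}(1-\abs{\xi}^2)_+^{\delta},
\]
with $\delta>-1$ chosen according to $p$: for $p=1$ the prefactor is trivial so that $\wh{f}_1$ is the standard Bochner-Riesz multiplier, and the explicit representation~\eqref{eq:phiR} together with the Bessel asymptotic $\abs{J_{\nu}(t)}\sim t^{-1/2}$ yields $\abs{f_1(x)}$ oscillating at the rate $\abs{x}^{-(d+1)/2-\delta}$; picking $\delta\in(-1,(d-1)/2]$ then forces $f_1\not\in L^1(\R^d)$. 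For $p>1$ any $\delta>-1$ is admissible. A passage to radial coordinates reduces both $\upsilon_{f_p,0}$ and $\upsilon_{f_p,s}$ to one-dimensional Beta integrals that are finite whenever $\delta>-1$ and $s+d/p>0$, the latter being automatic for $s\ge 0$ and $p\ge 1$, so that $f_p\in\ms{B}^s(\R^d)$.

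The main step is to verify $f_p\not\in L^p(\R^d)$ for $p>1$. My plan is to compute $f_p$ in closed form via a Hankel transform. A Sonine/Weber-Schafheitlin-type evaluation of $\int_0^1 r^{\mu}(1-r^2)^{\delta}J_{\nu}(ar)\md r$ in terms of a ${}_1F_2$ generalized hypergeometric function, combined with its large-argument asymptotic, decomposes $f_p(x)$ at infinity into an algebraic term of size $\abs{x}^{-d/p}$ arising from the origin singularity $\abs{\xi}^{-(d-d/p)}$ and an oscillatory boundary contribution of size $\abs{x}^{-(d+1)/2-\delta}\cos(2\pi\abs{x}-\varphi)$ coming from the discontinuity of $(1-\abs{\xi}^2)_+^{\delta}$ at $\abs{\xi}=1$. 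Integrating $\abs{f_p}^p$ against $r^{d-1}\md r$ in radial coordinates and using the standard averaging of $\abs{\cos}^p$, the algebraic part alone contributes like $\int_{\abs{x}>1}\abs{x}^{-d}\md x=\infty$, forcing $\nm{f_p}{L^p(\R^d)}=\infty$.

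The main obstacle is precisely this asymptotic analysis of the ${}_1F_2$ representation of $f_p$ and the verification that the oscillatory boundary contribution cannot cancel the algebraic leading term; this is not elementary and I would relegate the details to the appendix, in the same spirit as the proof of~\eqref{eq:phiR}--\eqref{eq:phiRs}. A lighter alternative valid for $p>1$ bypasses closed forms entirely: the distributional identity $(\abs{\xi}^{-a})^{\vee}=c_{a,d}\abs{x}^{a-d}$ for $0<a<d$, together with the fact that convolution with a Schwartz function (coming from any smooth compactly supported replacement of the factor $(1-\abs{\xi}^2)_+^{\delta}$) preserves the asymptotic decay rate, yields $f_p(x)\sim c\abs{x}^{-d/p}$ at infinity directly and leaves only $p=1$ to be handled by the standard Bochner-Riesz example.
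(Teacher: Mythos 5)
Your argument coincides with the paper's in all essentials. For the embedding, both you and the paper apply the moment inequality~\eqref{eq:inter} with $s_1=0$, $s_2=s$ to bound $\upsilon_{f,0}$ by $\nm{f}{\ms{B}^s_p(\R^d)}$; the Young-inequality step you add is implicit there. For the properness, your candidate $\wh{f}_p(\xi)=\abs{\xi}^{-(d-d/p)}(1-\abs{\xi}^2)_+^\delta$ with $\delta=0$ is exactly the paper's $f_p=\bigl(\abs{\xi}^{-d/p'}\chi_{[0,1)}(\abs{\xi})\bigr)^\vee$ (note $d-d/p=d/p'$), and the paper indeed proceeds via the Hankel/Sonine evaluation to the ${}_1F_2$ closed form~\eqref{eq:ex3} in Appendix~\ref{apd:ex3} and then invokes the large-argument asymptotics of ${}_1F_2$ from~\cite{MathaiSaxena:1973}, precisely as you anticipate. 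You rightly flag that an $\mc{O}$-type upper bound alone would not give $f_p\notin L^p$; the paper's ``$\simeq\mc{O}$'' is meant as a two-sided estimate, and since the algebraic $\abs{x}^{-d/p}$ term is nonoscillatory while the $\abs{x}^{-(d+1)/2}$ boundary term oscillates, cancellation cannot occur, but the paper does not spell this out. Your smooth-cutoff alternative for $p>1$ (replace the sharp cutoff by a smooth bump $\psi$ with $\psi(0)\neq 0$, so $f_p=c_{d,p}\,\abs{x}^{-d/p}*\psi^\vee\sim c\abs{x}^{-d/p}$ at infinity with no oscillatory boundary contribution) is a genuinely lighter route that bypasses the ${}_1F_2$ asymptotics and the no-cancellation check; the paper does not take it, and adopting it would shorten the appendix at the cost of losing the explicit representation~\eqref{eq:ex3}, which the paper also uses to illustrate special cases.
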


\begin{proof}
It follows from the moment inequality~\eqref{eq:inter} that $\upsilon_{f,0}
\le C\nm{f}{\ms{B}^s_p(\R^d)}$. Hence 
\[
\nm{f}{\ms{B}^s(\R^d)}\le C\nm{f}{\ms{B}^s_p(\R^d)}.
\]
This implies $\ms{B}^s_p(\mb{R}^d)\hkto\ms{B}^s(\mb{R}^d)$ for any $s\ge 0$ and $1\le p\le 2$. 

It remains to prove that the inclusion is proper. Let
\[
f_p(x){:}=\Lr{\abs{\xi}^{-d/p'}\chi_{[0,1)}(\abs{\xi})}^\vee(x),
\]
where $\chi_{\Omega}(t)$ is the characteristic function on $\mb{R}$ that equals to one if $t\in\Omega$ and zero otherwise. For any $s\ge 0$, a straightforward calculation gives
\[
\upsilon_{f_p,s}=\dfrac{\omega_{d-1}}{s+d/p}.
\]
Hence, $f_p\in\ms{B}^s(\mb{R}^d)$. 

What is left is to show that $f_p\notin L^p(\R^d)$, which is based on the following explicit formula for $f_p$ shown in Appendix~\ref{apd:ex3}:
\begin{equation}\label{eq:ex3}
f_p(x)={}_1F_2(d/(2p);1+d/(2p),d/2;-\pi^2\abs{x}^2)p\nu_d,
\end{equation}
where the generalized Hypergeometric function ${}_nF_m$ is defined as follows. For nonnegative integer $n,m$ and none of the parameters $\{\beta_j\}_{j=1}^m$ is a negative integer or zero,
\[
{}_nF_m(\alpha_1,\dots,\alpha_n;\beta_1,\dots,\beta_m;x){:}=\sum_{k=0}^\infty\dfrac{\prod_{j=1}^n(\alpha_j)_k}{\prod_{j=1}^m(\beta_j)_k}\dfrac{x^k}{k!}.
\]
The generalized Hypergeometric function ${}_nF_m$ converges for all finite $x$ if $n\le m$. In particular ${}_nF_m(\alpha_1,\dots,\alpha_n;\beta_1,\dots,\beta_m;0)=1$. Hence $f_p(x)$ is finite for any $x$. Using~\cite{MathaiSaxena:1973}*{Appendix}, we obtain
\[
		{}_1F_2(\alpha;\beta,\gamma;-x^2/4)\simeq\mc{O}(\abs{x}^{\alpha-\beta-\gamma+1/2}+\abs{x}^{-2\alpha})\qquad\text{when}\quad\abs{x}\to\infty.
\]
Therefore,
\[
f_p(x)\simeq\mc{O}(\abs{x}^{-(d+1)/2}+\abs{x}^{-d/p})\qquad\text{when}\quad \abs{x}\to\infty. 
\]
This immediately implies $f_p\not\in L^p(\mb{R}^d)$.
\end{proof}
\begin{remark}
The representation~\eqref{eq:ex3} is rather complicated, we give explicit formulas for certain special cases. For $d=p=1$, using the relation~\cite[\S~6.2.1, Eq.(10)]{Luke:1969}
\[
\sin x={}_0F_1(;3/2;-x^2/4)x,
\]
we obtain
\[
f_1(x)=2{}_1F_2(1/2;3/2,1/2;-\pi^2x^2)=2{}_0F_1(;3/2;-\pi^2x^2)=\dfrac{\sin(2\pi x)}{\pi x}.
\]

When $p=2$, using the identity~\cite[\S~6.2.11, Eq. (41)]{Luke:1969}
\[
C(\sqrt{2x/\pi})=\sqrt{\dfrac{2x}{\pi}}{}_1F_2(1/4;5/4,1/2;-x^2/4)\qquad\text{when}\quad x>0,
\]
we obtain
\[
f_2(x)=4{}_1F_2(1/4;5/4,1/2;-\pi^2x^2)=\dfrac2{\sqrt{\abs{x}}}C(2\sqrt{\abs{x}}).
\]
where $C$ is the Fresnel Cosine integral given by
\[
C(x)=\int_0^x\cos(\pi t^2/2)\mr{d}t\to\dfrac12\qquad\text{when}\quad x\to\infty.
\]
\end{remark}
\subsection{Relations to some classical function spaces}
In this part, we establish the embedding between the spectral Barron space $\ms{B}^s(\mb{R}^d)$ and the Besov space, and hence we bridge $\ms{B}^s(\mb{R}^d)$ and the Sobolev space as in~\cite{MengMing:2022}. Let us begin with the definition of the Fourier-analytical based Besov space~\cite{Triebel:1983}.
\begin{definition}[Besov space]
Let $\{\varphi_j\}_{j=0}^\infty\subset\ms{S}(\mb{R}^d)$ satisfies $0\le\varphi_j\le 1$ and
\[\begin{cases}
		\text{supp}(\varphi_0)\subset\Gamma_0{:}=\set{x\in\mb{R}^d}{\abs{x}\le2},\\
		\text{supp}(\varphi_j)\subset\Gamma_j{:}=\set{x\in\mb{R}^d}{2^{j-1}\le\abs{x}\le 2^{j+1}},\qquad j=1,2,\dots.
\end{cases}
\]
For every multi-index $\alpha$, there exists a positive number $c_{\alpha}$ such that
\[
2^{j\abs{\alpha}}\abs{\nabla^{\alpha}\varphi_j(x)}\le c_{\alpha}\quad\text{for all}\quad j=0,\dots,\quad\text{for all}\quad x\in\mb{R}^d,
\]
	and
	\[
	\sum_{j=0}^\infty\varphi_j(x)=1\quad\text{for every}\quad x\in\mb{R}^d.
	\]
	Let $\alpha\in\mb{R}$ and $1\le p,q\le\infty$. Define the {\em Besov space}
	\[
		B^{\alpha}_{p,q}(\mb{R}^d){:}=\set{f\in\ms{S}'(\mb{R}^d)}{\nm{f}{B^{\alpha}_{p,q}(\mb{R}^d)}<\infty}
	\]
	equipped with the norm
	\[
		\nm{f}{B^{\alpha}_{p,q}(\mb{R}^d)}{:}=\Lr{\sum_{j=0}^\infty2^{\alpha qj}\nm{(\varphi_j\wh{f})^{\vee}}{L^p(\mb{R}^d)}^q}^{1/q}\qquad\text{when}\quad q<\infty,
	\]
	and
	\[
		\nm{f}{B_{p,\infty}^\alpha(\mb{R}^d)}{:}=\sup_{j\ge 0}2^{\alpha j}\nm{(\varphi_j\wh{f})^{\vee}}{L^p(\mb{R}^d)}.
	\]
\end{definition}

Firstly we recall some well-known facts about the embedings of  Besov spaces, which was firstly proved by Taibleson in the series of work~\cite{Taibleson:1964,Taibleson:1965,Taibleson:1966}. We retain the proof in Appendix~\ref{apd:Besov} for the readers' convenience.
\begin{lemma}\label{lema:Besov}
There holds $B_{p_1,q_1}^{\alpha_1}(\mb{R}^d)\hookrightarrow B_{p_2,q_2}^{\alpha_2}(\mb{R}^d)$ if and only if $p_1\le p_2$ and one of the following conditions holds:
	\begin{enumerate}
		\item $\alpha_1-d/p_1>\alpha_2-d/p_2$ and $q_1,q_2$ are arbitrary;
		\item $\alpha_1-d/p_1=\alpha_2-d/p_2$ and $q_1\le q_2$.
	\end{enumerate}
\end{lemma}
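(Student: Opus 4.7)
The plan is to split the proof into sufficiency (the two conditions imply the embedding) and necessity (the embedding forces the conditions). The workhorse throughout is the Bernstein inequality for band-limited functions, applied termwise to the Littlewood--Paley pieces $\{(\varphi_j\wh f)^\vee\}_{j\ge 0}$.

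For sufficiency, I would first record the Bernstein inequality: if $g\in L^{p_1}(\R^d)$ has $\wh g$ supported in $\{\abs{\xi}\le R\}$ and $p_1\le p_2$, then $\nm{g}{L^{p_2}(\R^d)}\lesssim R^{d(1/p_1-1/p_2)}\nm{g}{L^{p_1}(\R^d)}$. Since the Fourier support of $(\varphi_j\wh f)^\vee$ lies in $\Gamma_j\subset\{\abs{\xi}\le 2^{j+1}\}$, this yields
\[
2^{j\alpha_2}\nm{(\varphi_j\wh f)^\vee}{L^{p_2}(\R^d)}\;\lesssim\; 2^{-j\sigma}\bigl(2^{j\alpha_1}\nm{(\varphi_j\wh f)^\vee}{L^{p_1}(\R^d)}\bigr),
\]
with $\sigma{:}=(\alpha_1-d/p_1)-(\alpha_2-d/p_2)$. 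In case (2) one has $\sigma=0$ and the conclusion reduces to the sequence embedding $\ell^{q_1}\hookrightarrow\ell^{q_2}$, which holds precisely because $q_1\le q_2$. In case (1) one has $\sigma>0$, so the geometric factor $2^{-j\sigma}$ is summable; one concludes via the trivial $\ell^{q_1}\hookrightarrow\ell^{q_2}$ inclusion when $q_1\le q_2$, and via H\"older's inequality with $1/q_2=1/q_1+1/r$ when $q_1>q_2$, with the H\"older factor absorbed by the geometric tail.

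For necessity, I would produce three families of test functions that saturate the potential failure modes. Fix a Schwartz bump $\psi$ whose Fourier transform is supported in $\{1/2\le\abs{\xi}\le 2\}$ and set $\psi_j(x){:}=\psi(2^j x)$; then $\psi_j$ is essentially concentrated in a single Paley shell $\Gamma_j$ with $\nm{\psi_j}{B^\alpha_{p,q}(\R^d)}\simeq 2^{j(\alpha-d/p)}$. Letting $j\to\infty$ rules out $\alpha_1-d/p_1<\alpha_2-d/p_2$. For the index $p$, I would consider $f_N=\sum_{k=1}^N\psi_j(\cdot-x_k)$ with widely separated centers $x_k$, whose Besov norm scales like $2^{j(\alpha-d/p)}N^{1/p}$; letting $N\to\infty$ forces $p_1\le p_2$. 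In the remaining borderline case $\alpha_1-d/p_1=\alpha_2-d/p_2$ with $q_1>q_2$, I would take a diagonal sum $\sum_k c_k\psi_{j_k}$ with rapidly increasing scales $j_k$, so that the Besov norm collapses to $\nm{(c_k 2^{j_k(\alpha-d/p)})_k}{\ell^q}$; choosing $c_k$ so that this sequence lies in $\ell^{q_1}\setminus\ell^{q_2}$ supplies the counterexample.

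The hard part will be to track the Bernstein constants uniformly in $j$ and independently of the chosen partition $\{\varphi_j\}$, together with the slightly delicate H\"older step in case (1) when $q_1>q_2$. Once these are in hand the remainder reduces to standard geometric-series estimates and sequence-space inclusions.
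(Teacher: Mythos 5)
Your proposal is correct and, at the level of strategy, matches the standard argument and the paper's: Bernstein--Nikol'skii on the Littlewood--Paley blocks for sufficiency, and band-limited test families for necessity. The paper's written proof is much shorter only because it cites Triebel for the entire ``if''-part and again for the endpoint case $\alpha_1-d/p_1=\alpha_2-d/p_2\Rightarrow q_1\le q_2$, whereas you fill both in; your geometric-series/H\"older step in case (1) and the diagonal sum $\sum_k c_k\psi_{j_k}$ for the $q$-endpoint are exactly the right arguments. The one place where you genuinely diverge is the necessity of $p_1\le p_2$. The paper extends a single dilation family $f_n(x)=f_0(2^{-n}x)$ to negative integers $n$, so that $\wh f_n$ sits entirely inside the low-frequency block and $\nm{f_n}{B^\alpha_{p,q}(\mb{R}^d)}=\nm{f_n}{L^p(\mb{R}^d)}=2^{-dn/p}\nm{f_0}{L^p(\mb{R}^d)}$; sending $n\to-\infty$ forces $p_1\le p_2$, and the same family with $n\to+\infty$ handles the $\alpha-d/p$ constraint, so one computation does double duty. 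You instead fix a frequency scale and sum $N$ widely separated translates so the $L^p$ norm scales like $N^{1/p}$, then send $N\to\infty$. Both are legitimate; the paper's version avoids any separation estimate and gives exact equalities, while in yours you should note explicitly that the separation of the centers $x_k$ must grow with $N$ so the Schwartz tails contribute only a negligible correction to $\nm{f_N}{L^p(\mb{R}^d)}^p\simeq N\nm{\psi_j}{L^p(\mb{R}^d)}^p$ --- the naive upper bound from the triangle inequality is only $N\nm{\psi_j}{L^p}$, which is not what you want. Neither route is uniformly better; the paper's is a little cleaner here, while yours is more self-contained since it does not defer two of the three claims to a reference.
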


The main result of the embedding is:
\begin{theorem}\label{thm:Besov}
\begin{enumerate}
\item There holds 
\begin{equation}\label{eq:besov}
B_{2,1}^{s+d/2}(\mb{R}^d)\hkto\ms{B}^s(\mb{R}^d)\hkto B_{\infty,1}^s(\mb{R}^d)
\end{equation}
with
\begin{equation}\label{eq:thmBesovC}
   2^{-s}\nm{f}{B_{\infty,1}^s(\mb{R}^d)}\le\nm{f}{\ms{B}^s(\mb{R}^d)}\le 2^{s+1+d/2}\sqrt{\nu_d}\nm{f}{B_{2,1}^{s+d/2}(\mb{R}^d)}.
\end{equation}
\item The above embedding is optimal in the sense that $B^\alpha_{p,q}(\mb R^d)\hkto\ms B^s(\mb R^d)$ if and only if $B^\alpha_{p,q}(\mb R^d)\hkto B^{s+d/2}_{2,1}(\mb R^d)$, and $\ms B^s(\mb R^d)\hkto B^\alpha_{p,q}(\mb R^d)$ if and only if $B^s_{\infty,1}(\mb R^d)\hkto B^\alpha_{p,q}(\mb R^d)$.
	\end{enumerate}
\end{theorem}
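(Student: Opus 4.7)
The natural tool is the Littlewood--Paley decomposition $\wh{f}=\sum_{j\ge0}\varphi_j\wh{f}$. For $\ms B^s(\mb R^d)\hkto B^s_{\infty,1}(\mb R^d)$, I start from $\|(\varphi_j\wh f)^\vee\|_{L^\infty}\le\|\varphi_j\wh f\|_{L^1}$ and note that on $\mr{supp}(\varphi_j)$ with $j\ge 1$ one has $|\xi|\ge 2^{j-1}$, so $2^{sj}\le 2^s|\xi|^s$; summing in $j$ and using $\sum_{j\ge0}\varphi_j(\xi)=1$ yields $\|f\|_{B^s_{\infty,1}}\le\upsilon_{f,0}+2^s\upsilon_{f,s}\le 2^s\|f\|_{\ms B^s}$, the left half of~\eqref{eq:thmBesovC}. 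For $B^{s+d/2}_{2,1}(\mb R^d)\hkto\ms B^s(\mb R^d)$, I use on $\mr{supp}(\varphi_j)$ the pointwise bound $(1+|\xi|^s)\le 2^{s(j+1)+1}$ and the measure bound $|\mr{supp}(\varphi_j)|\le\nu_d\,2^{d(j+1)}$; applying Cauchy--Schwarz to pass from $L^1$ to $L^2$ and then Plancherel gives
\[
\int(1+|\xi|^s)|\varphi_j\wh f|\,\md\xi\le 2\sqrt{\nu_d}\,2^{(s+d/2)(j+1)}\|(\varphi_j\wh f)^\vee\|_{L^2},
\]
and summing reproduces the right half of~\eqref{eq:thmBesovC}.

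\textbf{Plan for Part (2), first equivalence.} The direction $B^\alpha_{p,q}\hkto B^{s+d/2}_{2,1}\Rightarrow B^\alpha_{p,q}\hkto\ms B^s$ follows from Part (1) by composition. Conversely, assume $B^\alpha_{p,q}\hkto\ms B^s$. Composing with $\ms B^s\hkto B^s_{\infty,1}$ and invoking Lemma~\ref{lema:Besov} yields the condition $\alpha-d/p>s$, or $\alpha-d/p=s$ with $q\le 1$, which matches that required for $B^\alpha_{p,q}\hkto B^{s+d/2}_{2,1}$; what remains is $p\le 2$. To exclude $p>2$, I use a random-signs construction. Fix $\phi\in C_c^\infty(\mb R^d)$ supported in the unit ball, put $f_n(x)=\sum_{k=1}^n\epsilon_k\phi(x-2ke_1)$ with $\epsilon_k\in\{\pm 1\}$. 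Since the translates are disjointly supported, $\|f_n\|_{L^p}=n^{1/p}\|\phi\|_{L^p}$, and the same splitting controls any Sobolev norm of $f_n$; consequently $\|f_n\|_{B^\alpha_{p,q}}\lesssim n^{1/p}$ for each fixed $\alpha,q$. On the other hand, writing $\wh{f_n}(\xi)=\wh\phi(\xi)\sum_k\epsilon_ke^{-4\pi ik\xi_1}$ and applying Khintchine's inequality together with the probabilistic method yields a deterministic choice of signs for which $\upsilon_{f_n,0}\gtrsim\sqrt{n}\,\upsilon_{\phi,0}$. Hence the ratio $\|f_n\|_{\ms B^s}/\|f_n\|_{B^\alpha_{p,q}}\gtrsim n^{1/2-1/p}$ blows up when $p>2$, contradicting the assumed embedding.

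\textbf{Plan for Part (2), second equivalence.} The reverse direction is again a chain through $\ms B^s\hkto B^s_{\infty,1}$. For the forward direction, Lemma~\ref{lema:Besov} forces $p=\infty$ and $\alpha\le s$ (the $q$-condition $1\le q$ is automatic), and I extract these from $\ms B^s\hkto B^\alpha_{p,q}$ by two scaling tests. A dilation $f_t(x)=\phi(x/t)$ for a fixed Schwartz $\phi$ with $\wh\phi$ supported in a small ball around $0$ gives $\upsilon_{f_t,0}=\upsilon_{\phi,0}$ and $\upsilon_{f_t,s}=t^{-s}\upsilon_{\phi,s}$, so $\|f_t\|_{\ms B^s}$ stays bounded as $t\to\infty$; meanwhile $\wh{f_t}(\xi)=t^d\wh\phi(t\xi)$ is eventually supported inside $\mr{supp}(\varphi_0)$, which forces $\|f_t\|_{B^\alpha_{p,q}}\simeq\|f_t\|_{L^p}=t^{d/p}\|\phi\|_{L^p}\to\infty$ whenever $p<\infty$, ruling out such $p$. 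A modulation $g_R(x)=\phi(x)e^{2\pi iRx_1}$ (with $\wh\phi$ supported in a ball of radius $1/10$) satisfies $\|g_R\|_{\ms B^s}\simeq R^s\upsilon_{\phi,0}$, and for $R=2^{j_R}$ the concentration of $\wh{g_R}$ near $Re_1$ inside a single dyadic annulus yields $\|g_R\|_{B^\alpha_{\infty,q}}\gtrsim R^\alpha\|\phi\|_{L^\infty}$; the resulting inequality $R^\alpha\lesssim R^s$ for all $R$ forces $\alpha\le s$.

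\textbf{Main obstacle.} The subtle step is forcing $p\le 2$ in the first equivalence. The Khintchine-based construction demands care on two fronts: verifying the Besov upper bound $\|f_n\|_{B^\alpha_{p,q}}\lesssim n^{1/p}$ (by dominating by a Sobolev norm that splits over disjoint supports), and producing by the probabilistic method a single deterministic choice of signs for which $\upsilon_{f_n,0}$ actually realizes the Khintchine lower bound $\sqrt{n}\,\upsilon_{\phi,0}$. The other scaling arguments (dilation, modulation) are more routine.
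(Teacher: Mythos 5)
Your Part (1) is essentially the paper's argument: Littlewood--Paley plus Cauchy--Schwarz/Plancherel for the $B^{s+d/2}_{2,1}$ side, and Hausdorff--Young plus the frequency-support lower bound $|\xi|\ge 2^{j-1}$ for the $B^s_{\infty,1}$ side. The constant-tracking matches line for line.

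For Part (2) you follow the same overall strategy as the paper (chain through Part (1), reduce to explicit conditions via Lemma~\ref{lema:Besov}, then kill the remaining parameter ranges by counterexamples), but your counterexamples are genuinely different. To exclude $p>2$ in the first equivalence the paper uses the complex-parameter Gaussian $\psi_n(x)=(1+in)^{-d/2}e^{-\pi|x|^2/(1+in)}$ (with the computation outsourced to Appendix~\ref{apd:ex5}, and a separate constant-function argument for $p=\infty$), whereas you use a Rademacher sum of disjointly supported bumps plus Khintchine's lower $L^1$-moment estimate and the probabilistic method; this handles $2<p\le\infty$ uniformly in one stroke. For the second equivalence the paper reuses the hypergeometric example $f_p$ from Lemma~\ref{lema:ex3} (which lies in $\ms B^s$ but not $L^p$), whereas you obtain $p=\infty$ by a pure dilation test and $\alpha\le s$ by a pure modulation test. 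Your two scaling arguments are more elementary and self-contained, avoiding both the special-function asymptotics behind $f_p$ and the fractional-Besov computations behind $\psi_n$; the price is that you invoke Khintchine and must verify the Sobolev-to-Besov embedding $W^m_p\hkto B^\alpha_{p,q}$ for $m>\alpha$ over the full range $1\le p\le\infty$, which you flagged as the delicate point. Both routes are sound; yours is perhaps cleaner to present, while the paper's reuses material it has already built.

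One small presentational caveat: in the dilation test you write $\|f_t\|_{B^\alpha_{p,q}}\simeq\|f_t\|_{L^p}$; once $t$ is large enough that $\mathrm{supp}\,\wh{f_t}\subset\{|\xi|<1\}$, where $\varphi_0\equiv 1$ and all $\varphi_j$ with $j\ge 1$ vanish, this is in fact an equality, so the claim is fine and you may as well state it that way.
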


\begin{remark}
By tracking the embedding constant in~\eqref{eq:thmBesovC}, we observe that it is indenpendent of the dimension $d$ because
\[
2^{s+1+d/2}\sqrt{\nu_d}\sim\dfrac{2^{s+1}}{(d\pi)^{1/4}}\lr{\dfrac{8\pi e}{d}}^{d/4}\to 0\qquad\text{as}\quad d\to\infty.
\]
This suggests that the embedding is effective in high dimension. It also indicates that the spectral Barron space is more suitable than the Besov space as a target function space for neural network approximation because
\[
\nm{f}{\ms{B}(\mb{R}^d)}\le Cd^{-(d+1)/4}\nm{f}{B^{s+d/2}_{2,1}(\mb{R}^d)}\qquad\text{for any\quad}f\in B^{s+d/2}_{2,1}(\mb{R}^d)
\]
with a universal constant $C$.
\end{remark}

\begin{proof}[Proof of Theorem~\ref{thm:Besov}]
	To prove (1), firstly, for any $f\in\ms{B}^s(\mb{R}^d)$,
	\begin{align*}
		\nm{f}{\ms{B}^s(\mb{R}^d)}=&\sum_{j=0}^\infty\int_{\mb{R}^d}(1+\abs{\xi}^s)\varphi_j(\xi)\abs{\wh{f}(\xi)}\mr{d}\xi\\
		\le&\sum_{j=0}^\infty\Lr{\int_{\text{supp\;}\varphi_j}(1+\abs{\xi}^s)^2\mr{d}\xi}^{1/2}\nm{\varphi_j\wh{f}}{L^2(\mb{R}^d)}.
	\end{align*}
A direct calculation gives: for $j=0,1,\dots$,
\begin{align*}
\int_{\text{supp\;}\varphi_j}(1+\abs{\xi}^s)^2\mr{d}\xi
&\le\int_{0\le\abs{\xi}\le 2^{j+1}}(1+\abs{\xi}^s)^2\mr{d}\xi\\
&=\omega_{d-1}\int_0^{2^{j+1}}(1+r^s)^2r^{d-1}\mr{d}\,r\\
&\le 2\omega_{d-1}\int_0^{2^{j+1}}(1+r^{2s})r^{d-1}\mr{d}\,r\\
&\le2\omega_{d-1}\Lr{\dfrac{2^{(j+1)d}}{d}+\dfrac{2^{(j+1)(2s+d)}}{2s+d}}\\
&\le 4\nu_d2^{(j+1)(2s+d)}.
\end{align*}
Using the Plancherel's theorem, we get
\begin{align*}
\nm{f}{\ms{B}^s(\mb{R}^d)}&\le 2^{s+1+d/2}\sqrt{\nu_d}
\sum_{j=0}^\infty2^{j(s+d/2)}\nm{\varphi_j\wh{f}}{L^2(\mb{R}^d)}\\
&=2^{s+1+d/2}\sqrt{\nu_d}
\sum_{j=0}^\infty2^{j(s+d/2)}\nm{(\varphi_j\wh{f})^\vee}{L^2(\mb{R}^d)}\\
&= 2^{s+1+d/2}\sqrt{\nu_d}\nm{f}{B_{2,1}^{s+d/2}(\mb{R}^d)}.
\end{align*}

Next, for any $f\in \ms{B}^s(\mb{R}^d)$, by Lemma~\ref{lema:fourinv}, we have $\varphi_j\wh{f}\in L^1(\mb{R}^d)$, using the Hausdorff-Young inequality~\eqref{eq:hyineq}, we obtain
\begin{align*}
		\nm{f}{B_{\infty,1}^s(\mb{R}^d)}=&\sum_{j=0}^\infty2^{sj}\nm{(\varphi_j\wh{f})^\vee}{L^\infty(\mb{R}^d)}\le\sum_{j=0}^\infty2^{sj}\nm{\varphi_j\wh{f}}{L^1(\mb{R}^d)}\\
		\le&\nm{\varphi_0\wh{f}}{L^1(\mb{R}^d)}+2^s\sum_{j=1}^\infty\int_{\mb{R}^d}\varphi_j(\xi)\abs{\xi}^s\abs{\wh{f}(\xi)}\mr{d}\xi\\
\le&\upsilon_{f,0}+2^s\upsilon_{f,s}.
\end{align*}
Therefore, $\nm{f}{B_{\infty,1}^s(\mb{R}^d)}\le2^s\nm{f}{\ms{B}^s(\mb{R}^d)}$. This proves~\eqref{eq:besov} with
\[
2^{-s}\nm{f}{B_{\infty,1}^s(\mb{R}^d)}\le\nm{f}{\ms{B}^s(\mb{R}^d)}\le 2^{s+1+d/2}\sqrt{\nu_d}\nm{f}{B_{2,1}^{s+d/2}(\mb{R}^d)}.
\]

It remains to show that the embedding~\eqref{eq:besov} is optimal. The ``if''-part is apparent, it suffices to show the ``only if''-part. On the one hand, suppose
\(
B_{p,q}^\alpha(\mb{R}^d)\hookrightarrow\ms{B}^s(\mb{R}^d),
\) one would have $B_{p,q}^\alpha(\mb{R}^d)\hookrightarrow B_{\infty,1}^s(\mb{R}^d)$. Using Lemma~\ref{lema:Besov}, we conclude that the triple $(p,q,\alpha)$ must be in the set
\begin{equation}\label{eq:thmBesovL}
    S_1{:}=\set{p,q\in[1,\infty],\alpha\in\mb R}{\alpha>s+d/p,\text{ or }\alpha=s+d/p\text{ and }q=1}.
\end{equation}
We split this set into three subsets. 

First, if $(p,q,\alpha)\in S_1$ with $1\le p\le 2$, then by Lemma~\ref{lema:Besov} we obtain $B_{p,q}^\alpha(\mb{R}^d)\hkto B_{2,1}^{s+d/2}(\mb R^d)$. This proves the first assertion. 

Second, if $(p,q,\alpha)\in S_1$ with $2< p<\infty$, then we shall exploit an example adopted from~\cite[Ch. 5, Ex. 9]{Lieb:2001} to show that $B_{p,q}^\alpha(\mb{R}^d)\not\hkto\ms{B}^s(\mb{R}^d)$. Let 
\[
\psi_n(x)=(1+in)^{-d/2}e^{-\pi\abs{x}^2/(1+in)}.
\]
A direct calculation gives $\wh{\psi}_n(\xi)=e^{-\pi(1+in)\abs{\xi}^2}$. Hence 
$\abs{\wh{\psi}_n(\xi)}=e^{-\pi\abs{\xi}^2}\in\ms{S}(\mb{R}^d)$ and
\[
\nm{\psi_n}{\ms{B}^s(\mb{R}^d)}=1+\dfrac{\Gamma((s+d)/2)}{\Gamma(d/2)\pi^{s/2}},
\]
which is independent of $n$. We shall show in Appendix~\ref{apd:ex5} that there exists $C$ independent of $n$ such that
\begin{equation}\label{eq:ex5}
\nm{\psi_n}{B_{p,q}^\alpha(\mb{R}^d)}\le C(1+n^2)^{-d(p-2)/(4p)}
\end{equation}
when $2\le p<\infty$ and $\alpha>0$. Therefore, $\nm{\psi_n}{B_{p,q}^\alpha(\mb{R}^d)}\to 0$ when $p>2$ and $n\to\infty$. This proves $B_{p,q}^\alpha(\mb{R}^d)\not\hkto\ms{B}^s(\mb{R}^d)$.

Finally, if $(p,q,\alpha)\in S_1$ with $p=\infty$. Note that $\varphi_j(x)=\delta_{0j}$ when  $\abs{x}<1$.
A direct calculation gives
\[
\nm{1}{B^{\alpha}_{\infty,q}(\mb R^d)}=\nm{(\varphi_0\wh{1})^\vee}{L^\infty(\mb R^d)}=\nm{1}{L^\infty(\mb R^d)}=1,
\]
while it is clear that the constant function $1\not\in\ms B^s(\mb R^d)$. This implies $B^\alpha_{\infty,q}(\mb R^d)\not\hkto\ms B^s(\mb R^d)$.

Summing up the above three cases, we conclude the first part of the assertion (2).

On the other hand, suppose
\(
    \ms B^s(\mb R^d)\hkto B^\alpha_{p,q}(\mb R^d),
\)
one would have $B^{s+d/2}_{2,1}(\mb R^d)\hkto B^\alpha_{p,q}(\mb R^d)$. Invoking Lemma~\ref{lema:Besov} again, we conclude that the index $(p,q,\alpha)$ must be in the set
\begin{equation}\label{eq:thmBesovR}
   S_2{:}=\set{p\in [2,\infty],q\in[1,\infty],\alpha\in\mb R}{\alpha\le s+d/p}.
\end{equation}
Again we split this set into two subsets.

First, if $(p,q,\alpha)\in S_2$ with $2\le p<\infty$, then we have proved that $f_p$ constructed in Lemma~\ref{lema:ex3} satisfying $f_p\in\ms B^s(\mb R^d)$ while $f_p\not\in L^p(\mb R^d)$. A direct calculation gives
\[
\nm{f_p}{B^\alpha_{p,q}(\mb R^d)}=\nm{(\varphi_0\wh{f}_p)^\vee}{L^p(\mb R^d)}=\nm{f_p}{L^p(\mb R^d)}.
\]
Hence $f_p\not\in B^\alpha_{p,q}(\mb R^d)$ and it yields $\ms B^s(\mb R^d)\not\hkto B^\alpha_{p,q}(\mb R^d)$.

Second, if $(p,q,\alpha)\in S_2$ with $p=\infty$. As expected, Lemma~\ref{lema:Besov} yields that $B^s_{\infty,1}(\mb R^d)\hkto B^\alpha_{\infty,q}(\mb R^d)$. Hence we prove the second part of the assertion (2). Therefore, we conclude that the embedding~\eqref{eq:besov} is optimal.
\end{proof}

As a consequence of Theorem~\ref{thm:Besov} and Lemma~\ref{lema:ex3}, we establish the embedding between the spectral Barron space and the Sobolev spaces. 
\begin{definition}[Fractional Sobolev space]
	Let $1\le p<\infty$ and non-integer $\alpha>0$, then the fractional Sobolev space
	\[
		W^\alpha_p(\mb{R}^d){:}=\set{f\in W^{\lfloor\alpha\rfloor}_p(\mb{R}^d)}{\iint_{\mb{R}^d\times\mb{R}^d}\dfrac{\abs{\nabla^{\lfloor\alpha\rfloor}f(x)-\nabla^{\lfloor\alpha\rfloor}f(y)}^p}{\abs{x-y}^{d+(\alpha-\lfloor\alpha\rfloor)p}}\mr{d}x\mr{d}y<\infty}
	\]
	equipped with the norm
	\[
		\nm{f}{W^\alpha_p(\mb{R}^d)}{:}=\nm{f}{W^{\lfloor\alpha\rfloor}_p(\mb{R}^d)}+\Lr{\iint_{\mb{R}^d\times\mb{R}^d}\dfrac{\abs{\nabla^{\lfloor\alpha\rfloor}f(x)-\nabla^{\lfloor\alpha\rfloor}f(y)}^p}{\abs{x-y}^{d+(\alpha-\lfloor\alpha\rfloor)p}}\mr{d}x\mr{d}y}^{1/p}.
	\]
\end{definition}

It is a straightforward corollary of the following  embedding relation between the Sobolev space and $\ms{B}^s_p(\R^d)$.
\begin{lemma}[\cite{MengMing:2022}*{Theorem 4.3}]\label{lema:BarronSobolev}
\begin{enumerate}
\item If $1\le p\le 2$ and $\al>s+d/p>0$, then
\[
W^{\al}_p(\R^d)\hkto\ms{B}^s_p(\R^d).
\]
\item If $s>-d$ is not an integer or $s>-d$ is an integer and $d\ge 2$, then
\[
W^{s+d}_1(\R^d)\hkto\ms{B}^s_1(\R^d).
\]
\end{enumerate}\end{lemma}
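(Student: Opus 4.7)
Both parts reduce to bounding the spectral seminorm $\upsilon_{f,s}=\int_{\mb R^d}\abs{\xi}^s\abs{\wh f(\xi)}\mr{d}\xi$ by the Sobolev norm, since the $L^p$ bound $\nm{f}{L^p(\mb R^d)}\le\nm{f}{W^{\al}_p(\mb R^d)}$ is immediate from the definition of $W^{\al}_p$.

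For part (1), I would use a direct H\"older--Hausdorff-Young argument. Insert the Bessel multiplier $(1+\abs{\xi}^2)^{\al/2}$ and split
\[
\upsilon_{f,s}=\int_{\mb R^d}\dfrac{\abs{\xi}^s}{(1+\abs{\xi}^2)^{\al/2}}\cdot(1+\abs{\xi}^2)^{\al/2}\abs{\wh f(\xi)}\mr{d}\xi,
\]
and apply H\"older's inequality with conjugate exponents $(p,p')$. A polar-coordinate check shows the weight $w(\xi){:}=\abs{\xi}^s(1+\abs{\xi}^2)^{-\al/2}$ lies in $L^p(\mb R^d)$ precisely when $sp>-d$ (integrability at the origin) and $(\al-s)p>d$ (integrability at infinity), both ensured by the hypothesis $\al>s+d/p>0$. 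For the remaining factor, Hausdorff-Young~\eqref{eq:hyineq} (valid since $1\le p\le 2$) yields $\nm{(1+\abs{\cdot}^2)^{\al/2}\wh f}{L^{p'}(\mb R^d)}\le\nm{(I-\Delta)^{\al/2}f}{L^p(\mb R^d)}$, which is then controlled by $C\nm{f}{W^{\al}_p(\mb R^d)}$ via the standard continuous inclusion of $W^{\al}_p$ into the Bessel potential space $H^{\al}_p$ for $1\le p\le 2$.

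For part (2), the endpoint $p=1$, $\al=s+d$ is critical: the weight $w$ just fails to be integrable at infinity (it behaves like $\abs{\xi}^{-d}$), so the H\"older argument of part~(1) diverges. When $s+d$ is non-integer, I would use the Slobodeckij--Besov identification $W^{s+d}_1(\mb R^d)=B^{s+d}_{1,1}(\mb R^d)$ together with a Littlewood-Paley decomposition of $\wh f$: on $\mathrm{supp}\,\varphi_j$ one has $\abs{\xi}^s\lesssim 2^{js}$ and $\abs{\mathrm{supp}\,\varphi_j}\lesssim 2^{jd}$, so
\[
\upsilon_{f,s}\lesssim\sum_{j\ge 0}2^{js}\nm{\varphi_j\wh f}{L^1(\mb R^d)}\lesssim\sum_{j\ge 0}2^{j(s+d)}\nm{\varphi_j\wh f}{L^\infty(\mb R^d)}\lesssim\sum_{j\ge 0}2^{j(s+d)}\nm{(\varphi_j\wh f)^\vee}{L^1(\mb R^d)}=\nm{f}{B^{s+d}_{1,1}(\mb R^d)},
\]
where the third step uses the Hausdorff-Young bound $\nm{\wh g}{L^\infty}\le\nm{g}{L^1}$. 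When $s+d$ is an integer, this identification fails, but one can first apply the classical Sobolev embedding $W^{s+d}_1(\mb R^d)\hkto W^{s+d-1}_{d/(d-1)}(\mb R^d)$, available only when $d\ge 2$, to trade a derivative for higher integrability, and then close the estimate via the non-integer dyadic argument above or interpolation between neighboring integer Sobolev spaces.

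\textbf{Main obstacle.} The crux lies entirely in the critical scaling of part~(2): the naive H\"older estimate from part~(1) diverges logarithmically at infinity. The Littlewood-Paley splitting converts this divergence into an $\ell^1$-sum over dyadic frequency scales whose convergence hinges on the third index $q=1$ in $B^{s+d}_{1,1}$. The integer-smoothness case is the most delicate since the Slobodeckij-Besov identity breaks down there, and one must rely on a Sobolev embedding step that is unavailable in dimension one, precisely matching the hypothesis $d\ge 2$; explicit counterexamples (e.g.\ Fourier series with lacunary coefficients on $\mb R$) show this exclusion is sharp.
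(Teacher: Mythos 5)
The paper does not prove this lemma; it is quoted verbatim from MengMing (2022), Theorem 4.3, so your attempt is being judged on its own merits rather than against a reference proof in this text.

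Your plan for part (1) is essentially sound, with one glitch: the claimed inclusion $W^{\al}_p\hkto H^{\al}_p$ ``for $1\le p\le 2$'' fails at $p=1$ when $\al$ is an integer (the exchange of classical derivatives for Bessel derivatives involves Riesz-transform-type multipliers that are unbounded on $L^1$). You do not actually need it there, though: for $p=1$ the Hausdorff--Young target is $L^\infty$, and the bound $\nm{(1+\abs{\cdot}^2)^{\al/2}\wh f}{L^\infty}\lesssim\nm{f}{W^\al_1}$ follows directly from $\abs{\xi^\beta\wh f(\xi)}\le(2\pi)^{-\abs{\beta}}\nm{\partial^\beta f}{L^1}$ for $\abs{\beta}\le\al$; route around $H^\al_1$. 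The non-integer case of part (2) via $W^{s+d}_1=B^{s+d}_{1,1}$ and the dyadic $\ell^1$-sum is correct (with a small side adjustment of the $j=0$ block when $s<0$, handled by $\nm{\wh f}{L^\infty}\le\nm{f}{L^1}$).

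The integer case of part (2) has a genuine gap. The classical Sobolev embedding $W^{s+d}_1\hkto W^{s+d-1}_{d/(d-1)}$ lands exactly on the critical line of part (1): with $p=d/(d-1)$ and $\al=s+d-1$ one has $\al=s+d/p$, so the weight $\abs{\xi}^s(1+\abs{\xi}^2)^{-\al/2}$ decays like $\abs{\xi}^{-d/p}$ at infinity and just fails to be in $L^p$ --- the logarithmic divergence you diagnosed simply migrates to the new exponent. And since $s+d-1$ is still an integer, the Slobodeckij--Besov identification is still unavailable, so you cannot ``close the estimate via the non-integer dyadic argument above.'' The suggested ``interpolation between neighboring integer Sobolev spaces'' is too vague to constitute a proof. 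What is actually needed is the endpoint Besov--Sobolev embedding $W^1_1(\R^d)\hkto B^0_{d/(d-1),1}(\R^d)$, valid only for $d\ge 2$ and false for $d=1$; applied to the $(s+d-1)$-st order derivatives it yields $W^{s+d}_1\hkto B^{s+d-1}_{d/(d-1),1}$, after which your dyadic argument does close. This endpoint embedding is a nontrivial theorem in its own right (it upgrades the $\ell^\infty$ Littlewood--Paley sum that $W^k_1\subset B^k_{1,\infty}$ gives to an $\ell^1$ sum), not a consequence of the classical Sobolev embedding, and without invoking it the integer case does not follow from what you wrote.
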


It follows from the above lemma and Lemma~\ref{lema:ex3} that
\begin{coro}\label{coro:Sobolev}
\begin{enumerate}
\item If $1\le p\le 2$ and $\al>s+d/p$, there holds 
\begin{equation}\label{eq:sobolevembed}
W^{\al}_p(\mb{R}^d)\hkto\ms{B}^s(\mb{R}^d)\hkto C^s(\mb{R}^d).
\end{equation}
	
\item If $s$ is not an integer or $s$ is an integer and $d\ge 2$, then 
\[
W^{s+d}_1(\mb{R}^d)\hkto\ms{B}^s(\mb{R}^d).
\] 
\end{enumerate}
\end{coro}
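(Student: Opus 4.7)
The plan is to obtain both statements by composing two embedding results already at our disposal: the Sobolev--Barron inclusions of Lemma~\ref{lema:BarronSobolev}, and the inclusion $\ms{B}^s_p(\R^d)\hkto\ms{B}^s(\R^d)$ furnished by Lemma~\ref{lema:ex3}. The only inclusion that is not immediate from this scheme is the rightmost embedding $\ms{B}^s(\R^d)\hkto C^s(\R^d)$ in part (1), for which I would appeal to Theorem~\ref{thm:Besov} together with the pointwise Fourier inversion formula of Lemma~\ref{lema:fourinv}.

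For assertion (1), under the hypothesis $1\le p\le 2$ and $\alpha>s+d/p$, Lemma~\ref{lema:BarronSobolev}(1) yields $W^{\alpha}_p(\R^d)\hkto\ms{B}^s_p(\R^d)$; composing with Lemma~\ref{lema:ex3} gives $W^{\alpha}_p(\R^d)\hkto\ms{B}^s_p(\R^d)\hkto\ms{B}^s(\R^d)$, which is the left half of~\eqref{eq:sobolevembed}. For the right half, I would invoke Theorem~\ref{thm:Besov} to pass from $\ms{B}^s(\R^d)$ into $B^s_{\infty,1}(\R^d)$. When $s$ is non-integer, Lemma~\ref{lema:Besov} then gives $B^s_{\infty,1}(\R^d)\hkto B^s_{\infty,\infty}(\R^d)$, which coincides with $C^s(\R^d)$ via the standard Besov identification of the H\"older space. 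When $s$ is a non-negative integer, I would instead argue directly: since $(1+|\xi|^s)|\wh{f}(\xi)|\in L^1(\R^d)$, differentiating the Fourier inversion formula $f=(\wh f)^\vee$ provided by Lemma~\ref{lema:fourinv} under the integral sign is legitimate, and shows that $f$ has continuous derivatives up to order $s$ with each derivative bounded in sup-norm by $\nm{f}{\ms{B}^s(\R^d)}$.

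For assertion (2), the argument is entirely parallel and requires no new idea: under the stated hypothesis on $s$ and $d$, Lemma~\ref{lema:BarronSobolev}(2) provides $W^{s+d}_1(\R^d)\hkto\ms{B}^s_1(\R^d)$, and a final application of Lemma~\ref{lema:ex3} with $p=1$ delivers $\ms{B}^s_1(\R^d)\hkto\ms{B}^s(\R^d)$, finishing the proof by composition.

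I expect the only mildly subtle point to be the rightmost inclusion $\ms{B}^s(\R^d)\hkto C^s(\R^d)$ for integer $s$, where one must fix the precise interpretation of $C^s$ (classical $s$-times continuously differentiable functions with bounded derivatives versus the Zygmund class $\mc{C}^s$) and select the matching argument (Besov identification or direct Fourier inversion) accordingly. Apart from this bookkeeping, every step is a one-line composition of previously established embeddings.
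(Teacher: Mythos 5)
Your proof is correct and follows essentially the same composition scheme as the paper: the left inclusions $W^{\alpha}_p \hkto \ms{B}^s_p \hkto \ms{B}^s$ in both parts are obtained exactly as you describe by chaining Lemma~\ref{lema:BarronSobolev} with Lemma~\ref{lema:ex3}. The one point of divergence is the rightmost inclusion $\ms{B}^s \hkto C^s$: the paper handles all $s$ uniformly via $\ms{B}^s \hkto B^s_{\infty,1} \hkto C^s$, invoking Theorem~\ref{thm:Besov}, Lemma~\ref{lema:Besov}, and citations to Triebel's identification of the Besov scale $B^s_{\infty,q}$ with H\"older/Zygmund spaces, whereas you split by whether $s$ is an integer and, for integer $s$, differentiate under the Fourier inversion integral. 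Your direct argument is valid and more self-contained for the integer case (just note that $\partial^\beta f = \bigl((2\pi i\xi)^\beta \wh f\bigr)^\vee$ picks up a factor $(2\pi)^{\abs{\beta}}$, so the sup-norm of the derivatives is controlled by a constant times $\nm{f}{\ms{B}^s(\R^d)}$ rather than by the norm itself), and your caution about the interpretation of $C^s$ for integer $s$ is well placed.
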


The first embedding with $p=2$ and $s=1$ was hidden in~\cite{Barron:1993}*{~\S~II, Para. 7;~\S~IX, 15}.
\begin{proof}
By Lemma~\ref{lema:BarronSobolev} and Lemma~\ref{lema:ex3}, when $\al>s+d/p$ and $1\le p\le 2$, we have
\[
W^{\al}_p(\mb{R}^d)\hkto\ms{B}^s_p(\mb{R}^d)\hkto\ms{B}^s(\R^d).
\]

When $s$ is not an integer or $s$ is an integer and $d\ge 2$, there holds
\[
W^{s+d}_1(\mb{R}^d)\hkto\ms{B}^s_1(\mb{R}^d)\hkto\ms{B}^s(\R^d).
\] 

It remains to prove the right-hand side of~\eqref{eq:sobolevembed}. Using Theorem~\ref{thm:Besov}, 
\[
\ms{B}^s(\mb{R}^d)\hkto B^s_{\infty,1}(\mb{R}^d)\hookrightarrow C^s(\mb{R}^d)
\]
due to Theorem~\ref{thm:Besov}, Lemma~\ref{lema:Besov} and~\cite{Triebel:1983}*{\S~2.3.5, Eq. (1); \S~2.5.7, Eq. (2), (9), (11)}.
\end{proof}
\section{Application to deep neural network approximation}\label{sec:appro}
The embedding results proved in Theorem~\ref{thm:Besov} and Corollary~\ref{coro:Sobolev} indicate that $s$ is a smoothness index. Consequently, we are interested in exploring the approximate rate when $s$ is small with $\ms{B}^s$ as the target function space. To facilitate our analysis, we shall focus on the hypercube $\Omega{:}=[0,1]^d$, and the spectral norm for function $f$ with respect to $\Omega$ is
\[
\upsilon_{f,s,\Omega}=\inf_{Ef|_\Omega=f}\int_{\mb{R}^d}|\xi|_1^s\abs{\wh{Ef}(\xi)}\mr{d}\xi.
\]
Here we replace $\abs{\xi}$ by $|\xi|_1$ in the definition of $\upsilon_{f,s,\Omega}$, the latter seems more natural for studying the approximation over the hypercube as suggested by~\cite{Barron:1993}*{\S~V}.
\begin{definition}
	A sigmoidal function is a bounded function $\sigma:\mb{R}\mapsto\mb{R}$ such that
	\[
		\lim_{t\to-\infty}\sigma(t)=0,\qquad \lim_{t\to+\infty}\sigma(t)=1.
	\]
	For example, the Heaviside function $\chi_{[0,\infty)}$ is a sigmoidal function.
\end{definition}

A classical idea for the approximation error of neural networks with  sigmoidal activation functions $\sigma$ is to use the Heaviside function $\chi_{[0,\infty)}$ as a transition. \textsc{Caragea et. al.}~\cite{Caragea:2023} pointed out that the gap between sigmoidal function $\sigma$ and the Heaviside function $\chi_{[0,\infty)}$ cannot be dismissed in $L^\infty(\Omega)$. While this gap does not exist in $L^2(\Omega)$.
\begin{lemma}\label{lema:sigmoidal}
	For fixed $\omega\in\mb{R}^d\backslash\{0\}$ and $b\in\mb{R}$,
	\[
		\lim_{\tau\to\infty}\nm{\sigma(\tau(\omega\cdot x+b))-\chi_{[0,\infty)}(\omega\cdot x+b)}{L^2(\Omega)}=0.
	\] 
\end{lemma}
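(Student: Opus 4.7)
The plan is to combine a pointwise almost-everywhere convergence with the dominated convergence theorem on the bounded set $\Omega$. First I would introduce the exceptional hyperplane $H{:}=\set{x\in\mb{R}^d}{\omega\cdot x+b=0}$. Since $\omega\neq 0$, the set $H$ is a genuine affine hyperplane of dimension $d-1$, hence $H\cap\Omega$ has $d$-dimensional Lebesgue measure zero. Thus it suffices to establish pointwise convergence on $\Omega\setminus H$ together with a uniform bound.

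Second, for any $x\in\Omega\setminus H$ the quantity $\omega\cdot x+b$ has a definite nonzero sign, so $\tau(\omega\cdot x+b)\to+\infty$ when $\omega\cdot x+b>0$ and $\tau(\omega\cdot x+b)\to-\infty$ when $\omega\cdot x+b<0$ as $\tau\to\infty$. By the defining limits of a sigmoidal function, $\sigma(\tau(\omega\cdot x+b))$ converges to $1$ in the first case and to $0$ in the second, which in both cases matches $\chi_{[0,\infty)}(\omega\cdot x+b)$. Hence the integrand
\[
\abs{\sigma(\tau(\omega\cdot x+b))-\chi_{[0,\infty)}(\omega\cdot x+b)}^2
\]
tends to zero pointwise a.e.\ on $\Omega$ as $\tau\to\infty$.

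Third, since $\sigma$ is bounded on $\mb{R}$, there exists $M>0$ with $\nm{\sigma}{L^\infty(\mb{R})}\le M$, and $\chi_{[0,\infty)}$ is bounded by $1$, which gives the uniform pointwise majorization
\[
\abs{\sigma(\tau(\omega\cdot x+b))-\chi_{[0,\infty)}(\omega\cdot x+b)}^2\le (M+1)^2\qquad\text{for all }x\in\Omega,\;\tau>0.
\]
Since the constant $(M+1)^2$ is integrable on the bounded cube $\Omega$, Lebesgue's dominated convergence theorem yields the claimed $L^2(\Omega)$-convergence. The main obstacle is essentially conceptual rather than technical: one must recognize that the pointwise limit can fail on the hyperplane $H$ (because the argument of $\sigma$ vanishes there and the limit is dictated by the particular value $\sigma(0)$ rather than by the sigmoidal end-behaviour), and then observe that this exceptional set is $d$-dimensional null, which is exactly why the $L^\infty$ statement considered in \textsc{Caragea et.\ al.}~\cite{Caragea:2023} fails but the $L^2$ statement survives.
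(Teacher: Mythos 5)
Your proof is correct and rests on the same key observation as the paper's, namely that the only obstruction to pointwise convergence is the hyperplane $H=\set{x}{\omega\cdot x+b=0}$, which is $d$-dimensionally null. The packaging differs: the paper carries out the estimate by hand, splitting $\Omega$ into the thin slab $\Omega_1=\set{x\in\Omega}{\abs{\tau(\omega\cdot x+b)}<\delta}$ (whose measure shrinks as $\tau\to\infty$ for fixed $\delta$, and where one only uses boundedness of $\sigma$) and its complement $\Omega_2$ (where $\abs{\tau(\omega\cdot x+b)}\ge\delta$ so the sigmoidal end-behaviour makes the integrand small for $\delta$ large); you instead invoke the dominated convergence theorem, which abstracts away exactly that two-region bookkeeping. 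Your route is arguably cleaner and leaves no gaps, while the paper's direct split is more elementary and, if pushed, yields an explicit rate in terms of $\delta$, $\tau$, and $\abs{\omega}$. Both are fine; your remark at the end about why the $L^\infty$ version fails while the $L^2$ version survives accurately captures the point the paper is making by citing \textsc{Caragea et al.}
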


\begin{proof}
	Note that
	\[
		\lim_{t\to\pm\infty}\abs{\sigma(t)-\chi_{[0,\infty)}(t)}=0.
	\]
	We divide the cube $\Omega$ into $\Omega_1{:}=\set{x\in\Omega}{\abs{\tau(\omega\cdot x+b)}<\delta}$ and $\Omega_2{:}=\Omega\setminus\Omega_1$. With proper choice of $\delta>0$ and $\tau>0$ large enough, we can obtain that the $L^2$-distance between $\sigma(\tau(\omega\cdot x+b))$ and $\chi_{[0,\infty)}(\omega\cdot x+b)$ is arbitrarily small.
\end{proof}
For a shallow neural network, the following lemma in~\cite{Barron:1993} is proved for the real-valued function, while it is straightforward to extend the proof to the complex-valued function.
\begin{lemma}[\cite{Barron:1993}*{Theorem 1}]\label{lema:low}
	Let $f\in\ms{B}^1(\mb{R}^d)$, there exists
	\begin{equation}\label{eq:shallow}
		f_N(x)=\sum_{i=1}^Nc_i\sigma(\omega_i\cdot x+b_i)
	\end{equation}
	with $\omega_i\in\mb{R}^d,b_i\in\mb{R}$ and $c_i\in\mb{C}$ such that
	\[
		\nm{f-f_N}{L^2(\Omega)}\le\dfrac{2\upsilon_{f,1,\Omega}}{\sqrt{N}}.
	\]
\end{lemma}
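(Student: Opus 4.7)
The plan is to follow Barron's classical construction~\cite{Barron:1993}, adapting it to the complex-valued setting and the $|\xi|_1$-based spectral norm. First I would fix an arbitrary $\varepsilon>0$ and choose an extension $Ef\in\ms{B}^1(\mb{R}^d)$ with $Ef|_\Omega=f$ and $\upsilon_{Ef,1}\le\upsilon_{f,1,\Omega}+\varepsilon$, where $|\xi|$ in the definition is understood as $|\xi|_1$. By Lemma~\ref{lema:fourinv}, the pointwise Fourier inversion applies, so
\[
f(x)-f(0)=\int_{\mb{R}^d}\wh{Ef}(\xi)\bigl(e^{2\pi i x\cdot\xi}-1\bigr)\md\xi,\qquad x\in\Omega.
\]
Writing $\wh{Ef}(\xi)=e^{i\theta(\xi)}|\wh{Ef}(\xi)|$ and introducing the probability measure $\md\Lambda(\xi)=|\xi|_1|\wh{Ef}(\xi)|\md\xi/\upsilon_{Ef,1}$ on $\mb{R}^d\setminus\{0\}$, this recasts $f-f(0)$ as the $\Lambda$-expectation of a family of uniformly bounded normalized ridge functions $G_\xi(x)=\upsilon_{Ef,1}|\xi|_1^{-1}(e^{2\pi i x\cdot \xi}-1)e^{i\theta(\xi)}$.

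Second, I would represent each $G_\xi$ on $\Omega$ as a superposition of Heaviside ridge features. Since $|2\pi\xi\cdot x|\le 2\pi|\xi|_1$ on $\Omega=[0,1]^d$, the map $t\mapsto e^{it}-1$ restricted to the relevant compact interval is Lipschitz, and the fundamental theorem of calculus expresses its real and imaginary parts as integrals of indicator functions in a threshold variable $z$. Splitting into real and imaginary, and positive and negative parts, I obtain $G_\xi$ as a signed mixture of $\chi_{[0,\infty)}(\omega\cdot x+b)$ with $\omega$ parallel to $\xi$ and $b\in\mb{R}$, whose total variation is controlled by $\upsilon_{Ef,1}$. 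Integrating against $\Lambda$ then places $f-f(0)$ in the closure of the (signed) convex hull $\mr{co}(\pm H)$ in $L^2(\Omega)$, where $H=\{\chi_{[0,\infty)}(\omega\cdot x+b):\omega\in\mb{R}^d,b\in\mb{R}\}$, with total mass at most $\upsilon_{Ef,1}$.

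Third, I would invoke Maurey's empirical method (a Carath\'eodory-style probabilistic selection in $L^2(\Omega)$): since each element of $\pm H$ has $L^2(\Omega)$-norm at most $1$, drawing $N$ i.i.d.\ atoms from the representing signed measure gives, in expectation, an $L^2$-approximation of $f-f(0)$ by a linear combination $\sum_{i=1}^{N-1} c_i\chi_{[0,\infty)}(\omega_i\cdot x+b_i)$ with error at most $\upsilon_{Ef,1}/\sqrt{N-1}$; careful bookkeeping of the real/imaginary split delivers the constant $2$. Reserving one of the $N$ units to absorb the constant $f(0)$ (using $\sigma$ with large $|\tau|$) and then applying Lemma~\ref{lema:sigmoidal} to replace each Heaviside by $\sigma(\tau(\omega_i\cdot x+b_i))$ with $\tau$ large enough that the sigmoidal error is below $\varepsilon$, produces an $f_N$ of the form~\eqref{eq:shallow} satisfying $\|f-f_N\|_{L^2(\Omega)}\le 2\upsilon_{Ef,1}/\sqrt{N}+C\varepsilon$. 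Letting $\varepsilon\to0$ closes the bound.

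The main obstacle will be step two: producing the integral-of-indicators representation with a sharp constant so that the Maurey step yields exactly the factor $2$ rather than a larger universal constant. This requires carefully choosing the threshold variable to range over $[-|\xi|_1,|\xi|_1]$ (for which the $|\xi|_1$-normalization of $\Lambda$ is essential) and splitting the complex-valued ridge into four nonnegative pieces whose total masses sum to exactly $2|\xi|_1$. A secondary but routine subtlety is ensuring that the extension $Ef$ and the sigmoidal-replacement errors can be driven below any prescribed threshold uniformly in $N$, so that the $\varepsilon\to 0$ limit produces the claimed clean inequality.
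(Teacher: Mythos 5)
Your outline follows Barron's original argument (tilted probability measure $\md\Lambda(\xi)=|\xi|_1|\wh{Ef}(\xi)|\md\xi/\upsilon_{Ef,1}$, total-variation representation of the normalized ridge as a signed mixture of Heaviside ridges, Maurey/Carath\'eodory sampling, then sigmoidal replacement), which is exactly what the paper is invoking: the paper offers no proof of Lemma~\ref{lema:low}, merely citing \cite{Barron:1993}*{Theorem 1} and asserting that the complex-valued extension is routine. So the route is the intended one.

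The constant bookkeeping, however, is internally inconsistent and does not close. In step~3 you claim Maurey yields error $\upsilon_{Ef,1}/\sqrt{N-1}$ for $f-f(0)$ and that ``careful bookkeeping of the real/imaginary split delivers the constant $2$,'' but in your concluding paragraph you correctly locate the $2$ in step~2: the per-ridge Heaviside representation already carries total mass $2|\xi|_1$, i.e.\ the $2$ is a total-variation constant, not a real/imaginary count. If the representing signed measure of $f-f(0)$ has total mass $2\upsilon_{Ef,1}$, Maurey gives $2\upsilon_{Ef,1}/\sqrt{m}$ with $m$ atoms directly; adding a further factor $2$ for the complex split would overshoot to $4\upsilon_{Ef,1}/\sqrt{m}$. (For complex $f$ one samples once with complex $c_i$ and pays nothing; a naive split of the budget between $\mathrm{Re}\,f$ and $\mathrm{Im}\,f$ costs at most $\sqrt2$, not $2$.) Separately, reserving one of the $N$ units to absorb the offset $f(0)$ leaves only $N-1$ Maurey atoms, yielding $2\upsilon_{Ef,1}/\sqrt{N-1}$ rather than $2\upsilon_{Ef,1}/\sqrt{N}$; this off-by-one reflects that Barron's Theorem~1 allows an extra additive constant $c_0$ outside the $n$ sigmoidal terms, whereas the paper's $f_N$ in~\eqref{eq:shallow} has none. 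Neither issue breaks the method, but as written your final inequality $\|f-f_N\|_{L^2(\Omega)}\le 2\upsilon_{Ef,1}/\sqrt{N}+C\varepsilon$ is asserted rather than derived, and the two accounts of where the $2$ comes from contradict each other.
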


In this part, we shall show the approximation error for the deep neural network. We use the $(L,N)$-network to describe a neural network with $L$ hidden layers and at most $N$ units per layer. Here $L$ denotes the number of hidden layers, e.g., the shallow neural network, expressed as~\eqref{eq:shallow}, is an $(1,N)$-network.
\begin{definition}[$(L,N)$-network]
An $(L,N)$-network represents a neural network with $L$ hidden layers and at most $N$ units per layer. The activation functions of the first $L-1$ layers are all ReLU and the activation function of the last layer is the sigmoidal function. The connection weights between the input layer and the hidden layer, and between the hidden layer and the hidden layer are all real numbers. The connection weights between the last hidden layer and the output layer are complex numbers.
\end{definition}

There is relatively little work on the approximation rate of deep neural networks that utilize the spectral Barron space as the target space. For deep ReLU networks,~\cite{BreslerNagaraj:2020} has proven approximation results of $(sL/2)$-order. The main contribution of this section is to improve this result to an $sL$-order approximation, at the cost of introducing $\upsilon_{f,s,\Omega}$ in the estimate; cf. Theorem~\ref{thm:deep}.

\begin{theorem}\label{thm:deep}
	Let the positive integer $L$ and $f\in\ms{B}^s(\mb{R}^d)$ with $0<sL\le 1/2$. For any positive integer $N$ there exists an $(L,N+2)$-network $f_N$ such that
	\begin{equation}\label{eq:thm2}
		\nm{f-f_N}{L^2(\Omega)}\le\dfrac{29\upsilon_{f,s,\Omega}}{N^{sL}}.
	\end{equation}
 Moreover, if $f$ is a real-valued function, then the connection weights in $f_N$ are all real.
\end{theorem}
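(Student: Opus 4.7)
The plan is to prove the theorem by induction on the depth $L$. For the base case $L=1$, the strategy is to sharpen Barron's shallow approximation (Lemma~\ref{lema:low}) so as to replace the regularity index $s=1$ by $s\le 1/2$ and the rate $N^{-1/2}$ by $N^{-s}$. The mechanism is: select a near-optimal extension $Ef$ of $f$ with $\upsilon_{Ef,s}$ close to $\upsilon_{f,s,\Omega}$, perform Fourier inversion, and carry out a Maurey--Pisier selection in $L^2(\Omega)$ by drawing $N$ frequencies $\xi_1,\dots,\xi_N$ from the probability density proportional to $\abs{\xi}_1^{s}\abs{\wh{Ef}(\xi)}$. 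Each term is a ridge $e^{2\pi i x\cdot\xi_j}$ that on $\Omega$ is approximated by a single sigmoidal unit (using Lemma~\ref{lema:sigmoidal} to pass from the Heaviside to $\sigma$). The refined rate $N^{-s}$ should emerge from a careful $L^2$-computation balancing the Maurey variance against the single-neuron bias, exploiting the assumption $s\le 1/2$.

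For the inductive step, the plan is a bootstrapping/composition argument. Assume the theorem for depth $L-1$. Given $f\in\ms{B}^s(\mb{R}^d)$, first construct a shallow representation as above, then replace each outer sigmoidal acting on the univariate variable $\omega_j\cdot x+b_j$ by a depth-$(L-1)$ ReLU sub-network of width $N$ that sharpens the one-dimensional approximation of $\cos$ and $\sin$; such sub-networks are known to approximate smooth univariate functions at rate $\mc{O}(N^{-(L-1)s})$ through repeated ReLU composition. Assembling the sub-networks with one sigmoidal output layer yields an $(L,N+2)$-network whose total $L^2$ error inherits the multiplicative rate $N^{-sL}$; the two extra units per layer carry the $\cos$/$\sin$ channels and one bookkeeping identity channel through the ReLU layers. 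When $f$ is real-valued, the Hermitian symmetry $\wh{Ef}(-\xi)=\overline{\wh{Ef}(\xi)}$ collapses the complex exponentials into real cosines, so all connection weights can be chosen real.

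The principal obstacle lies in the inductive step: one has to show that the error from the inner depth-$(L-1)$ ReLU approximation of cosine/sine combines \emph{additively} rather than multiplicatively with the outer stochastic-selection error, so that the rates genuinely compose to $N^{-sL}$ in the exponent rather than degrading. The condition $sL\le 1/2$ appears precisely as the balance point at which the Maurey variance contribution (which behaves like $N^{-1/2}$ independently of $s$ and $L$) remains subdominant to the composed approximation bias $N^{-sL}$; beyond this regime a qualitatively different technique would be required, and indeed the matching lower bound (Theorem~\ref{thm:lower}) is expected to confirm that this is the true barrier. Tracking the explicit constant $29$ will require careful bookkeeping of the numerical factors arising from Lemma~\ref{lema:low}, Lemma~\ref{lema:sigmoidal}, and each level of the composition.
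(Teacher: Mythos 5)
Your proposal takes a genuinely different route from the paper, and it has a gap that the ``principal obstacle'' you identify does not really capture. The paper does not argue by induction on $L$: depth enters through the \emph{exact} composition identity of Lemma~\ref{lema:comp}, namely that the $2n_1n_2$-peaked periodic extension $g_{,2n_1n_2}$ equals $g_{,n_2}\circ\beta_{,n_1}$ on $[0,1]$. Iterating, a sawtooth with $n_\xi\asymp|\xi|_1$ peaks is implemented by $L$ layers, each of width $\mc{O}(|\xi|_1^{1/L})$, with \emph{no} inner approximation error; Lemma~\ref{lema:cos} then converts the final stepwise layer into the cosine exactly. Lemma~\ref{lema:deep} couples this exact implementation with Maurey sampling of frequencies from a density $\propto|\xi|_1^{-s}\abs{\wh f(\xi)}$, allots a width budget $\mc{O}(|\xi_i|_1^{1/L})$ per sample, and invokes $sL\le 1/2$ precisely in the subadditivity estimate $\bigl(\sum_i|\xi_i|_1^{1/L}\bigr)^{2sL}\le\sum_i|\xi_i|_1^{2s}$ that controls the expected total width. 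Theorem~\ref{thm:deep} itself is a frequency split $f=f_1+f_2$ at $|\xi|_1=1$, with the low-frequency part routed to Barron's shallow Lemma~\ref{lema:low} and identity ReLUs filling the extra layers; the ``$+2$'' comes from ceilings, not from cosine/sine bookkeeping channels.

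There are two concrete problems with your plan. In the base case $L=1$ you assert that each sampled ridge $e^{2\pi ix\cdot\xi_j}$ is approximated by a \emph{single} sigmoidal unit and that $N^{-s}$ emerges from balancing Maurey variance against a ``single-neuron bias.'' A single sigmoid has a fixed $L^2(\Omega)$ distance from a high-frequency cosine that does not vanish by averaging more samples; the paper instead spends $\mc{O}(|\xi_j|_1)$ Heaviside units per sampled frequency when $L=1$, and the rate is governed by the total width budget, not by any single-neuron bias. In the inductive step you want a depth-$(L{-}1)$ ReLU sub-network of width $N$ to approximate $\cos(2\pi n_\xi t)$ at rate $N^{-(L-1)s}$ \emph{uniformly in the frequency} $n_\xi$; no such result is available (every relevant smoothness seminorm of $\cos(2\pi n_\xi t)$ grows with $n_\xi$, and $\cos(2\pi n_\xi t)\notin\ms{B}^s(\mb{R})$ so the inductive hypothesis cannot even be invoked on it). The paper sidesteps this entirely by making the inner composition exact; that exactness is the missing idea, and no bookkeeping of additive versus multiplicative error propagation can substitute for it.
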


 As far as we are aware, the above theorem represents the state-of-the-art in the literature to date. For shallow neural network $L=1$, the authors in~\cite{MengMing:2022} established a $1/2$-order convergence with target function space $\ms{B}^{1/2}_2(\mb{R}^d)$, which is a subspace of $\ms{B}^{1/2}(\R^d)$. Their estimate depends on the dimension with a factor $d^{1/4}$. The upper bound in~\cite{Siegel:2022} depends on $\upsilon_{f,0}+\upsilon_{f,1/2}$, whereas~\eqref{eq:ex1} demonstrates that $\upsilon_{f,0}$ can be much larger than $\upsilon_{f,s}$ for certain functions in $\ms{B}^s(\mb{R}^d)$, and the estimate depends on the dimension exponentially. In contrast to these two results, the upper bound in Theorem~\ref{thm:deep} relies solely on $\upsilon_{f,s,\Omega}$, and is independent of the dimension.

For deep neural network, a similar result for the ReLU activation function has been presented in~\cite{BreslerNagaraj:2020} with a $(sL/2)$-order approximation. Compared to this result, our estimate exhibits a higher level of optimality. At first glance, our result might seem to contradict~\cite{BreslerNagaraj:2020}*{Theorem 2}. In reality, this is not the case because the upper bound in that reference is $\sqrt{\upsilon_{f,0}\upsilon_{f,s}}+\upsilon_{f,0}$, which requires $f\in\ms{B}^s(\mb{R}^d)$, but is typically smaller than $\nm{f}{\ms{B}^s(\mb{R}^d)}$ for oscillatory functions; cf. Lemma~\ref{lema:decay}.

In what follows, we make some preparations to prove Theorem~\ref{thm:deep}. The analysis in this part owns the most to~\cite{BreslerNagaraj:2020} with certain improvements that will be detailed later on. For any function $g$ defined on $[0,1]$ and it is symmetric about $x=1/2$, We use the notation $g_{,n}$ to denote the function $g$ in the $[0,1]$ interval of the period repeated $n$ times, i.e.,
\begin{equation}\label{eq:gn}
	g_{,n}(t)=g(nt-j),\quad j=0,\dots,n-1,\quad 0\le nt-j\le 1.
\end{equation}
Define
\[
	\beta(t)=\text{ReLU}(2t)-2\text{ReLU}(2t-1)+\text{ReLU}(2t-2)=\begin{cases}
		2t,&0\le t\le 1/2,\\
		2-2t,&1/2\le t\le 1,\\
		0,&\text{otherwise}.
	\end{cases}
\]
By definition~\eqref{eq:gn}, $\beta_{,n}$ represents a triangle function with $n$ peaks and can be represented by $3n$ ReLUs:
\[
	\beta_{,n}(t)=\sum_{j=0}^{n-1}\beta(nt-j),\quad 0\le t\le 1.
\]
\begin{lemma}\label{lema:comp}
	Let $g$ be a function defined on $[0,1]$ and symmetric about $x=1/2$, then $g_{,n_2}\circ\beta_{,n_1}=g_{,2n_1n_2}$ on $[0,1]$.
\end{lemma}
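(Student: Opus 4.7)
The plan is to reduce the identity to a pointwise verification on subintervals where $\beta_{,n_1}$ is affine. I would first partition $[0,1]$ into the $n_1$ intervals $I_j=[j/n_1,(j+1)/n_1]$ and use the definition of $\beta_{,n_1}$ to write $\beta_{,n_1}(t)=\beta(n_1t-j)$ on $I_j$, noting that $\beta$ maps $[0,1/2]$ linearly onto $[0,1]$ (slope $+2$) and maps $[1/2,1]$ linearly back onto $[0,1]$ (slope $-2$). Thus on the ascending half of the $j$th tent $\beta_{,n_1}(t)=2n_1t-2j$, while on the descending half $\beta_{,n_1}(t)=2(j+1)-2n_1t=1-(2n_1t-2j-1)$.

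Next I would establish the auxiliary fact that the symmetry of $g$ about $1/2$ is inherited by $g_{,n_2}$, i.e.\ $g_{,n_2}(1-s)=g_{,n_2}(s)$ for $s\in[0,1]$. This follows by a short index-matching calculation: if $s\in[k/n_2,(k+1)/n_2]$ then $1-s\in[(n_2-k-1)/n_2,(n_2-k)/n_2]$, and writing $g_{,n_2}(1-s)=g(k+1-n_2s)=g(1-(n_2s-k))$ one uses $g(x)=g(1-x)$ to conclude $g_{,n_2}(1-s)=g(n_2s-k)=g_{,n_2}(s)$. Applying this inherited symmetry on the descending half of each tent gives $g_{,n_2}(\beta_{,n_1}(t))=g_{,n_2}(2n_1t-2j-1)$, matching the ascending-half form up to a half-integer shift.

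With these ingredients in place, on each $I_j$ the composition splits into two halves, each an affine substitution $u=2n_1t-2j$ (ascending) or $u=2n_1t-2j-1$ (descending) that maps a half of $I_j$ bijectively onto $[0,1]$. Since $g_{,n_2}(u)=g(n_2u-m)$ on $[m/n_2,(m+1)/n_2]$, substituting $u$ back in $t$ rewrites each piece as $g(2n_1n_2t-k)$ on $[k/(2n_1n_2),(k+1)/(2n_1n_2)]$, with $k=2n_2j+m$ on the ascending half (covering $\{2n_2j,\dots,2n_2j+n_2-1\}$) and $k=2n_2j+n_2+m$ on the descending half (covering $\{2n_2j+n_2,\dots,2n_2(j+1)-1\}$). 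Concatenating the halves and summing over $j=0,\dots,n_1-1$ then shows that $k$ covers $\{0,1,\dots,2n_1n_2-1\}$ exactly once, which is precisely the definition of $g_{,2n_1n_2}$.

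The main obstacle, though largely mechanical, is the careful bookkeeping of the index parameter in $g_{,m}(t)=g(mt-k)$: one must verify that as $j$ ranges over the $n_1$ outer intervals and the tent traverses up then down, the reindexed values of $k$ fill out $\{0,1,\dots,2n_1n_2-1\}$ in the correct order with no gaps or overlaps, and that the inherited symmetry of $g_{,n_2}$ genuinely permits folding the descending half onto the same form as the ascending half. Beyond these bookkeeping points, no additional tools are needed.
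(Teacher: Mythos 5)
Your proposal is correct and follows essentially the same route as the paper: first show that $g_{,n}$ inherits the symmetry of $g$ about $1/2$, then carry out the index bookkeeping by splitting each tent of $\beta_{,n_1}$ into its ascending and descending halves and using the inherited symmetry to fold the descending half onto the ascending form. The paper phrases the bookkeeping backwards (fix $t$, locate the index pair $(j,k)$ with $0\le 2n_1n_2t-n_2j-k\le 1$, and split on the parity of $j$), whereas you enumerate the subintervals forward and verify that the resulting indices tile $\{0,\dots,2n_1n_2-1\}$, but the two arguments are the same in substance.
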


The above lemma is a rigorous statement of~\cite{Telgarsky:2016}*{Proposition 5.1}. A key example is $\cos(2\pi n_2\beta_{,n_1}(t))=\cos(4\pi n_1n_2t)$ when $t\in [0, 1]$. A geometrical explanation may be founded in ~\cite{Bolcskei:2021}*{Figure 3}. We postpone the rigorous proof in Appendix~\ref{apd:comp}.

For $r\in(0,1)$, we define 
\[
	\alpha(t,r)=\chi_{[0,\infty)}(t-r/2)-\chi_{[0,\infty)}(t-(1-r)/2)=\begin{cases}
		\chi_{[r/2,(1-r)/2]}(t), & 0< r\le 1/2,\\
		-\chi_{[(1-r)/2,r/2]}(t), & 1/2\le r< 1,
	\end{cases}
\]
then supp$(\alpha(\cdot,r))\subset(0,1/2)$ and $\alpha(t,r)$ is symmetric about $t=1/4$. Define
\[
	\gamma(t,r)=\alpha(t+1/4,r)-\alpha(t-1/4,r)+\alpha(t-3/4,r).
\]
Then $\gamma(t,r)$ is symmetric about $t=1/2$ because
\begin{align*}
	\gamma(1-t,r)=&\alpha(5/4-t,r)-\alpha(3/4-t,r)+\alpha(1/4-t,r)\\
	=&\alpha(t-3/4,r)-\alpha(t-1/4,r)+\alpha(t+1/4,r)=\gamma(t,r).
\end{align*}
By definition~\eqref{eq:gn}, $\gamma_{,n}(\cdot,r)$ is well defined on $[0,1]$ and
\begin{align*}
	\gamma_{,n}(t,r)=&\begin{cases}
		\alpha(nt-j+1/4,r), & 0\le nt-j \le 1/4, \\
		-\alpha(nt-j-1/4,r), & 1/4\le nt-j\le 3/4,\\
		\alpha(nt-j-3/4,r), & 3/4\le nt-j\le 1,
	\end{cases}&& j=0,\dots,n-1\\
	=&\begin{cases}
		\alpha(nt-j+1/4,r), & -1/4\le nt-j\le 1/4,\\
		-\alpha(nt-j-1/4,r), & 1/4\le nt-j\le 3/4,\\
	\end{cases} && j=0,\dots,n.
\end{align*}
And $\gamma_{,n}(\cdot,r)$ on $[0,1]$ can be represents by $4n$ Heaviside function $\chi_{[0,\infty)}$ due to $\alpha(nt+1/4,r),\alpha(nt-n+1/4,r)$ only need one Heaviside function each:
\[
	\gamma_{,n}(t,r)=\sum_{j=0}^n\alpha(nt-j+1/4,r)-\sum_{j=0}^{n-1}\alpha(nt-j-1/4,r).
\] 

A direct consequence of the above construction is
\begin{lemma}\label{lema:cos}
For $t\in[0,1]$, there holds
	\begin{equation}\label{eq:cos}
		\dfrac\pi{2}\int_0^1\cos(\pi r)\gamma_{,n}(t,r)\mr{d}r=\cos(2\pi nt).
	\end{equation}
\end{lemma}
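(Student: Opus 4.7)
The plan is to first reduce the claim to the ``base'' case $n = 1$, and then to carry out an explicit integration region by region. Given $t \in [0, 1]$, I pick $j \in \{0, 1, \dots, n-1\}$ so that $u := nt - j \in [0, 1]$. Then by the definition of $\gamma_{,n}$ one has $\gamma_{,n}(t, r) = \gamma(u, r)$, and because $j$ is an integer, $\cos(2\pi n t) = \cos(2\pi u)$. Hence the statement reduces to proving
\begin{equation*}
\int_0^1 \cos(\pi r)\, \gamma(u, r)\, \mathrm{d}r = \frac{2}{\pi}\, \cos(2\pi u), \qquad u \in [0, 1].
\end{equation*}

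Next, I would exploit the support of $\alpha$: since $\mathrm{supp}\,\alpha(\cdot, r) \subset (0, 1/2)$ for every $r \in (0, 1)$, of the three terms in $\gamma(u, r) = \alpha(u + 1/4, r) - \alpha(u - 1/4, r) + \alpha(u - 3/4, r)$ at most one survives for each $u \in [0,1]$. Precisely, $\gamma(u, r) = \alpha(u + 1/4, r)$ on $[0, 1/4]$, $\gamma(u, r) = -\alpha(u - 1/4, r)$ on $[1/4, 3/4]$, and $\gamma(u, r) = \alpha(u - 3/4, r)$ on $[3/4, 1]$. In each region the argument of $\alpha$ sits in $[0, 1/2]$, so the identity is further reduced to the auxiliary statement
\begin{equation*}
\int_0^1 \cos(\pi r)\, \alpha(s, r)\, \mathrm{d}r = \frac{2}{\pi}\, \sin(2\pi s), \qquad s \in [0, 1/2].
\end{equation*}

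For this auxiliary identity, I would split into $s \in [0, 1/4]$ and $s \in [1/4, 1/2]$ and read off from the definition where $\alpha(s, r)$ equals $+1$ and where it equals $-1$ in the $r$ variable: namely $+1$ on $[0, \min(2s, 1 - 2s)]$ and $-1$ on $[\max(2s, 1 - 2s), 1]$. Elementary integration of $\cos(\pi r)$ on these intervals, together with the identity $\sin(\pi(1-2s)) = \sin(2\pi s)$, yields $\frac{2}{\pi}\sin(2\pi s)$ in both subcases. Finally, substituting $s = u + 1/4$, $s = u - 1/4$, $s = u - 3/4$ respectively into the three regions, and using $\sin(2\pi u + \pi/2) = \cos(2\pi u)$, $\sin(2\pi u - \pi/2) = -\cos(2\pi u)$, and $\sin(2\pi u - 3\pi/2) = \cos(2\pi u)$ (with signs that conveniently cancel the $\pm$ attached to the surviving $\alpha$ term in $\gamma$), one recovers $\frac{2}{\pi}\cos(2\pi u)$ throughout $[0,1]$, which after multiplying by $\pi/2$ completes the proof. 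The only real care needed is bookkeeping: correctly tracking the breakpoint at $s = 1/4$ where $\min(2s, 1-2s)$ and $\max(2s, 1-2s)$ swap roles; beyond this, the computation is routine.
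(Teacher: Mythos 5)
Your proposal is correct and follows essentially the same route as the paper: both prove the auxiliary identity $\tfrac{\pi}{2}\int_0^1\cos(\pi r)\,\alpha(s,r)\,\mr{d}r=\sin(2\pi s)$ for $s\in[0,1/2]$ and then plug it into the explicit case description of $\gamma_{,n}$. The only cosmetic difference is that you first reduce to $u=nt-j\in[0,1]$ and split $[0,1]$ into three subintervals, whereas the paper works directly with the two-case formula for $\gamma_{,n}$ (indexing $j$ over $0,\dots,n$); these case analyses are equivalent, and your more explicit verification of where $\alpha(s,\cdot)$ equals $\pm1$ replaces the paper's one-line symmetry reduction $\int_0^1\cos(\pi r)\alpha(s,r)\,\mr{d}r=2\int_0^{2s}\cos(\pi r)\,\mr{d}r$.
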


\begin{proof}
	For any $t\in[0,1/2]$, a direct calculation gives
	\[
		\dfrac\pi{2}\int_0^1\cos(\pi r)\alpha(t,r)\mr{d}r=\pi\int_0^{2t}\cos(\pi r)\mr{d}r=\sin(2\pi t).
	\]
	Fix a $t\in[0,1]$. If there exists an integer $j$ satisfying $0\le j\le n$ and $-1/4\le nt-j\le 1/4$, then $\gamma_{,n}(t,r)=\alpha(nt-j+1/4,r)$ and
	\[
		\dfrac\pi{2}\int_0^1\cos(\pi r)\alpha(nt-j+1/4,r)\mr{d}r=\sin(2\pi(nt-j+1/4))=\cos(2\pi nt).
	\]
	Otherwise there exists an integer $j$ satisfying $0\le j\le n-1$ and $1/4\le nt-j\le 3/4$. Then $\gamma_{,n}(t,r)=-\alpha(nt-j-1/4,r)$ and
	\[
		-\dfrac\pi{2}\int_0^1\cos(\pi r)\alpha(nt-j-1/4,r)\mr{d}r=-\sin(2\pi(nt-j-1/4))=\cos(2\pi nt).
	\]
	This completes the proof of~\eqref{eq:cos}.
\end{proof}

The following Lemma is the key point to prove Theorem~\ref{thm:deep}, which follows the framework of~\cites{BreslerNagaraj:2020}, while we achieve a higher order convergence rate and the constant is dimension-free for high-frequency function.
\begin{lemma}\label{lema:deep}
Let the positive integer $L$ and $f\in\ms{B}^s(\mb{R}^d)$ with $0<sL\le 1/2$ and $\mr{supp\;}\wh{f}\subset\set{\xi\in\mb{R}^d}{|\xi|_1\ge 1}$. For any positive integer $N$ there exists an $(L,N)$-network $f_N$ such that
	\begin{equation}\label{eq:L2deep}
		\nm{f-f_N}{L^2(\Omega)}\le\dfrac{22\upsilon_{f,s,\Omega}}{N^{sL}}.
	\end{equation}
\end{lemma}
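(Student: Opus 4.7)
The plan is to approximate $f$ by a Monte Carlo average of deep compositional features, following~\cite{BreslerNagaraj:2020} in spirit but with a sharper truncation--sampling balance that uses only the spectral norm $\upsilon_{f,s,\Omega}$. First, I pick an almost-optimal extension $Ef$ with $\mr{supp}\,\wh{Ef}\subset\{|\xi|_1\ge 1\}$ and $\int|\xi|_1^s|\wh{Ef}(\xi)|\md\xi\le(1+\varepsilon)\upsilon_{f,s,\Omega}$, and use Lemma~\ref{lema:fourinv} to write
\[
f(x)=\int_{|\xi|_1\ge 1}\wh{Ef}(\xi)\,e^{2\pi ix\cdot\xi}\md\xi,\qquad x\in\Omega.
\]
After the affine change of variables $t=(\omega(\xi)\cdot x+c(\xi))/|\xi|_1\in[0,1]$ with $\omega(\xi)=\xi/|\xi|_1$, each mode becomes a one-dimensional cosine of frequency $|\xi|_1$ with a phase; a phase-shifted version of Lemma~\ref{lema:cos}, applied with the integer approximation $m(\xi)=\lceil|\xi|_1\rceil$, rewrites it as a single integral $\tfrac{\pi}{2}\int_0^1\cos(\pi r-\phi_\xi)\gamma_{,m(\xi)}(t,r)\md r$, modulo a rounding correction absorbed into the truncation step below.

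Next I truncate at $|\xi|_1>N^L$: since $|\xi|_1^{-s}\le N^{-sL}$ on this tail, the discarded part has $L^\infty(\Omega)$-norm (hence $L^2(\Omega)$-norm) at most $N^{-sL}\upsilon_{f,s,\Omega}$. On the surviving band $\{1\le|\xi|_1\le N^L\}\times[0,1]$, I introduce the probability measure $\md\mu\propto|\xi|_1^s|\wh{Ef}(\xi)|\md\xi\md r$, draw $N$ i.i.d.\ samples $(\xi_i,r_i)\sim\mu$, and form $\tilde f_N(x)=\tfrac{1}{N}\sum_{i=1}^N\psi_i(x)$, where each $\psi_i$ is the weighted Heaviside-based evaluation $\psi_i(x)\propto|\xi_i|_1^{-s}\cos(\pi r_i-\phi_{\xi_i})\gamma_{,m_i}(t_i(x),r_i)$, scaled so that $\mb{E}[\psi_i(x)]=f(x)$. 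Because $|\xi_i|_1^{-s}\le 1$ on the support and $\gamma$ is uniformly bounded, a Maurey/Barron-type variance computation gives $\mb{E}\nm{f-\tilde f_N}{L^2(\Omega)}^2\lesssim\upsilon_{f,s,\Omega}^2/N$, so some sample realization yields $\nm{f-\tilde f_N}{L^2(\Omega)}\lesssim\upsilon_{f,s,\Omega}/\sqrt{N}\le\upsilon_{f,s,\Omega}/N^{sL}$ since $sL\le 1/2$.

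For the network realization, Lemma~\ref{lema:comp} allows me to factor
\[
\gamma_{,m_i}(t,r_i)=\gamma_{,n_L^{(i)}}\bigl(\beta_{,n_{L-1}^{(i)}}\circ\cdots\circ\beta_{,n_1^{(i)}}(t),r_i\bigr),\qquad m_i=2^{L-1}\prod_{\ell=1}^L n_\ell^{(i)},
\]
so that the first $L-1$ ReLU layers carry all $N$ triangle-wave chains in parallel, with a per-sample block of width $3n_\ell^{(i)}$ in layer $\ell$, and the final sigmoidal layer computes the $N$ outer $\gamma_{,n_L^{(i)}}(\cdot,r_i)$ pieces using Heaviside units, which are replaced by sigmoidals via Lemma~\ref{lema:sigmoidal} at arbitrarily small $L^2$-cost.

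The main obstacle is the combinatorial packing: I need $\sum_i n_\ell^{(i)}\le N$ in every layer while $\prod_\ell n_\ell^{(i)}$ matches each sampled $m_i\le N^L$. The importance measure $|\xi|_1^s|\wh{Ef}|$ down-weights high-$|\xi|_1$ samples, so the typical $m_i^{1/L}$ is small; choosing $n_\ell^{(i)}\sim m_i^{1/L}$ and invoking Jensen's inequality keeps $\sum_i n_\ell^{(i)}$ within budget on average. This interplay is precisely what forces the regime $sL\le 1/2$ and, together with the bookkeeping of the rounding $|\xi|_1\mapsto\lceil|\xi|_1\rceil$, the Heaviside-to-sigmoidal transition, and the Maurey constants, produces the explicit factor $22$ in the stated bound.
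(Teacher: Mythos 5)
Your overall plan --- represent each Fourier mode as an integral of Heaviside features via Lemma~\ref{lema:cos}, factor the resulting periodic pattern through $L-1$ ReLU triangle layers via Lemma~\ref{lema:comp}, and control the Monte--Carlo error by a Maurey-type variance bound --- is the right skeleton and the same one the paper uses. But two of the steps you wave at contain genuine gaps, and they are precisely the places where the paper's proof has to be clever.

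\textbf{Frequency matching.} You set $t=(\omega(\xi)\cdot x+c(\xi))/|\xi|_1$ and then apply Lemma~\ref{lema:cos} at the integer frequency $m(\xi)=\lceil|\xi|_1\rceil$, calling the discrepancy a ``rounding correction absorbed into the truncation step.'' It cannot be absorbed: $\cos(2\pi m(\xi)t)-\cos(2\pi|\xi|_1 t)$ has argument mismatch $2\pi(m(\xi)-|\xi|_1)t$, which ranges over an interval of length up to $2\pi$ as $t$ varies in $[0,1]$, so the pointwise error is $\mc{O}(1)$ for almost every $\xi$, not a small tail term. The paper sidesteps this by normalizing $t$ by an \emph{integer} $n_\xi=2^{L-1}\lceil(|\xi|_1+1)^{1/L}\rceil^L\ge|\xi|_1+1$, i.e.\ $t_\xi(x)=(\xi\cdot x+\theta(\xi))/n_\xi\in[0,1]$, so that $\cos(2\pi n_\xi t_\xi(x))=\cos(2\pi(\xi\cdot x+\theta(\xi)))$ holds \emph{exactly} --- there is no rounding error at all, and $n_\xi$ is already of the form $2^{L-1}\prod_\ell n_\ell$ demanded by Lemma~\ref{lema:comp} (your $m_i=\lceil|\xi_i|_1\rceil$ generically is not).

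\textbf{Width accounting.} Your sampling measure $\md\mu\propto|\xi|_1^{s}|\wh{Ef}|$ with $s>0$ \emph{up}-weights large $|\xi|_1$; the parenthetical claim that it down-weights them is backwards. Under this measure $\mb{E}[m_i^{1/L}]\approx\upsilon_{f,s+1/L,\Omega}/\upsilon_{f,s,\Omega}$, which is not controlled by $\upsilon_{f,s,\Omega}$ at all, so Jensen does not keep $\sum_i n_\ell^{(i)}$ within budget. Even with the paper's measure $\md\mu\propto|\xi|_1^{-s}|\wh f|$, bounding $\mb{E}[N]$ directly fails: the trick the paper uses is to bound $\mb{E}[N^{2sL}]$, exploiting that $t\mapsto t^{2sL}$ is subadditive when $2sL\le 1$ so that $N^{2sL}\le 12^{2sL}\sum_i|\xi_i|_1^{2s}$, whose expectation under that measure is exactly $m\,\upsilon_{f,s,\Omega}/Q$. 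One then applies Markov to $N^{2sL}$ and a second Markov to the $L^2$ error, and a union bound produces a realization where both the width budget and the error bound hold. This is the step where the hypothesis $sL\le1/2$ is really used, and it is the step your Jensen argument does not replace.

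Finally, a smaller point: the paper lets the number $m$ of Monte--Carlo samples float and relates it to the (random) width $N$ through the Markov bound, rather than fixing $m=N$ as you do; fixing $m=N$ over-constrains the balance and forces the hard truncation at $|\xi|_1\le N^L$, which the paper's argument does not need inside this lemma (the low-frequency cut $|\xi|_1\ge 1$ is the only truncation, done once in Theorem~\ref{thm:deep}).
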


\begin{proof}
	By Lemma~\ref{lema:fourinv}, for $f\in\ms{B}^s(\mb{R}^d)$, assume $f$ is real-valued, then
	\[
		f(x)=\int_{\mb{R}^d}\wh{f}(\xi)e^{2\pi i\xi\cdot x}\mr{d}\xi=\int_{\mb{R}^d}\abs{\wh{f}(\xi)}\cos(2\pi(\xi\cdot x+\theta(\xi)))\mr{d}\xi,
	\]
	with proper choice $\theta(\xi)$ such that $0\le\xi\cdot x+\theta(\xi)\le|\xi|_1+1$. For fixed $\xi$, choose $n_\xi=2^{L-1}\lceil(|\xi|_1+1)^{1/L}\rceil^L$ and $t_\xi(x)=(\xi\cdot x+\theta(\xi))/n_\xi$, then $0\le t_\xi(x)\le 1$ and by Lemma~\ref{lema:cos}, we reshape $f(x)$ as
	\[
		f(x)=\int_{\mb{R}^d}\abs{\wh{f}(\xi)}\cos(2\pi n_\xi t_\xi(x))\mr{d}\xi=\dfrac\pi{2}\int_{\mb{R}^d}\abs{\wh{f}(\xi)}\mr{d}\xi\int_0^1\cos(\pi r)\gamma_{,n_\xi}(t_\xi(x),r)\mr{d}r.
	\]
	Define the probability measure
	\begin{equation}\label{eq:prob}
		\mu(\mr{d}\xi,\mr{d}r)=\dfrac1Q|\xi|_1^{-s}\abs{\wh{f}(\xi)}\chi_{(0,1)}(r)\mr{d}\xi\mr{d}r,
\end{equation}
    where $Q$ is the normalized factor that
	\[
		Q=\int_{\mb{R}^d}|\xi|_1^{-s}\abs{\wh{f}(\xi)}\mr{d}\xi\int_0^1\mr{d}r\le\upsilon_{f,s,\Omega}.
	\]
	Therefore $f(x)=\mb{E}_{(\xi,r)\sim\mu}F(x,\xi,r)$ with
	\[
		F(x,\xi,r)=\dfrac{\pi Q}{2}|\xi|_1^s\cos(\pi r)\gamma_{,n_\xi}(t_\xi(x),r).
	\]
	If $\{\xi_i,r_i\}_{i=1}^m$ is an i.i.d. sequence of random samples from $\mu$, and
	\[
		\Tilde{f}=\dfrac1m\sum_{i=1}^mF(x,\xi_i,r_i),
	\]
	then using Fubini's theorem, we obtain
\begin{align*}
		\mb{E}_{(\xi_i,r_i)\sim\mu}\nm{f-\Tilde{f}}{L^2(\Omega)}^2=&\int_\Omega\mb{E}_{(\xi_i,r_i)\sim\mu}\abs{\mb{E}_{(\xi,r)\sim\mu}F(x,\xi,r)-\Tilde{f}(x)}^2\mr{d}x\\
		=&\dfrac1m\int_\Omega\mathrm{Var}_{(\xi,r)\sim\mu}F(x,\xi,r)\mr{d}x\\
		\le&\dfrac1m\mb{E}_{(\xi,r)\sim\mu}\nm{F(\cdot,\xi,r)}{L^\infty(\Omega)}^2.
\end{align*}
Note that
	\[
		\nm{F(\cdot,\xi,r)}{L^\infty(\Omega)}\le\dfrac{\pi Q}{2}|\xi|_1^s,
	\]
we obtain
	\[
		\mb{E}_{(\xi_i,r_i)\sim\mu}\nm{f-\Tilde{f}}{L^2(\Omega)}^2\le\dfrac1m\mb{E}_{(\xi,r)\sim\mu}\nm{F(\cdot,\xi,r)}{L^\infty(\Omega)}^2\le\dfrac{\pi^2Q\upsilon_{f,s,\Omega}}{4m}.
\]
By Markov's inequality, with probability at least $(1+\varepsilon)/(2+\varepsilon)$, for some $\varepsilon>0$ to be chosen later on, we obtain
\begin{equation}\label{eq:L2err}
    \nm{f-\Tilde{f}}{L^2(\Omega)}^2\le\dfrac{(2+\varepsilon)\pi^2Q\upsilon_{f,s,\Omega}}{4m}
\end{equation}
 
	It remains to calculate the number of units in each layer. For each $\gamma_{,n_\xi}(t_\xi(x),r)$, choose $n_1=\dots=n_L=\lceil(|\xi|_1+1)^{1/L}\rceil$, then $n_\xi=2^{L-1}n_1\dots n_L$, and by Lemma~\ref{lema:comp}, $\gamma_{,n_\xi}(\cdot,r)=\gamma_{,n_L}(\cdot,r)\circ\beta_{,n_{L-1}}\circ\dots\circ\beta_{,n_1}$ on $[0,1]$. Lemma~\ref{lema:sigmoidal} shows the Heaviside function $\chi_{[0,\infty)}$ can be approximated by $\sigma$ with at most
	\[
		\max\{3n_1,\dots,3n_{L-1},4n_L\}\le 4\lceil(|\xi|_1+1)^{1/L}\rceil\le 12|\xi|_1^{1/L}
	\]
units in each layer to represent $\gamma_{,n_\xi}(t_\xi(x),r)$. Denote $N$ the total number of units in each layer, then $N\le 12\sum_{i=1}^m|\xi_i|_1^{1/L}$ and
	\[
		\mb{E}_{(\xi_i,r_i)\sim\mu}N^{2sL}\le 12\sum_{i=1}^m\mb{E}_{(\xi_i,r_i)\sim\mu}|\xi_i|_1^{2s}\le\dfrac{12m\upsilon_{f,s,\Omega}}{Q}.
	\]
Invoking Markov inequality again, with probability at least $(1+\varepsilon)/(2+\varepsilon)$, we obtain
\begin{equation}\label{eq:num}
    \dfrac{Q}{m}\le\dfrac{12(2+\varepsilon)\upsilon_{f,s,\Omega}}{N^{2sL}}.
\end{equation}
Combining~\eqref{eq:L2err} and~\eqref{eq:num}, with probability at least $\varepsilon/(2+\varepsilon)$, there exists an $(L,N)$-network $f_N$ such that
	\[
		\nm{f-f_N}{L^2(\Omega)}\le\dfrac{\sqrt3(2+\varepsilon)\pi\upsilon_{f,s,\Omega}}{N^{sL}}\le\dfrac{11\upsilon_{f,s,\Omega}}{N^{sL}},
	\]
 with proper choice of $\varepsilon$ in the last step. Finally, if $f$ is complex-valued, we approximate the real and imaginary parts of the function separately to obtain~\eqref{eq:L2deep}.
\end{proof}

\begin{remark}
We assume $\mathrm{supp}\;\wh{f}\subset\set{\xi\in\mb{R}^d}{|\xi|_1\ge 1}$ in Lemma~\ref{lema:deep} because we want to obtain an upper bound depending only on $\upsilon_{f,s,\Omega}$. If we give up this condition, then the upper bound in Theorem~\ref{thm:deep} changes to $C\nm{f}{\ms{B}^s(\mb{R}^d)}/N^{sL}$ for some dimension-free constant $C$. The proof is essentially the same provided that the probability measure~\eqref{eq:prob} is replaced by
\[
\mu(\mr{d}\xi,\mr{d}r)=\dfrac1Q(1+|\xi|_1)^{-s}\abs{\wh{f}(\xi)}\chi_{(0,1)}(r)\mr{d}\xi\mr{d}r,
\]
We leave it to the interested reader.
\end{remark}

\begin{proof}[Proof of Theorem~\ref{thm:deep}]
We write $f=f_1+f_2$ with
	\[
		f_1(x)=\int_{|\xi|_1<1}\wh{f}(\xi)e^{2\pi i\xi\cdot x}\mr{d}\xi,\qquad f_2(x)=\int_{|\xi|_1\ge 1}\wh{f}(\xi)e^{2\pi i\xi\cdot x}\mr{d}\xi.
	\]
	Then $\upsilon_{f_1,1,\Omega}\le\upsilon_{f,s,\Omega}$ and $\upsilon_{f_2,s,\Omega}\le\upsilon_{f,s,\Omega}$ because
	\[
		\wh{f}_1(\xi)=\wh{f}(\xi)\chi_{[0,1)}(|\xi|_1)\qquad\text{and}\qquad \wh{f}_2(\xi)=\wh{f}(\xi)\chi_{[1,\infty)}(|\xi|_1).
	\]
We approximate $f_1$ with an $(L,n_1)$-network with $n_1=\lceil N/6\rceil$. By applying Lemma~\ref{lema:low} to $f_1$, we obtain, there exists an $(1,n_1)$-network $f_{1,n_1}$ such that
	\[
		\nm{f_1-f_{1,n_1}}{L^2(\Omega)}\le\dfrac{2\upsilon_{f_1,1,\Omega}}{n_1^{1/2}}\le\dfrac{2\sqrt{6}\upsilon_{f,s,\Omega}}{N^{sL}}.
	\]
Noting that an $(1,n_1)$-network can be represented by an $(L,n_1)$-network. We just need to fill the rest of the hidden layers with
	\[
		t=\begin{cases}
			\text{ReLU}(t), & t\ge 0,\\
			-\text{ReLU}(-t), & t< 0.
		\end{cases}
	\]
Secondly, we approximate $f_2$ with an $(L,n_2)$-network with $n_2=\lceil 5N/6\rceil$ and obtain the error estimate. Applying Lemma~\ref{lema:deep} we obtain, there exists an $(L,n_2)$ network $f_{2,n_2}$ such that
	\[
		\nm{f_2-f_{2,n_2}}{L^2(\Omega)}\le\dfrac{22\pi\upsilon_{f_2,s,\Omega}}{n_2^s}\le\dfrac{22\sqrt{6/5}\pi\upsilon_{f,s,\Omega}}{N^{sL}}.
	\]
These together with the triangle inequality give the estimate~\eqref{eq:thm2} and the total number of units in each layer is
	\[
		n_1+2n_2=\lceil N/6\rceil+\lceil 5N/6\rceil\le N+2.
	\]
If $f$ is a real-valued function, then we let $f_N=\mathrm{Re}(f_{1,n_1}+f_{2,n_2})$, and the upper bound~\eqref{eq:thm2} still holds true.
\end{proof}

\begin{remark}
The activation function of the last hidden layer of the $(L,N)$-network in Theorem~\ref{thm:deep} may be replaced by many other familiar activation functions such as Hyperbolic tangent, SoftPlus, ELU, Leaky ReLU, ReLU$^k$ and so on. Because all these activation functions can be reduced to sigmoidal functions by certain shifting and scaling argument; e.g., for SoftPlus, we observe that $\text{SoftPlus}(t)-\text{SoftPlus}(t-1)$ is a sigmoidal function. Unfortunately, it is not easy to change ReLU of the first $L-1$ hidden layers by other activation functions.
\end{remark}

In what follows, we shall show that Theorem~\ref{thm:deep} is sharp if the activation function of the last hidden layer is Heaviside function. This example is adopted from~\cite{BreslerNagaraj:2020}. For readers' convenience, We reserve the proof in Appendix~\ref{apd:lower}.
\begin{theorem}\label{thm:lower}
For any fixed positive integers $L,N$ and real numbers $\varepsilon,s$ with $0<\varepsilon,sL\le 1/2$, there exists $f\in\ms{B}^s(\mb{R}^d)$ satisfying $\upsilon_{f,s,\Omega}\le1+\varepsilon$ such that for any $(L,N)$-network $f_N$ whose activation function $\sigma$ in the last layer is the Heaviside function $\chi_{[0,\infty)}$, there holds
\begin{equation}\label{eq:lower}
		\nm{f-f_N}{L^2(\Omega)}\ge\dfrac{1-\varepsilon}{8N^{sL}}.
\end{equation}
\end{theorem}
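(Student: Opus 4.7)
The plan is to exhibit a hard high-frequency target and combine a Fourier-side spectral-norm estimate with a piece-counting argument for the network output. I would fix once and for all a cutoff $\psi\in\ms{S}(\mb{R}^d)$ with $\psi\equiv 1$ on $\Omega$ and $\int_{\mb{R}^d}\abs{\wh\psi(\eta)}\,\md\eta=1$, and take
\[
f(x)=c_n\,\psi(x)\cos(2\pi n x_1),
\]
where the frequency $n$ will be of order $N^L$ and the amplitude $c_n\sim n^{-s}$ will be tuned so that $\upsilon_{f,s,\Omega}\le 1+\varepsilon$.

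For the spectral-norm side, $\wh{f}(\xi)=\tfrac{c_n}{2}[\wh\psi(\xi-ne_1)+\wh\psi(\xi+ne_1)]$, so translating $\xi\mapsto\eta\pm ne_1$ and invoking the triangle inequality $\abs{\xi}_1\le\abs{\eta}_1+n$ together with the subadditivity $(a+b)^s\le a^s+b^s$ for $0\le s\le 1$ (which applies because $sL\le 1/2$) yields
\[
\int_{\mb{R}^d}\abs{\xi}_1^s\abs{\wh{f}(\xi)}\,\md\xi\le c_n\Lr{n^s+\int_{\mb{R}^d}\abs{\eta}_1^s\abs{\wh\psi(\eta)}\,\md\eta}.
\]
Setting $c_n$ equal to the reciprocal of the factor in parentheses forces $\upsilon_{f,s,\Omega}\le 1$, and once $n$ is large enough that the second summand inside the parentheses is $\le\varepsilon n^s$, one has $c_n\ge(1-\varepsilon)n^{-s}$.

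For the lower bound I would exploit a one-dimensional slice argument. For any $(L,N)$-network $f_N$ whose last-layer activation is $\chi_{[0,\infty)}$, freezing $(x_2,\dots,x_d)\in[0,1]^{d-1}$ makes the restriction $g(x_1){:}=f_N(x_1,x_2,\dots,x_d)$ a 1D $(L,N)$-network output, which is piecewise constant on $[0,1]$. An inductive count (each 1D ReLU hidden layer maps a piecewise linear function with $p$ pieces to one with at most $(N+1)p$ pieces; the final Heaviside layer then converts $P_{L-1}$ linear pieces into at most $NP_{L-1}$ constant pieces) gives $P\le(N+1)^L$. Using $\abs{\int_I\cos(2\pi n t)\,\md t}\le\min(\abs{I},1/(\pi n))$ on each piece $I$, every term $(\int_I\cos)^2/\abs{I}$ is bounded by $1/(\pi n)$, so the best-constant $L^2$-projection formula $\nm{\cos-\tilde g}{L^2}^2=\tfrac12-\sum_I(\int_I\cos)^2/\abs{I}$ yields
\[
\int_0^1\abs{\cos(2\pi n x_1)-\tilde g(x_1)}^2\,\md x_1\ge\frac12-\frac{P}{\pi n}
\]
for every piecewise constant $\tilde g$ with at most $P$ pieces.

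Finally I would pick $n$ so that $P/(\pi n)\le 1/4$, e.g.\ $n=\lceil 4(N+1)^L/\pi\rceil$, producing a slice bound $\int_0^1\abs{c_n\cos-g}^2\,\md x_1\ge c_n^2/4$; integrating over $(x_2,\dots,x_d)$ yields $\nm{f-f_N}{L^2(\Omega)}\ge c_n/2$. Because $sL\le 1/2$, the factor $n^s$ is bounded by a universal constant times $N^{sL}$ (since $(N+1)^{sL}\le 2^{sL}N^{sL}\le\sqrt{2}\,N^{sL}$ and analogous estimates absorb the $5/\pi$ from $\lceil\cdot\rceil$), and chasing the constants delivers the stated bound $(1-\varepsilon)/(8N^{sL})$. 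The delicate step is the piece-counting together with the constant-tracking: the 1D slice argument is essential in order to avoid polynomial losses in the ambient dimension $d$, and the growth factor $(N+1)^L$ must be compensated by the restriction $sL\le 1/2$ so that the final bound carries no hidden $L$- or $d$-dependence.
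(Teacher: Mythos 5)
Your overall strategy matches the paper's: take a high-frequency cosine times a smoothing envelope, tune the amplitude to $\upsilon_{f,s,\Omega}\le 1+\varepsilon$, slice to one dimension, and count pieces of the network output. Your piece-counting $P\le (N+1)^L$ is slightly tighter than what the paper extracts from the Telgarsky lemma ($(2N)^L$), and your lower-bound mechanism is genuinely different: you use the exact $L^2$-projection identity $\nm{\cos-\tilde g}{L^2(0,1)}^2 = \tfrac12 - \sum_I(\int_I\cos)^2/\abs{I}\ge\tfrac12 - P/(\pi n)$, whereas the paper uses a pointwise sign-comparison argument — on each of the (many) uniform subintervals where the piecewise-constant $f_N$ does not change sign, there is a half-period subinterval where $f$ and $f_N$ have opposite signs, so $\abs{f-f_N}\ge\abs{f}$ there. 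Your projection argument is cleaner in principle and, combined with the tighter piece count, would give a better constant if it applied.

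However, there is a genuine gap. You "fix once and for all a cutoff $\psi\in\ms{S}(\mb{R}^d)$ with $\psi\equiv 1$ on $\Omega$ and $\int_{\mb{R}^d}\abs{\wh\psi(\eta)}\,\md\eta=1$." Such a $\psi$ does not exist. From $\psi(0)=1$ and $\int\abs{\wh\psi}\ge\abs{\int\wh\psi}=\abs{\psi(0)}=1$, equality would force $\wh\psi$ to have a.e.\ constant phase, say $\wh\psi\ge 0$; but then for any other $x_0\in\Omega$ with $\psi(x_0)=1$, the equality case in $1=\labs{\int e^{2\pi ix_0\cdot\xi}\wh\psi(\xi)\,\md\xi}\le\int\abs{\wh\psi}$ forces $e^{2\pi ix_0\cdot\xi}$ to be a.e.\ constant on $\mathrm{supp}\,\wh\psi$, so $\mathrm{supp}\,\wh\psi$ lies in a hyperplane and hence has measure zero — impossible for $0\neq\psi\in\ms{S}(\mb{R}^d)$. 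Consequently $C_0{:}=\int\abs{\wh\psi}>1$ strictly, the normalization $c_n\ge(1-\varepsilon)n^{-s}$ (compatible with $\upsilon_{f,s,\Omega}\le 1+\varepsilon$) fails, and the final constant degrades by at least a factor $C_0$; worse, for tensor-product cutoffs $C_0$ grows exponentially in $d$, putting your dimension-freeness in doubt unless a cleverer $\psi$ is built. The paper sidesteps exactly this by choosing the Gaussian $e^{-\pi\abs{x}^2/R}$ (whose $L^1$-Fourier mass is exactly $1$) at the cost of being only $\ge 1-\varepsilon$ on $\Omega$; this works because the paper's pointwise sign argument tolerates a slowly varying envelope, whereas your $L^2$-projection identity truly requires the envelope to be constant on each $x_1$-slice. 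The two choices — envelope and lower-bound mechanism — are coupled and cannot be mixed freely; either keep your projection argument and work out how small $\int\abs{\wh\psi}$ can actually be made (independent of $d$), or adopt the Gaussian envelope and switch to a pointwise/Cauchy--Schwarz argument that accommodates it.
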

\section{Conclusion}~\label{sec:conclu}
We discuss the analytical functional properties of the spectral Barron space. The sharp embedding between the spectral Barron spaces and various classical function spaces have been established. The approximation rate has been proved for the deep ReLU neural networks when the spectral Barron space with a small smoothness index is employed as the target function space. There are still some unsolved problems, such as the sup-norm error and the higher-order convergence results for larger $s$, the relations among Barron type spaces, variational space and the Radon bounded variation space as well as understanding how these spaces are related to the classical function spaces, which will be pursued in the subsequent works.
\bibliography{spectralBarron}

@article{Barron:1993,
	author = {Barron, A. R.},
  title = {Universal approximation bounds for superpositions of a sigmoidal function},
  journal = {IEEE Trans. Inform. Theory},
  volume = {39},
  number = {3},
  pages = {930-945},
	year = {1993}
}

@article{Barron:2008,
	author = {Barron, A. R. and Cohen, A. and Dahmen, W. and DeVore, R. A.},
	title = {Approximation and learning by greedy algorithms},
	journal = {Ann. Statist.},
	volume = {36},
	number = {1},
	pages = {64-94},
	year = {2008}
}

@article{E:2019,
	author = {E, W. and Ma, C. and Wu, L.},
	title = {A Priori Estimates of the Population Risk for Two-layer Neural Networks},
	journal = {Commun. Math. Sci.},
	volume = {17},
	number = {5},
	pages = {1407-1425},
	year = {2019}
}

@article{EMW:2022,
	author = {E, W. and Ma, C. and Wu, L.},
	title = {The {B}arron Space and the Flow-Induced Function Spaces for Neural Network Models},
	journal = {Constr. Approx.},
	volume = {55},
	pages = {369-406},
	year = {2022}
}

@article{E:2022,
	author = {E, W. and Wojtowytsch, S.},
	title = {Some observations on high-dimensional partial differential equations with {B}arron data},
	journal = {Proceedings of Machine Learning Research},
	volume = {145},
	pages = {253-269},
	year = {2022}
}

@article{Wojtowytsch:2022,
	author = {E, W. and Wojtowytsch, S.},
 	title = {Representation formulas and pointwise properties for {B}arron functions},
 	journal = {Calc. Var.},
 	volume = {61},
 	pages = {46},
 	year = {2022}
}

@article{Barron:2018,
	author = {Klusowski, J. M. and Barron, A. R.},
	title = {Approximation by Combinations of {ReLU} and Squared {ReLU} Ridge functions With $\ell^1$ and $\ell^0$ Controls},
	journal = {IEEE Trans. Inform. Theory},
	volume = {64},
	number = {12},
	pages = {7649-7656},
	year = {2018}
}

@article{Ma:2017,
	author = {Lee, H. and Ge, R. and Ma, T. Y. and Risteski, A. and Arora, S.},
	title = {On the ability of neural nets to express distributions},
	journal = {Proceedings of Machine Learning Research},
	volume = {65},
	pages = {1271-1296},
	year = {2017}
}

@article{Lu:2021,
	author = {Lu, J. F. and Lu, Y. L. and Wang, M.},
	title = {A Priori Generalization Analysis of the {Deep Ritz Method} for Solving High Dimensional Elliptic Partial Differential Equations},
	journal = {Proceedings of Machine Learning Research},
	volume = {134},
	pages = {3196-3241},
	year = {2021}
}

@article{MengMing:2022,
	author = {Meng, Y. and Ming, P. B.},
	title = {A New Function Space from {B}arron Class and Application to Neural Network Approximation},
	journal = {Commun. Comput. Phys.},
	volume = {32},
	number = {5},
	pages = {1361-1400},
	year = {2022}
}

@article{Siegel:2020,
	author = {Siegel, J. W. and Xu, J. C.},
	title = {Approximation rates for neural networks with general activation functions},
	journal = {Neural Networks},
	volume = {128},
	pages = {313-321},
	year = {2020}
}

@article{Siegel:2023,
	author = {Siegel, J. W. and Xu, J. C.},
	title = {Characterization of the Variation Spaces Corresponding to Shallow Neural Networks},
	journal = {Constr. Approx.},
    volume = {57},
	year = {2023},
    number = {3},
    pages = {1109-1132}
}

@article{Siegel:2022,
	author = {Siegel, J. W. and Xu, J. C.},
	title = {High-order approximation rates for shallow neural networks with cosine and {ReLU}$^k$ activation functions},
	journal = {Appl. Comput. Harmon. Anal.},
	volume = {58},
	pages = {1-26},
	year = {2022}
}

@article{Xu:2020,
	author = {Xu, J. C.},
	title = {Finite Neuron Method and Convergence Analysis},
	journal = {Commun. Comput. Phys.},
	volume = {28},
	number = {5},
	pages = {1707-1745},
	year = {2020}
}

@article{Caragea:2023,
	author = {Caragea, A. and Petersen, P. and Voigtlaender, F.},
	title = {Neural network approximation and estimation of classifiers with classification boundary in a {B}arron class},
    journal = {Ann. Appl. Probab.},
    volume = {33},
    number = {4},
	pages = {3039-3079},
	year = {2023}
}

@article{BreslerNagaraj:2020,
	author = {Bresler, G. and Nagaraj, D.},
	title = {Sharp Representation Theorems for {ReLU} Networks with Precise Dependence on Depth},
	journal = {Advances in Neural Information Processing Systems},
	volume = {33},
	pages = {10697-10706},
	year = {2020}
}

@article{Liflyand:2012,
   author = {Liflyand, E. and Samko, S. and Trigub, R.},
   title = {The {W}iener algebra of absolutely convergent {F}ourier integrals: An overview},
   journal = {Anal. Math. Phys.},
   volume = {2},
   number = {1},
   pages = {1-68},
   year = {2012}
}

@article{Triebel:1995,
	author = {Sickel, W. and Triebel, H.},
	title = {H\"older inequalities and sharp embeddings in function spaces of {$B_{p,q}^s$ and $F_{p,q}^s$} type},
	journal = {Z. Anal. Anwend.},
	volume = {14},
	number = {1},
	pages = {105-140},
	year = {1995}
}

@book{Brezis:2011,
	author = {Brezis, H.},
	title = {{Functional Analysis, Sobolev Spaces and Partial Differential Equations}},
	series = {Universitext},
	publisher = {Springer New York, NY},
	year = {2011}
}

@book{Lieb:2001,
	author = {Lieb, E. H. and Loss, M.},
	title = {Analysis},
	series = {Graduate Studies in Mathematics},
	volume = {14},
	publisher = {American Mathematical Society},
	edition = {2nd},
	year = {2001}
}

@book{Luke:1962,
	author = {Luke, Y. L.},
	title = {{Integrals of Bessel Functions}},
	publisher = {McGraw-Hill Book Company},
	year = {1962}
}

@book{Luke:1969,
	author = {Luke, Y. L.},
	title = {{The Special Functions and Their Approximations, Volume I}},
	series = {Mathematics in Science and Engineering},
	volume = {53},
	publisher = {Academic Press},
	year = {1969}
}

@book{MathaiSaxena:1973,
	author = {Mathai, A. M. and Saxena, R. K.},
	title = {{Generalized Hypergeometric Functions with Applications in Statistics and Physical Sciences}},
	series = {Lecture Notes in Mathematics},
	volume = {348},
	publisher = {Springer Berlin, Heidelberg},
	year = {1973}
}

@book{Stein:1971,
	author = {Stein, E. M. and Weiss, G.},
	title = {{Introduction to Fourier Analysis on Euclidean Spaces}},
	series = {Princeton Mathematical Series},
	publisher = {Princeton University Press},
	year = {1971}
}

@book{Triebel:1983,
	author = {H. Triebel},
	title = {{Theory of Function Spaces}},
	series = {Modern Birkh\"auser Classics},
	publisher = {Birkh\"auser},
	year = {1983}
}

@book{Grafakos:2014,
    author = {L. Grafakos},
	title = {{Classical Fourier Analysis}},
	series = {Graduate Texts in Mathematics},
	volume = {249},
	publisher = {{Springer New York, NY}},
	edition = {3rd},
	year = {2014}
}

@article{Helson:1959,
   author = {Helson, H. and Kahane, J.-P. and Katznelson, Y. and Rudin, W.},
    title = {The function which operate on {F}ourier transforms},
    journal = {Acta Math.},
    volume = {102},
    year = {1959},
    pages = {135--157}
    }

@article{BenyiOh:2013,
	author = {B\'enyi, \'A. and Oh, T.},
	title = {The {S}obolev inequality on the torus revisited},
	journal = {Publ. Math. Debrecen},
	volume = {83},
	number = {3},
	year = {2013},
	pages = {359--374}
}

@book{ReitherStegeman:2000,
	author = {Reiter, H. and Stegeman, J. D.},
	title = {{Classical Harmonic Analysis and Locally Compact Groups}},
	publisher = {Clarendon Press, Oxford},
	year = {2000}
}

@article{Kato:2020,
   author = {Kato, T. and Sugimoto, M. and Tomita, N.},
   title = {Nonlinear operations on a class of modulation spaces},
   journal = {J. Funct. Anal.},
   volume = {278},
   number = {9},
   pages = {108447},
   year = {2020}
}

@book{Hormander:1963,
	author = {H\"ormander, L.},
	title = {{Linear Partial Differential Operators}},
	series = {Grundlehren der mathematischen Wissenschaften},
	volume = {116},
	publisher = {Springer Berlin, Heidelberg},
	year = {1963}
}

@article{Messina:2001,
	author = {Messina, F. and Rodino, L.},
	title = {Local solvability for nonlinear partial differential equations},
	journal = {Nonlinear Anal.},
	volume = {47},
	number = {5},
	pages = {2917-2927},
	year = {2001}
}

@article{Pilipovic:2010,
	author = {Pilipovi\'c, S. and Teofanov, N. and Toft, J.},
	title = {Micro-local analysis in {F}ourier {L}ebesgue and modulation spaces: part {II}},
	journal = {J. Pseudo-Differ. Oper. Appl.},
	volume = {1},
	number = {3},
	pages = {341-376},
	year = {2010}
}

@article{Bolcskei:2021,
	author = {Elbr\"achter, D. and Perekrestenko, D. and Grohs, P. and B\"olcskei, H.},
	title = {Deep neural network approximation theory},
	journal = {IEEE Trans. Inform. Theory},
	volume = {67},
	number = {5},
	pages = {2581-2623},
	year = {2021}
}

@article{Telgarsky:2016,
  title = {Benefits of depth in neural networks},
  author = {Telgarsky, M.},
  journal = {Proceedings of Machine Learning Research},
  pages = {1517--1539},
  year = {2016},
  volume = {49}
}

@article{Barron:1992,
  title={Neural net approximation},
  author={Barron, A. R.},
  journal={Proc. 7th Yale Workshop on Adaptive and Learning Systems},
  volume={1},
  pages={69--72},
  year={1992}
}

@article{Barron:1994,
	author = {Barron, A. R.},
	title = {Approximation and estimation bounds for artificial neural networks},
	journal = {Mach. Learn.},
	volume = {14},
	pages = {115-133},
	year = {1994}
}

@book{DGSM60:2014,
	author = {Mikhailets, V. A. and Murach, A. A.},
	title = {{H\"ormander Spaces, Interpolation, and Elliptic Problems}},
	series = {De Gruyter Studies in Mathematics},
	volume = {60},
	publisher = {Walter de Gruyter GmbH, Berlin/Boston},
	year = {2014}
}

@article{Ivec:2021,
	author = {Ivec, I. and Vojnovi\'c, I.},
	title = {H-distributions on {H}\"ormander spaces},
	journal = {J. Math. Anal. Appl.},
	volume = {495},
	year = {2021},
	number = {1},
	pages = {124717}
}

@article{Wiener:1932,
	author = {Wiener, N.},
	title = {Tauberian theorems},
	journal = {Ann. of Math. (2)},
	volume = {33},
	year = {1932},
	number = {1},
	pages = {1-100}
}

@article{Grochenig:2002,
	author = {Galperin, Y. V. and Gr\"ochenig, K.},
	title = {Uncertainty principles as embeddings of modulation spaces},
	journal = {J. Math. Anal. Appl.},
	volume = {274},
	year = {2002},
	number = {1},
	pages = {181-202}
}

@article{Levy:1935,
	 author = {L\'evy, P.},
	 title = {Sur la convergence absolue des s\'eries de {F}ourier},
	 journal = {Compositio Math.},
	 volume = {1},
	 year = {1935},
	 pages = {1-14}
}

@article{Bresler:2020,
	author = {Bresler, G. and Nagaraj, D.},
	title = {A Corrective View of Neural Networks: Representation, Memorization and Learning},
	journal = {Proceedings of Machine Learning Research},
	volume = {125},
	pages = {848-901},
	year = {2020}
}

@article{Kutyniok:2022,
	 author = {Gribonval, R. and Kutyniok, G. and Nielsen, M. and Voigtlaender, F.},
	 title = {Approximation spaces of deep neural networks},
	 journal = {Constr. Approx.},
	 volume = {55},
	 year = {2022},
	 number = {1},
	 pages = {259-367}
}

@article{ChenLuLu:2021,
	author = {Chen, Z. A. and Lu, J. F. and Lu, Y. L.},
	title = {On the Representation of Solutions to Elliptic {PDE}s in {B}arron Spaces},
	journal = {Advances in Neural Information Processing Systems},
	volume = {34},
	year = {2021},
	pages = {6454--6465}
}

@article{SiegelXu:2022,
  title={Sharp bounds on the approximation rates, metric entropy, and $n$-widths of shallow neural networks},
  author={Siegel, J. W. and Xu, J. C.},
  journal={Found. Comput. Math.},
  volume={24},
  number={2},
  pages={481--537},
  year={2024}
}

@article{Chen:2023,
author = {Chen, Z. A. and Lu, J. F. and Lu, Y. L. and Zhou, S. X.},
title = {A Regularity Theory for Static {S}chr\"odinger Equations on $\mathbb{R}^d$ in Spectral {B}arron Spaces},
journal = {SIAM J. Math. Anal.},
volume = {55},
number = {1},
pages = {557-570},
year = {2023}
}

@article{Eldan:2016,
	author = {Eldan, R. and Shamir, O.},
	title = {The Power of Depth for Feedforward Neural Networks},
	journal = {Proceedings of Machine Learning Research},
	volume = {49},
	pages = {907-940},
	year = {2016}
}

@article{Du:2019,
	author = {Montanelli, H. and Du, Q.},
	title = {New error bounds for deep {ReLU} networks using sparse grids},
	journal = {SIAM J. Math. Data Sci.},
	volume = {1},
	year = {2019},
	number = {1},
	pages = {78-92}
}

@article{LuShenYangZhang:2021,
	author = {Lu, J. F. and Shen, Z. W. and Yang, H. Z. and Zhang, S. J.},
	title = {Deep network approximation for smooth functions},
	journal = {SIAM J. Math. Anal.},
	volume = {53},
	year = {2021},
	number = {5},
	pages = {5465-5506}
}

@article{Suzuki:2021,
	author = {Tsuji, K. and Suzuki, T.},
	title = {Estimation error analysis of deep learning on the regression problem on the variable exponent {B}esov space},
	journal = {Electron. J. Stat.},
	volume = {15},
	year = {2021},
	number = {1},
	pages = {1869-1908}
}

@article{Shamir:2022,
	author = {Safran, I. and Eldan, R. and Shamir, O.},
	title = {Depth separations in neural networks: What is actually being separated?},
	journal = {Constr. Approx.},
	volume = {55},
	year = {2022},
	number = {1},
	pages = {225-257}
}

@article{MaSiegelXu:2022,
	author = {Ma, L. M. and Siegel, J. W. and Xu, J. C.},
	title = {Uniform approximation rates and metric entropy of shallow neural networks},
	journal = {Res. Math. Sci.},
	volume = {9},
	year = {2022},
	number = {3},
	pages = {46}
}

@article{Pinsky:1997,
    author = {Pinsky, M. A. and Taylor, M. E.},
    title = {Pointwise {F}ourier inversion: a wave equation approach},
    journal = {J. Fourier Anal. Appl.},
    year = {1997},
    volume = {3},
    number = {6},
    pages = {647--703}
    }

@article{Taibleson:1964,
      author = {Taibleson, M. H.},
      title = {On the Theory of {L}ipschits Spaces of Distributions on {E}uclidean $n$-Space. {I}. {P}rincipal Properties},
      journal ={J. Math. Mech.},
      volume = {13}, 
      number = {3},
      year = {1964},
      pages = {407--479}
      }

@article{Taibleson:1965,
      author = {Taibleson, M. H.},
      title = {On the Theory of {L}ipschits Spaces of Distributions on {E}uclidean n-Space. {II}. {T}ranslation Invariant  
                Operators, Duality, and Interpolation},
      journal ={J. Math. Mech.},
      volume = {14}, 
      number = {5},
      year = {1965},
      pages = {821--839}
      }

@article{Taibleson:1966,
      author = {Taibleson, M. H.},
      title = {On the Theory of {L}ipschits Spaces of Distributions on {E}uclidean n-Space {II}. {S}moothness and Integrability of {F}ourier Transforms, Smoothness of Convolution Kernels},
      journal ={J. Math. Mech.},
      volume = {15}, 
      number = {6},
      year = {1966},
      pages = {973--981}
      }

@article{ParhiNowak:2022,
    author = {Parhi, R. and Nowak, R. D.},
    title = {What Kinds of Functions Do Deep Neural Networks Learn? {I}nsights from Variational Spline Theory},
    journal = {SIAM J. Math. Data Sci.},
    year = {2022},
    volume = {4}, 
    number = {2},
    pages = {464--489}
}

@article{ParhiNowak:2023,
    author = {Parhi, R. and Nowak, R. D.},
    title = {Near-Minimax Optimal Estimation with Shallow {ReLU} Neural Networks},
    journal = {IEEE Trans. Inform. Theory},
    volume = {69},
    number = {2},
    pages = {1125--1140},
    year = {2023}
}

@article {Schmidt-Hieber:2020,
    AUTHOR = {Schmidt-Hieber, J.},
     TITLE = {Nonparametric regression using deep neural networks with
              {R}e{LU} activation function},
   JOURNAL = {Ann. Statist.},
  FJOURNAL = {The Annals of Statistics},
    VOLUME = {48},
      YEAR = {2020},
    NUMBER = {4},
     PAGES = {1875--1897}
}

@article {Grohs:2023a,
    AUTHOR = {Grohs, P. and Ibragimov, S. and Jentzen, A.
              and Koppensteiner, S.},
     TITLE = {Lower bounds for artificial neural network approximations: A
              proof that shallow neural networks fail to overcome the curse
              of dimensionality},
   JOURNAL = {J. Complexity},
  FJOURNAL = {Journal of Complexity},
    VOLUME = {77},
      YEAR = {2023},
     PAGES = {Paper No. 101746, 47 pp}
}

@article {Grohs:2023b,
    AUTHOR = {Grohs, P. and Voigtlaender, F.},
     TITLE = {Sobolev-type embeddings for neural network approximation
              spaces},
   JOURNAL = {Constr. Approx.},
  FJOURNAL = {Constructive Approximation. An International Journal for
              Approximations and Expansions},
    VOLUME = {57},
      YEAR = {2023},
    NUMBER = {2},
     PAGES = {579--599}
}

\appendix
\section{Some proof details}
\subsection{Proof for~\eqref{eq:phiR} and~\eqref{eq:phiRs}}\label{apd:phiR}
\begin{proof}
Note that $\wh{\phi}_R$ is a radial function. By Lemma~\ref{lema:fourinv} and $\wh{\phi}_R\in L^1(\mb{R}^d)$,
\begin{align*}
\phi_R(x)=&\int_{B_R}\Lr{1-\dfrac{\abs{\xi}^2}{R^2}}^\delta e^{2\pi ix\cdot\xi}\mr{d}\xi\\
=&\int_{-R}^Re^{2\pi i\abs{x}\xi_1}\mr{d}\xi_1\int_{\xi_2^2+\dots\xi_d^2<R^2-\xi_1^2}\Lr{1-\dfrac{\abs{\xi}^2}{R^2}}^\delta\mr{d}\xi_2\dots\mr{d}\xi_d.
\end{align*}
Performing the polar transformation and changing the variable $t=r^2/(R^2-\xi_1^2)$, we obtain
\begin{align*}
&\quad\int_{\xi_2^2+\dots\xi_d^2<R^2-\xi_1^2}\Lr{1-\dfrac{\abs{\xi}^2}{R^2}}^\delta\mr{d}\xi_2\dots\mr{d}\xi_d\\
&=\om_{d-2}\int_0^{\sqrt{R^2-\xi_1^2}}r^{d-2}\Lr{1-\dfrac{\xi_1^2+r^2}{R^2}}^\delta\mr{d}r\\
&=\dfrac{\om_{d-2}}2R^{d-1}\Lr{1-\dfrac{\xi_1^2}{R^2}}^{\delta+(d-1)/2}\int_0^1t^{(d-3)/2}(1-t)^\delta\mr{d}t\\
&=\dfrac{\om_{d-2}}2R^{d-1}\Lr{1-\dfrac{\xi_1^2}{R^2}}^{\delta+(d-1)/2}B\Lr{\dfrac{d-1}{2},\delta+1}.
\end{align*}
Substituting this equation into the previous one and changing the variable $\xi_1=R\cos\theta$, we get
\begin{align*}
		\phi_R(x)=&\dfrac{\om_{d-2}}2R^{d-1}B\Lr{\dfrac{d-1}{2},\delta+1}\int_{-R}^R\Lr{1-\dfrac{\xi_1^2}{R^2}}^{\delta+(d-1)/2}e^{2\pi i\abs{x}\xi_1}\mr{d}\xi_1\\
		=&\dfrac{\pi^{(d-1)/2}\Gamma(\delta+1)R^d}{\Gamma(\delta+(d+1)/2)}\int_0^\pi\cos(2\pi\abs{x}R\cos\theta)\sin^{2\delta+d}\theta\mr{d}\theta\\
		=&\dfrac{\Gamma(\delta+1)}{\pi^\delta\abs{x}^{\delta+d/2}}R^{-\delta+d/2}J_{\delta+d/2}(2\pi\abs{x}R),
\end{align*}
where we have used
\[
J_\nu(x)=\dfrac{(x/2)^\nu}{\pi^{1/2}\Gamma((d+1)/2)}\int_0^\pi\cos(x\cos\theta)\sin^{2\nu}\theta\mr{d}\theta,\qquad\nu>-\dfrac12
\]
in the last step. The above integral representation of the first kind of Bessel function may be found in~\cite{Luke:1962}*{\S~1.4.5, Eq. (4)}.

For $s\ge 0$, a direct calculation gives
\begin{align*}
		\upsilon_{\phi_R,s}&=\int_{B_R}\abs{\xi}^s\Lr{1-\dfrac{\abs{\xi}^2}{R^2}}^\delta\mr{d}\xi
		=\om_{d-1}\int_0^Rr^{s+d-1}\Lr{1-\dfrac{r^2}{R^2}}^\delta\mr{d}r\\
		&=\dfrac{\om_{d-1}}2R^{s+d}\int_0^1t^{(s+d)/2-1}(1-t)^\delta\mr{d}t\\
		&=\dfrac{\om_{d-1}}2B\Lr{\dfrac{s+d}2,\delta+1}R^{s+d}.
\end{align*}
Therefore $\phi_R\in\ms{B}^s(\mb{R}^d)$.    
\end{proof}

\subsection{Proof for~\eqref{eq:ex3}}\label{apd:ex3}
\begin{proof}
To prove~\eqref{eq:ex3}, we start with the following representation formula. If $\wh{f}\in L^1(\mb{R}^d)$ is a radial function with $\wh{f}(\xi)=g_0(\abs{\xi})$, then
\begin{equation}\label{eq:radial}
f(x)=\dfrac{2\pi}{\abs{x}^{d/2-1}}\int_0^\infty g_0(r)r^{d/2}J_{d/2-1}(2\pi\abs{x}r)\mr{d}r.
\end{equation}

If $d=1$, then using Lemma~\ref{lema:fourinv}, we obtain
\[
f(x)=\int_\mb{R}g_0(\abs{\xi})e^{2\pi ix\xi}\mr{d}\xi=2\int_0^\infty g_0(r)\cos(2\pi\abs{x}r)\mr{d}r,
\]
which gives~\eqref{eq:radial}, where we have used the relation~\cite{Luke:1962}*{\S~1.4.6, Eq. (7)}
\[
J_{-1/2}(x)=\sqrt{\dfrac2{\pi x}}\cos(x)
\]
in the last step.

For $d\ge 2$, combining Lemma~\ref{lema:fourinv} and~\cite{Stein:1971}*{Ch. IV, Theorem 3.3}, we obtain~\eqref{eq:radial}, which immediately implies
\begin{align*}
f_p(x)&=\dfrac{2\pi}{\abs{x}^{d/2-1}}\int_0^1 r^{d(1/2-1/p')}J_{d/2-1}(2\pi\abs{x}r)\mr{d}r\\
&=\omega_{d-1}\int_0^1r^{d/p-1}{}_0F_1(;d/2;-\pi^2\abs{x}^2r^2)\mr{d}r,
\end{align*}
where we have used the relation~\cite{Luke:1962}*{\S~1.4.1, Eq. (1)}
\[
J_\nu(x)=\dfrac{(x/2)^\nu}{\Gamma(\nu+1)}{}_0F_1(;\nu+1;-x^2/4)
\]
in the last step. Changing the variable $t=r^2$ and using the identity~\cite{Luke:1962}*{\S~1.3.2, Eq. (2)}
\[
{}_1F_2(\rho;\rho+\sigma,\beta;x)=\dfrac1{B(\rho,\sigma)}\int_0^1t^{\rho-1}(1-t)^{\sigma-1}{}_0F_1(;\beta;xt)\mr{d}t,
\]
we get~\eqref{eq:ex3}. 
\end{proof}

\subsection{Proof for Lemma~\ref{lema:Besov}}\label{apd:Besov}
\begin{proof}
The ``if''-part is standard by~\cite{Triebel:1983}*{\S~2.3.2, Proposition 2; \S~2.7.1, Theorem}. We illustrate the ``only if''-part with an example, which is taken from~\cite{Triebel:1983}*{\S~2.3.9, Proof of Theorem}. 

Let $f_0\in\ms{S}(\mb{R}^d)$ with supp$(\wh{f}_0)\subset\set{x\in\mb{R}^d}{1\le\abs{x}\le 3/2}$. Let $f_n(x)=f_0(2^{-n}x)$ for an integer $n$, then 
	\[
		\wh{f}_n(\xi)=2^{-dn}\wh{f}_0(2^{-n}\xi)\qquad \text{and}\qquad \mr{supp}(\wh{f}_n)\subset\set{x\in\mb{R}^d}{2^n\le\abs{x}\le 3\times 2^{n-1}}.
	\]
	Choose proper $\{\varphi_j\}_{j=0}^{\infty}\subset\ms{S}(\mb{R}^d)$ in the definition of Besov space such that $\varphi_0(x)=1$ when $\abs{x}\le 3/2$ and $\varphi_j=1$ on supp$(\wh{f}_j)$ for $j\ge 1$, then
	\[
		\mr{supp}(\wh{f}_n)\cap\mr{supp}(\varphi_j)=\emptyset\quad\text{if}\quad n\ge 0\quad\text{and}\quad n\neq j.
	\]
	A direct calculation gives that
	\[
		(\varphi_j\wh{f}_n)^\vee=\delta_{0n}f_n,\quad\text{if}\quad n\le 0\qquad\text{and}\qquad(\varphi_j\wh{f}_n)^\vee=\delta_{jn}\wh{f}_n,\quad\text{if}\quad n> 0.
	\]
	By definition, when $n<0$,
	\[
		\nm{f_n}{B_{p,q}^\alpha(\mb{R}^d)}=\nm{f_n}{L^p(\mb{R}^d)}=2^{-dn/p}\nm{f_0}{L^p(\mb{R}^d)}.
	\]
	Let $n\to-\infty$ with the embedding relation $B_{p_1,q_1}^{\alpha_1}(\mb{R}^d)\hookrightarrow B_{p_2,q_2}^{\alpha_2}(\mb{R}^d)$ yields $p_1\le p_2$. Similarly, when $n>0$,
	\[
		\nm{f_n}{B_{p,q}^\alpha(\mb{R}^d)}=2^{\alpha n}\nm{f_n}{L^p(\mb{R}^d)}=2^{(\alpha-d/p)n}\nm{f_0}{L^p(\mb{R}^d)}.
	\]
	Let $n\to+\infty$ with the embedding relation implies $\alpha_1-d/p_1\ge\alpha_2-d/p_2$. Finally if $\alpha_1-d/p_1=\alpha_2-d/p_2$, then $q_1\le q_2$ proved in~\cite{Triebel:1995}*{Theorem 3.2.1}. 
 \end{proof}
\subsection{Proof for~\eqref{eq:ex5}}\label{apd:ex5}
\begin{proof}
If $1\le p<\infty$ and $\alpha>0$, then $B_{p,1}^\alpha(\mb{R}^d)$ is equivalent to the space defined in~\cite{Triebel:1983}*{\S~2.5.7, Theorem}
	\[
		\Lambda_{p,q}^\alpha(\mb{R}^d){:}=\set{f\in W^{[\alpha]}_p(\mb{R}^d)}{\int_{\mb{R}^d}\dfrac{\nm{\Delta_h^2(\nabla^{[\alpha]}f)}{L^p(\mb{R}^d)}^q}{\abs{h}^{d+\{\alpha\}q}}\mr{d}h<\infty}
	\]
	equipped with the norm
	\[
		\nm{f}{\Lambda_{p,q}^\alpha(\mb{R}^d)}{:}=\nm{f}{W^{[\alpha]}_p(\mb{R}^d)}+\Lr{\int_{\mb{R}^d}\dfrac{\nm{\Delta_h^2(\nabla^{[\alpha]}f)}{L^p(\mb{R}^d)}^q}{\abs{h}^{d+\{\alpha\}q}}\mr{d}h}^{1/q}
	\]
    for $1\le q<\infty$, and
    \[
    \Lambda_{p,\infty}^\alpha(\mb{R}^d):=\set{f\in W^{[\alpha]}_p(\mb{R}^d)}{\sup_{h\in\mb{R}^d\backslash\{0\}}|h|^{-\{\alpha\}}\nm{\Delta_h^2(\nabla^{[\alpha]}f)}{L^p(\mb{R}^d)}<\infty}
    \]
    equipped with the norm
    \[
    \nm{f}{\Lambda_{p,\infty}^\alpha(\mb{R}^d)}:=\nm{f}{W_p^{[\alpha]}(\mb{R}^d)}+\sup_{h\in\mb{R}^d\backslash\{0\}}|h|^{-\{\alpha\}}\nm{\Delta_h^2(\nabla^{[\alpha]}f)}{L^p(\mb{R}^d)}.
    \]
	Here $\alpha=[\alpha]+\{\alpha\}$ with integer $[\alpha]$ and $0<\{\alpha\}\le 1$, and $\Delta_h^2f(x)=f(x+2h)-2f(x+h)+f(x)$; see~\cite{Triebel:1983}*{\S~2.2.2, Eq. (9)}. 

For any nonnegative integer $k$, a direct calculation gives
\[
\nabla^k\psi_n(x)=\sum_{j=0}^k\dfrac{c_jx^{\beta_j}}{(1+in)^{d/2+j}}e^{-\pi\abs{x}^2/(1+in)}
\]
	for some constants $\{c_j\}_{j=0}^k$, and the multi-index $\beta_j=(\beta_{j1},\dots,\beta_{jd})$ satisfies $\abs{\beta_j}\le j$ with $x^{\beta_j}=x_1^{\beta_{j1}}\dots x_d^{\beta_{jd}}$. Then
	\[
		\nm{\nabla^k\psi_n}{L^p(\mb{R}^d)}\le C\sum_{j=0}^k\dfrac1{(1+n^2)^{d/4+j/2}}\nm{\abs{x}^{\abs{\beta_j}}e^{-\pi\abs{x}^2/(1+n^2)}}{L^p(\mb{R}^d)}.
	\]
A direct calculation gives
\begin{align*}
		\int_{\mb{R}^d}\abs{x}^{\abs{\beta_j}p}e^{-\pi p\abs{x}^2/(1+n^2)}\mr{d}x=&(1+n^2)^{(d+\abs{\beta_j}p)/2}\int_{\mb{R}^d}\abs{y}^{\abs{\beta_j}p}e^{-\pi p\abs{y}^2}\mr{d}y\\
		=&\dfrac{\omega_{d-1}\Gamma\Lr{(\abs{\beta_j}p+d)/2}}{2(p\pi)^{(\abs{\beta_j}p+d)/2}}
		(1+n^2)^{(d+\abs{\beta_j}p)/2}.
	\end{align*}
Therefore, there exists $C$ depending only on $d,p,k$ such that
\begin{equation}\label{eq:ex4}
		\nm{\nabla^k\psi_n}{L^p(\mb{R}^d)}\le C\sum_{j=0}^k\dfrac{(1+n^2)^{d/(2p)+\abs{\beta_j}/2}}{(1+n^2)^{d/4+j/2}}\le C(1+n^2)^{-d(p-2)/(4p)}.
\end{equation}

If $f\in W^2_p(\mb{R}^d)$, then 
\[
		\Delta_h^2f(x)=\int_0^1\mr{d}t\int_t^{1+t}\nabla^2f(x+sh)\mr{d}sh\cdot h=\int_0^2\nabla^2f(x+sh)\mr{d}s\int_{\max(s-1,0)}^{\min(s,1)}\mr{d}th\cdot h.
\]
Therefore,
\[
\abs{\Delta_h^2f(x)}\le\abs{h}^2\int_0^2\abs{\nabla^2f(x+sh)}\mr{d}s.
\]
By the Minkowski's inequality, we obtain
	\[
		\nm{\Delta_h^2f(x)}{L^p(\mb{R}^d)}\le\abs{h}^2\int_0^2\nm{\nabla^2f(\cdot+sh)}{L^p(\mb{R}^d)}\mr{d}s=2\abs{h}^2\nm{\nabla^2f}{L^p(\mb{R}^d)}.
\]
Splitting the integral part of the $\Lambda_{p,q}^\alpha$-norm into two parts, we get
	\begin{align*}
		&\int_{\abs{h}<1}\dfrac{\nm{\Delta_h^2f}{L^p(\mb{R}^d)}^q}{\abs{h}^{d+\{\alpha\}q}}\mr{d}h+\int_{\abs{h}>1}\dfrac{\nm{\Delta_h^2f}{L^p(\mb{R}^d)}^q}{\abs{h}^{d+\{\alpha\}q}}\mr{d}h\\
		\le&2^q\nm{\nabla^2f}{L^p(\mb{R}^d)}^q\int_{\abs{h}<1}h^{(2-\{\alpha\})q-d}\mr{d}h+4^q\nm{f}{L^p(\mb{R}^d)}^q\int_{\abs{h}>1}h^{-d-\{\alpha\}q}\mr{d}h\\
		=&2^q\omega_{d-1}\Lr{\dfrac{\nm{\nabla^2f}{L^p(\mb{R}^d)}^q}{(2-\{\alpha\})q}+\dfrac{2^q\nm{f}{L^p(\mb{R}^d)}^q}{\{\alpha\}q}}\\
        \le&\dfrac{2^{2q}\omega_{d-1}}{\{\alpha\}q}\nm{f}{W^2_p(\mb{R}^d)}^q.
	\end{align*}
In the same manner we can see that when $q=\infty$,
\begin{align*}
&\sup_{h\in\mb R^d\backslash\{0\}}\abs{h}^{-\{\alpha\}}\nm{\Delta_h^2(\nabla^{[\alpha]}f)}{L^p(\mb{R}^d)}\\
\le&\sup_{0<\abs{h}<1}\abs{h}^{-\{\alpha\}}\nm{\Delta_h^2(\nabla^{[\alpha]}f)}{L^p(\mb{R}^d)}+\sup_{\abs{h}\ge 1}\abs{h}^{-\{\alpha\}}\nm{\Delta_h^2(\nabla^{[\alpha]}f)}{L^p(\mb{R}^d)}\\
\le&2\nm{\nabla^2f}{L^p(\mb R^d)}\sup_{0<\abs{h}<1}\abs{h}^{2-\{\alpha\}}+4\nm{f}{L^p(\mb R^d)}\sup_{\abs{h}\ge 1}\abs{h}^{-\alpha}\\
\le&4\nm{f}{W^2_p(\mb R^d)}.
\end{align*}
Note that $\nabla^{[\alpha]}\psi_n\in W^2_p(\mb{R}^d)$, a combination of the above inequality and~\eqref{eq:ex4} yields
\[
\nm{\psi_n}{\Lambda_{p,q}^\alpha(\mb{R}^d)}
\le C\nm{\psi_n}{W^{[\al]+2}_p(\R^d)}\le C(1+n^2)^{-d(p-2)/(4p)},
\]
where $C$ is a constant depending on $p,\alpha$ and $d$ but independent of $n$. So does $\nm{\psi_n}{B_{p,q}^\alpha(\mb{R}^d)}$.
\end{proof}

\subsection{Proof for Lemma~\ref{lema:comp}}~\label{apd:comp}
\begin{proof}
	Firstly we show that $g_{,n}$ is symmetric about $x=1/2$. By definition,
	\[
		g_{,n}(1-t)=g(n(1-t)-j)=g(nt-n+j+1)=g(nt-k)=g_{,n}(t)
	\]
for some integers $j,k$ satisfying $0\le j,k\le n-1$ and $k=n-j-1$.

For a fixed $t\in[0,1]$, there exist integers $j,k$ satisfying $0\le j\le 2n_1-1,0\le k\le n_2-1$ such that $0\le 2n_1n_2t-n_2j-k\le 1$, then $0\le n_2(2n_1t-j)-k\le 1$ and
	\[
		g_{,2n_1n_2}(t)=g(2n_1n_2t-n_2j-k)=g(n_2(2n_1t-j)-k)=g_{,n_2}(2n_1t-j).
	\]
	By definition,
	\[
		\beta_{,n}(t)=\begin{cases}
			2nt-2j,& 0\le nt-j\le 1/2,\\
			2+2j-2nt, & 1/2\le nt-j\le 1,
		\end{cases}\quad j=0,\dots,n-1.
	\]
	If $j$ is even, then $j=2l$ for some integer $l$ satisfying $0\le l\le n_1-1$ and $0\le n_1t-l\le1/2$. Therefore
	\[
		g_{,n_2}(\beta_{,n_1}(t))=g_{,n_2}(2n_1t-2l)=g_{,n_2}(2n_1t-j).
	\]
	Otherwise $j$ is odd, then $j=2l+1$ for some integer $l$ satisfying $0\le l\le n_1-1$ and $1/2\le n_1t-l\le 1$. Therefore
	\[
		g_{,n_2}(\beta_{,n_1}(t))=g_{,n_2}(2+2l-2n_1t)=g_{,n_2}(1+j-2n_1t)=g_{,n_2}(2n_1t-j).
	\]
	This gives $g_{,n_2}\circ\beta_{,n_1}=g_{,2n_1n_2}$ on $[0,1]$.
\end{proof}

\subsection{Proof for Theorem~\ref{thm:lower}}~\label{apd:lower}
\begin{lemma}\label{lema:decay}
Given $n,R>0$ and let $f(x)=\cos(2\pi nx_1)e^{-\pi\abs{x}^2/R}$, then $f\in\ms{B}^s(\mb{R}^d)$ with
	\begin{equation}\label{eq:decay}
		\upsilon_{f,s,\Omega}\le\Lr{n+\dfrac{d}{\pi\sqrt{R}}}^s\qquad\text{for}\quad 0\le s\le 1.
	\end{equation}
\end{lemma}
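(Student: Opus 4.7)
The plan is to compute the Fourier transform of $f$ explicitly and then bound the resulting weighted integral using concavity. Since $f$ is already defined on all of $\mathbb{R}^d$, the infimum in the definition of $\upsilon_{f,s,\Omega}$ can be bounded by taking the trivial extension $Ef=f$, which gives $\upsilon_{f,s,\Omega}\le\int_{\mathbb{R}^d}|\xi|_1^s|\wh{f}(\xi)|\mr{d}\xi$.

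First I would compute $\wh{f}$. Writing $\cos(2\pi nx_1)=(e^{2\pi inx_1}+e^{-2\pi inx_1})/2$ and using the standard formula $\wh{e^{-\pi|x|^2/R}}(\xi)=R^{d/2}e^{-\pi R|\xi|^2}$ together with the modulation rule, we obtain
\[
\wh{f}(\xi)=\dfrac{R^{d/2}}{2}\lr{e^{-\pi R(|\xi_1-n|^2+\xi_2^2+\cdots+\xi_d^2)}+e^{-\pi R(|\xi_1+n|^2+\xi_2^2+\cdots+\xi_d^2)}}.
\]
In particular $\wh f\ge 0$. By the symmetry $\xi_1\mapsto-\xi_1$ the two terms contribute equally, so after shifting $\xi_1\mapsto\xi_1+n$ the spectral integral becomes
\[
\int_{\mathbb R^d}|\xi|_1^s|\wh f(\xi)|\mr{d}\xi
=\int_{\mathbb R^d}\bigl(|\xi_1+n|+|\xi_2|+\cdots+|\xi_d|\bigr)^s\,R^{d/2}e^{-\pi R|\xi|^2}\mr{d}\xi.
\]

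The measure $\mr{d}\mu:=R^{d/2}e^{-\pi R|\xi|^2}\mr{d}\xi$ is a probability measure on $\mathbb R^d$. Since $0\le s\le 1$, the map $t\mapsto t^s$ is concave, so Jensen's inequality yields
\[
\int_{\mathbb R^d}\bigl(|\xi_1+n|+|\xi_2|+\cdots+|\xi_d|\bigr)^s\mr{d}\mu\le\lr{\int_{\mathbb R^d}\bigl(|\xi_1+n|+|\xi_2|+\cdots+|\xi_d|\bigr)\mr{d}\mu}^s.
\]
By the triangle inequality, $\int|\xi_1+n|\mr{d}\mu\le n+\int|\xi_1|\mr{d}\mu$. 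A direct Gaussian moment computation, with the substitution $\xi=u/\sqrt R$, gives
\[
\int_{\mathbb R^d}|\xi_i|\mr{d}\mu=\dfrac1{\sqrt R}\int_{\mathbb R^d}|u_i|e^{-\pi|u|^2}\mr{d}u=\dfrac1{\pi\sqrt R}
\]
for every $i=1,\dots,d$. Summing over the $d$ coordinates gives $n+d/(\pi\sqrt R)$ as the bound on the mean, and raising to the $s$-th power proves~\eqref{eq:decay}.

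The proof is essentially routine; there is no serious obstacle, since the Fourier transform is explicit and all the bounds reduce to concavity of $t^s$ on $[0,\infty)$ and the first absolute moment of a one-dimensional Gaussian. The only point that requires a small amount of care is recognising that the concavity step needs $s\le 1$ (for $s>1$ one would have to resort to a direct estimate on $(|\xi_1+n|+|\xi_2|+\cdots+|\xi_d|)^s$ without Jensen's inequality), which is precisely the range in the statement.
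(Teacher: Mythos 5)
Your proposal is correct and follows essentially the same route as the paper's: compute $\wh f$ explicitly, observe $\wh f\ge 0$ with unit total mass, bound the first $|\cdot|_1$-moment of $\wh f\,\mr{d}\xi$ by $n+d/(\pi\sqrt R)$, and interpolate to order $s$. The only cosmetic difference is that you apply Jensen's inequality directly to the probability measure, whereas the paper first records $\upsilon_{f,0,\Omega}=1$ and $\upsilon_{f,1,\Omega}\le n+d/(\pi\sqrt R)$ and then invokes the moment inequality~\eqref{eq:inter2}; when $\upsilon_{f,0,\Omega}=1$ these are the same step.
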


\begin{proof}
For any $R>0,$ the Fourier transform of the dilated Guass function $e^{-\pi x_j^2/R}$ reads as
\[
\widehat{e^{-\pi x_j^2/R}}=\sqrt{R}e^{-\pi R\xi_j^2}.
\]
A direct calculation gives 
\begin{align*}
\wh{f}(\xi)=&\dfrac12\int_{\mb{R}}e^{-\pi x_1^2/R}\lr{e^{-2\pi ix_1(\xi_1-n)}+e^{-2\pi ix_1(\xi_1+n)}}\mr{d}x_1\prod_{j=2}^d\int_\mb{R}e^{-\pi x_j^2/R-2\pi ix_j\xi_j}\mr{d}x_j\\
=&\dfrac{R^{d/2}}2\lr{e^{-\pi R(\xi_1-n)^2}+e^{-\pi R(\xi_1+n)^2}}\prod_{j=2}^de^{-\pi R\xi_j^2}\\
=&R^{d/2}e^{-\pi R(\abs{\xi}^2+n^2)}\cosh(2\pi nR\xi_1).
\end{align*}
It is clear that $f,\wh{f}\in L^1(\R^d)$ and the pointwise Fourier inversion theorem holds true, and
\[
\upsilon_{f,0,\Omega}=\int_{\R^d}\abs{\wh{f}(\xi)}\md\xi=\int_{\R^d}\wh{f}(\xi)\md\xi=f(0)=1,
\]
where we have used the positiveness of $\wh{f}$ .

Next, using the elementary identities
\[
\sqrt{R}\int_{\mb{R}}e^{-\pi R\xi_j^2}\mr{d}\xi_j=1\quad\text{and}\quad \sqrt{R}\int_{\mb{R}}\abs{\xi_j}e^{-\pi R\xi_j^2}\mr{d}\xi_j=\dfrac1{\pi\sqrt{R}},
\]
we obtain
\begin{align*}
\upsilon_{f,1,\Omega}&=R^{d/2}\int_{\mb{R}}\abs{\xi_1}e^{-\pi R(\xi_1-n)^2}\mr{d}\xi_1\prod_{j=2}^d\int_{\mb{R}}e^{-\pi R\xi_j^2}\mr{d}\xi_j\\
		&\quad+R^{d/2}\int_{\mb{R}}e^{-\pi R(\xi_1-n)^2}\mr{d}\xi_1\sum_{j=2}^d\int_{\mb{R}}\abs{\xi_j}e^{-\pi R\xi_j^2}\mr{d}\xi_j\prod_{k\neq1,j}\int_{\mb{R}}e^{-\pi R\xi_k^2}\mr{d}\xi_k\\
		&\le\sqrt{R}\int_{\mb{R}}(\abs{\xi_1-n}+n)e^{-\pi R(\xi_1-n)^2}\mr{d}\xi_1+\dfrac{d-1}{\pi\sqrt{R}}\\
&=n+\dfrac{d}{\pi\sqrt{R}}.
\end{align*}
	
Using the interpolation inequality~\eqref{eq:inter2}, we obtain~\eqref{eq:decay}.
\end{proof}
\begin{proof}[Proof for Theorem~\ref{thm:lower}]
	Define $n=2^{L+2}N^L$ and $f(x)=n^{-s}\cos(2\pi nx_1)e^{-\pi\abs{x}^2/R}$ with large enough $R$ such that $\upsilon_{f,s,\Omega}\le 1+\varepsilon$ by Lemma~\ref{lema:decay} and $e^{-\pi\abs{x}^2/R}\ge 1-\varepsilon$ when $x\in\Omega$. 
	Fix $x_2,\dots,x_d$, then any $(L,N)$-network $f_N$ can be viewed as an one-dimensional $(L,N)$-network, i.e. $f_N(\cdot,x_2,\dots,x_d):[0,1]\to\mb{C}$. Divide $[0,1]$ into $n$-internals of $[j/n,(j+1)/n]$ with $j=0,\dots,n-1$. There exists at least $n-2^{L+1}N^L=2^{L+1}N^L$ intervals such that $f_N$ does not change sign on those intervals~\cite{Telgarsky:2016}*{Lemma 3.2}. Without loss of generality, we assume $f_N(\cdot,x_2,\dots,x_d)\ge 0$ on some interval $[j/n,(j+1)/n]$, then
	\[
		\int_{j/n}^{(j+1)/n}(f(x)-f_N(x))^2\mr{d}x_1\ge\dfrac{(1-\varepsilon)^2}{n^{2s}}\int_{(4j+1)/(4n)}^{(4j+3)/(4n)}\cos^2(2\pi nx_1)\mr{d}x_1\ge\dfrac{(1-\varepsilon)^2}{4n^{2s+1}},
	\]
because $\cos(2\pi nx_1)\le 0$ when $2\pi j+\pi/2\le 2\pi nx_1\le 2\pi j+3\pi/2$.

Summing up these $n-2^{L+1}N^L$ intervals gives
	\begin{align*}
		\nm{f-f_N}{L^2(\Omega)}^2\ge&\int_{[0,1]^{d-1}}\mr{d}x_2\dots\mr{d}x_d\int_0^1(f(x)-f_N(x))^2\mr{d}x_1\\
		\ge&\dfrac{2^{L+1}N^L(1-\varepsilon)^2}{4n^{2s+1}}\ge\dfrac{(1-\varepsilon)^2}{2^{2sL+4s+3}N^{2sL}}\ge\dfrac{(1-\varepsilon)^2}{64N^{2sL}}.
	\end{align*}
Simultaneously squaring off both sides of the inequality, we obtain
\[\nm{f-f_N}{L^2(\Omega)}\ge\dfrac{1-\varepsilon}{8N^{sL}}.\]
\end{proof}
\end{document}